\newcommand{\R}{\mathbb{R}}
\newtheorem{theorem}{Theorem}
\newtheorem{remark}[theorem]{Remark}
\newtheorem{definition}[theorem]{Definition}
\newtheorem{lemma}[theorem]{Lemma}
\newtheorem{example}[theorem]{Example}
\newenvironment{customthm}[1]
  {\innercustomthm}
  {\endinnercustomthm}
\numberwithin{theorem}{section}
\numberwithin{equation}{section}
\DeclareMathOperator{\suppp}{supp \,}
\DeclareMathOperator{\var}{var}
\DeclareMathOperator{\real}{real}
\DeclareMathOperator{\dist}{dist\,}
\newcommand{\norm}[2]{\left\|#1\right\|_{#2}}
\renewcommand{\i}{\mathrm{i}}
\title{Stable Phase Retrieval in Infinite Dimensions}
\author{Rima Alaifari, Ingrid Daubechies, Philipp Grohs, Rujie Yin}
\date{}
\begin{document}

\maketitle

\begin{abstract}
The problem of phase retrieval is to determine a signal $f\in \mathcal{H}$, with $
\mathcal{H}$ a Hilbert space, from intensity measurements
$|F(\omega)|$, where $F(\omega):=\langle f , \varphi_\omega\rangle$ are measurements of $f$ with respect to a measurement system $(\varphi_\omega)_{\omega\in \Omega}\subset \mathcal{H}$. 

Although phase retrieval is always stable in the finite dimensional setting whenever it is possible (i.e. injectivity implies stability for the inverse problem), the situation is drastically different if $\mathcal{H}$ is infinite-dimensional: in that case phase retrieval is never uniformly stable \cite{cahill2016phase,grohsstab}; moreover the stability deteriorates severely in the dimension of the problem \cite{cahill2016phase}. 

On the other hand, all empirically observed instabilities are of a certain type: they occur whenever the function $|F|$ of intensity measurements is concentrated on disjoint sets $D_j\subset \Omega$, i.e., when $F= \sum_{j=1}^k F_j$ where each $F_j$ is concentrated on $D_j$ (and $k \geq 2$). 

Motivated by these considerations we  propose a new paradigm for stable phase retrieval by considering the problem of reconstructing $F$ up to a phase factor that is not global, but that can be different for each of the subsets $D_j$, i.e., recovering $F$ up to the equivalence
$$
	F \sim \sum_{j=1}^k e^{\i \alpha_j} F_j.
$$

We present concrete applications (for example in audio processing) where this new notion of stability is natural and meaningful and
show that in this setting stable phase retrieval can actually be achieved, for instance if the measurement system is a Gabor frame or a frame of Cauchy wavelets. 
\end{abstract}
\section{Introduction}
\subsection{Problem Formulation}
Suppose we are given a complex-valued function $F:\Omega\to \mathbb{C}$
on some (discrete or continuous) domain $\Omega$,
and we can observe only its absolute values $|F|$. 
The problem of phase retrieval is to reconstruct $F$ from 
these measurements, up to a global phase (meaning that the functions 
$F$ and $e^{\i \alpha}F$, $\alpha \in \mathbb{R}$, are not distinguished). 

Such problems are encountered in a wide variety of applications,
ranging from X-ray crystallography and microscopy to audio processing and 
deep learning algorithms \cite{shechtman2015phase,deller1993discrete,hurt2001phase,waldspurger2015these}; accordingly, 
a large body of literature treating the mathematical and algorithmic solution of phase retrieval problems exists, with new approaches emerging in recent years \cite{waldspurger2015phase,candes2015phase,balan2006signal}.

In these applications,
 the domain of definition $\Omega$ is a finite
 set, for example $\Omega = \{1,\dots , N\}$ and the function $F$
 arises from a finite number of linear measurements
 $$
 	F(k)=\langle x, a_k\rangle := \sum_{l=1}^d x_l\,\overline{(a_k)_l},\quad 
	a_k \in \mathbb{C}^d,\ k=1,\dots , N
 $$
 of some signal $x\in \mathbb{C}^d$ which one seeks to recover.  
 Such problems arise as finite approximations to various real-world problems; in diffraction imaging, for instance,
 the setup can be interpreted as measuring the diffraction 
 pattern of $x$, modulated with a number of different filters.
 
 Classically, the numerical solution of phase retrieval problems
 is treated via alternating projection algorithms that are simple to 
 implement but lack a theoretical understanding \cite{fienup1982phase,gerchberg1972practical}.
 More recent work \cite{candes2015phase} has introduced an algorithm named PhaseLift, based on a reformulation of the N-dimensional phase retrieval problem as a semidefinite optimization problem in an $N^2-$dimensional space.  
As shown in \cite{candes2015phase}, PhaseLift succeeds with high probability in recovering the signal $x$, up to a global phase, in a randomized setting (meaning that the vectors $a_1,\dots , a_N$ are drawn at random);
 moreover, PhaseLift is stable if the measurements $|\langle x, a_n\rangle|$ are corrupted by additive noise. 
More recently it has been shown that gradient descent algorithms, together with a careful guess for their starting value, achieve the same theoretical guarantees while being vastly more efficient \cite{candes2015phasewirt}.
 
 \subsection{Infinite-Dimensional Phase Retrieval}
 The vector $x\in \mathbb{C}^d$ typically arises as a digital representation of a physical quantity.
 For instance, $x$ could represent a finite-dimensional approximation of a continuous function describing an infinite-dimensional object.
 This naturally leads one to consider the more general infinite-dimensional phase retrieval problem, where one seeks to recover a 
 signal $f\in \mathcal{H}$, with $\mathcal{H}$ a (possibly infinite-dimensional) Hilbert space,  
 from the phaseless measurements  $|F(\omega)|$, with
 \begin{equation}\label{eq:phaseret}
 	F(\omega):=\langle f, \varphi_\omega\rangle,\quad \omega\in \Omega,
 \end{equation}
 and where $(\varphi_\omega)_{\omega\in \Omega} \subset \mathcal{H}$ is a (possibly infinite) parameterized family of measurement functions, typically normalized so that $\|\varphi_\omega\|=1$ for all $\omega \in \Omega$.
 
 We mention a few examples. 
 \begin{itemize}
 \item
Consider the classical $n$-dimensional phase retrieval problem of reconstructing a function $f$ from intensity measurements of its Fourier transform $\widehat{f}$.
For a compact subset $D\subset \mathbb{R}^n$
 let $\mathcal{H}=L^2(D)$ and consider $f \in \mathcal{H}$. Let $F(\omega)= \widehat f(\omega)$, $\omega \in \Omega$, 
where 
 $\Omega$ is either all of $\mathbb{R}^n$ or a suitable discrete subset of $\mathbb{R}^n$ (since $f$ has compact support, there exist $\varphi_\omega \in \mathcal{H}$ such that $F(\omega)=\langle f,\varphi_\omega\rangle$).   
Applications of this setup include coherent diffraction imaging, X-ray crystallography and many more, in which one typically can measure only intensities, corresponding to $|\langle f,\varphi_\omega\rangle|^2$. 
The classical phase retrieval problem is in general not uniquely solvable \cite{akutowicz1957determination}; recent work
\cite{pohl2014phaseless} has established the uniqueness of the solution, if the intensities of the Fourier transforms of certain structured modulations of $f$ are measured instead.

\item Related to the previous example, the work \cite{thakur2011reconstruction} studies the reconstruction 
of a bandlimited \emph{real-valued} function $f$ from unsigned
samples $(|f(\omega)|)_{\omega\in \Omega}$ with $\Omega$ a suitable (discrete) sampling set; more general settings are
considered in \cite{alaifari2016reconstructing,chen2016phase}. 
Note that the real-valued case (where only the sign $\pm 1$ is missing
from each measurement) is qualitatively simpler than the complex-valued
case where each measurement lacks a phase factor $e^{\i\alpha}$, $\alpha \in \mathbb{R}$.

\item In order to overcome the problem of nonuniqueness of the classical phase retrieval problem and to be able to apply techniques in diffraction imaging also to extended objects, one often records local illuminations of different overlapping parts
of the object, which mathematically amounts to a windowed (or short-time) Fourier transform (STFT)
 $
 F = V_g f,
 $
 where for $f\in L^2(\mathbb{R})$
 \begin{equation}\label{eq:wft}
 V_g f(x,y) :=\int_{\mathbb{R}}f(t)\overline{g(t-x)}e^{-2\pi \i t y}dt
 \end{equation} is defined by the window $g\in L^2(\mathbb{R})$ and the parameters $(x,y)$ may vary over a discrete or continuous
 subset of $\mathbb{R}^2$. See \cite{shechtman2015phase} for an excellent survey on phase retrieval from STFT measurements.

\item Another instance of phase retrieval from STFT measurements arises in audio processing applications involving phase vocoders. A phase vocoder \cite{flanagan1966phase} is a tool that allows to modify an audio signal $f$ by transforming its STFT.
Given $f$, a phase vocoder first calculates $V_g f(x,y)$ and then modifies it to some $H(x,y)$ before it transforms back to the time domain by taking the inverse (discrete) STFT of $H$. Typical modifications include time-scaling and pitch shifting. In general, the modified $H$ may not result in an STFT of \emph{any} signal. This leads to the so-called \emph{phase coherence} problem \cite{laroche1999improved} in which one aims to make modifications such that the modified $H$ is an approximate STFT. One possible approach is to modify the amplitude $|F(x,y)|$ only in a first step to obtain $|H(x,y)|$ and to then recover the phase of $H(x,y)$ in a coherent way.

  \item More recent work \cite{waldspurger2015these}
  seeks to reconstruct  a signal $f\in L^2(\mathbb{R})$ from the magnitudes $|F(x,2^j)|$ of semidiscrete
  wavelet measurements, where 
  $
  F(x,2^j) = |w_\psi f(x, 2^j)|,
  $
  with $j\in \mathbb{N}$, $x\in \mathbb{R}$ 
  and $w_\psi f(x,y):=\int_{\mathbb{R}}f(t){|y|^{1/2}}\overline{\psi(y(t-x))}dt$ \footnote{Note that our $w_\psi f(x,y)$ corresponds to $W_\psi f(x,1/y)$ in the notation of \cite{waldspurger2015these}.}; the collection of these magnitudes is sometimes called the \emph{scalogram}.  
  The corresponding phase retrieval problem arises in e.g. the reconstruction of $f$
  from the output of its so-called scattering transform as defined
  in \cite{mallat2012group}.
  \end{itemize}
  
  In all these examples it is extremely challenging to establish  
  whether $f$ is uniquely determined, up to a global phase; 
  the problem is still not well understood except in special cases.
  
  \subsection{(In-)stability of (In-)finite Dimensional Phase Retrieval}\label{sec:instab}
  Even if the uniqueness problem was completely solved, this would however not yet be sufficient for applications.
  Since physical measurements are always corrupted by noise and/or
  uncertainties and numerical algorithms always introduce rounding errors,
  solving a real-world phase retrieval problem mandates a
  reconstruction that is stable, meaning that there should
  exist a (moderate) constant $C>0$ such that
  \begin{equation}\label{eq:oldspab}
  	\inf_{\alpha\in \mathbb{R}}\norm{F - e^{\i\alpha}G}{\mathcal{B}}\le C\norm{|F|-|G|}{ \mathcal{B}'},
  \end{equation}
  for $\mathcal{B}$, $\mathcal{B}'$ suitable Hilbert (or Banach) spaces.
  
  For phase retrieval problems in spaces of finite (and fixed) dimensions, stability and uniqueness typically go hand in hand \cite{bandeira2014saving,candes2015phase}. The situation changes drastically when we consider infinite-dimensional spaces. A central finding of \cite{grohsstab, cahill2016phase} is that
  \emph{all infinite-dimensional phase retrieval problems are 
  unstable} and that \emph{the stability of finite-dimensional phase retrieval problems deteriorates severely as the dimension grows}.
  
  \begin{example}[Stability deterioration as the dimension grows]\label{ex:cahill}
  We borrow the following example from recent work \emph{\cite{cahill2016phase}} to which we refer for more detail.
  Consider the real-valued Paley-Wiener space
$$
	PW = \{ f \in L^2(\mathbb{R},\mathbb{R}): \suppp \widehat{f} \subseteq [-\pi,\pi]\},
$$
and the 
measurement vectors $\{ \varphi_n\}_{n \in \mathbb{Z}}$ of elements
$\varphi_n := \mbox{sinc}( \cdot - \frac{n}{4})$. 
As shown in \emph{\cite{thakur2011reconstruction}}, each $f\in PW$ is uniquely determined by $\{ |\langle f, \varphi_n\rangle| \}_{n \in \mathbb{Z}}$, 
 up to a global sign $\pm 1$ (note that this setup is real-valued). More precisely, suppose
that $f,g\in PW$ with $|\langle f , \varphi_n\rangle | = |\langle g , \varphi_n\rangle |$ for all $n\in \mathbb{Z}$. 
Then there exists $\sigma \in \{-1,1\}$ with $f=\sigma \cdot g$.

Now we consider an approximate problem restricted
to the finite-dimensional subspaces $V_n \subset PW$, defined as 
$$
	V_n := \text{span } \{ \varphi_{4 \ell}: \ell \in [-n,n]\}.
$$
The space $V_n$ consists of $f\in PW$ for which $\widehat{f}$ is the restriction to $[-\pi,\pi]$ of a trigonometric polynomial of degree $n$.
Then, \emph{\cite{cahill2016phase}} gives the explicit construction of $f_m,\,g_m\in V_{2m}$ such that, for some $m-$independent constant $c>0$,
\begin{equation}\label{eq:cahillex}
	\min_{\tau \in \{ \pm 1\}} \norm{f_m - \tau g_m }{L^2(\mathbb{R})} > c (m+1)^{-1} 2^{3m} \norm{ (|\langle f, \varphi_n\rangle|-|\langle g, \varphi_n\rangle|)_{n \in \mathbb{Z}}}{\ell^2(\mathbb{Z}) },\quad {\forall m \in\mathbb{N}.}
\end{equation}
Comparing this with \eqref{eq:oldspab}, we find that the corresponding Lipschitz constant $C$ thus decays at least exponentially fast as the dimension of the problem grows.
  \end{example}
\begin{figure}[h!]
	\centering
	\begin{minipage}[c]{.45\textwidth}
	\includegraphics[width = \textwidth]{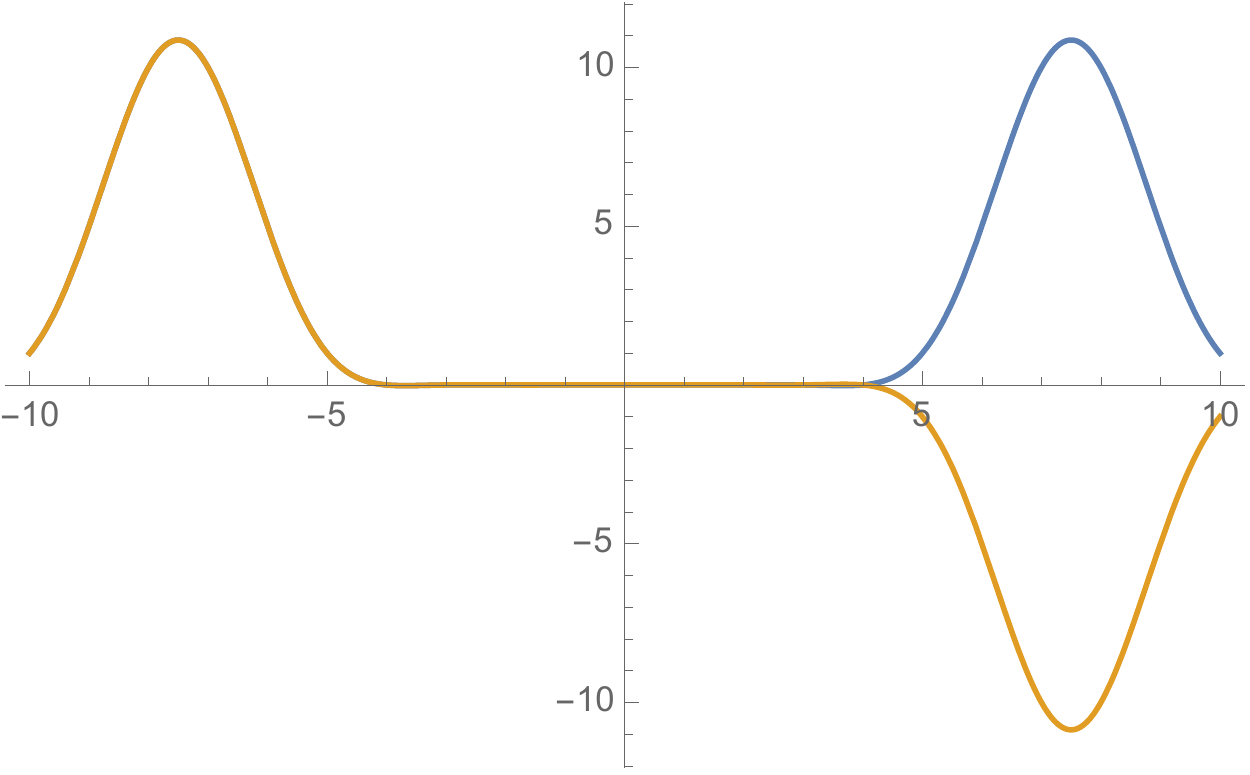}
	\end{minipage}
	\hfill
	\begin{minipage}[c]{.5\textwidth}
	\caption{Functions $f_5$ and $g_5$ satisfying (\ref{eq:cahillex}) and with 
	$\suppp\widehat{f}_5 = \suppp \widehat{g}_5 = [-\pi,\pi]$. While $\sup_{n\in\mathbb{Z}}\left||f_5(\frac{n}{4})| -|g_5(\frac{n}{4})|\right|$ is small, $\| f_5- g_5\|_{L^2(\mathbb{R})}$ and $\|f_5+g_5\|_{L^2(\mathbb{R})}$ are not.}
	\label{fig:cahill}
	\end{minipage}
\end{figure}

Figure \ref{fig:cahill} shows the plot of the functions $f_n$ and
$g_n$ for $n=5$, illustrating that the two functions
have almost identical absolute value despite being significantly different from each other. Note that the two functions in this example are large on two distant domains and small in between. It turns out, as shown in \cite{grohsstab}, that in the real-valued setting this is the 
generic form of instabilities: similar pairs of functions can
be constructed in much more general settings, in particular for all previously mentioned examples of phase retrieval problems. Consequently,
\emph{stable phase retrieval is not possible for infinite-dimensional problems, 
or even for their fine-grained (and thus finite- but high-dimensional) approximations}.

\subsection{Three Observations and A New Paradigm}\label{sec:new}
The instability for infinite-dimensional phase retrieval problems and for their high-resolution approximations
makes one wonder whether phase retrieval is even advisable in these situations.
It is instructive, however, to take a closer look at how this instability manifests itself in concrete phase retrieval attempts. We offer the following three observations.

\begin{enumerate}
\item 
One way to construct phase retrieval problems leading to instabilities is to consider 
functions $F= \sum_{j=1}^k F_j$
with $F_j$ concentrated on disjoint sets $D_j$ that are far apart from each other. In the sequel, we will occasionally refer to functions of this form as multi-component functions.
Clearly, any function of the form
\begin{equation}\label{eq:bulkphase}
	G:=\sum_{j=1}^k e^{\i \alpha_j}F_j
\end{equation}
for any $\alpha_1,\dots , \alpha_k\in \mathbb{R}$,
will result in an instability: the absolute values of $F,G$ will be very close, 
due to the fact that the $F_j$'s are concentrated on well-separated disjoint sets, but $F-e^{\i \gamma} G$ need not be small at all, even for the optimal choice of $\gamma$.

The functions constructed in Example \ref{ex:cahill} are of this form with $k=2$. In fact, in the general real-valued case it can be shown that \emph{all} instabilities arise
in this way \cite{grohsstab}. In the complex case, it is not known whether this is the case as well.

\item 
One can investigate how existing concrete phase retrieval algorithms deal with finite-dimensional approximations to the multi-component $F$ introduced above, under item 1.
Figure \ref{fig:walds} gives a typical albeit simplistic example. Consider an analytic\footnote{i.e. $\widehat{f}(\omega)  = 0, \forall \omega < 0$,} signal $f$, e.g. as in Figure \ref{fig: walds-1},  whose Gabor transform $F = V_\varphi f$ (as in Definition \ref{eq:wft} with $\varphi = e^{-\pi t^2}$) has two disconnected components $F_1,\, F_2$, s.t. $F= F_1 + F_2$, see Figure \ref{fig: walds-2}. 
Given the Gabor transform measurements $|F| = |V_{\varphi} f|$, a reconstruction $f^{rec}$ is obtained using the phase retrieval algorithm in \cite{waldspurger2015these}, and the corresponding code from \protect\url{http://www.di.ens.fr/~waldspurger/wavelets_phase_retrieval.html}\footnote{The original algorithm works on magnitude measurements of wavelet transforms such as Morlet wavelets and Cauchy wavelets. Here we apply it to dyadic Gabor wavelet, where the phenomenon of phase difference between the initial and reconstructed signal persists.}. The relative error $\Vert f-f^{rec}\Vert/\Vert f\Vert$ in time domain is 8.61$\times 10^{-1}$ whereas the relative error $\Vert |F|-|F^{rec}|\Vert / \| F\|$ in the Gabor transform measurements is 1.27$\times 10^{-5}$. The large difference in the time domain (the ratio of the relative errors exceeds $5\times 10^{4}$; see also Figure \ref{fig: walds-3}), is due to a non-uniform but piecewise constant phase shift in the time-frequency domain. 
Let $F_1^{rec},\, F_2^{rec}$ be the two components of $F^{rec}$ corresponding to $F_1,\,F_2$. As shown in Figure \ref{fig: walds-4}, $F_1$ and $F_1^{rec}$ differ by only a phase factor $e^{\i\alpha_1}$; similarly $F_2$ and $F_2^{rec}$ differ by $e^{\i\alpha_2}$; however, $\alpha_1\neq \alpha_2$. 
So although it is hopeless to expect that any numerical algorithm could stably distinguish such a multi-component function from $\sum_{j=1}^k e^{\i \alpha_j}F_j$, algorithmic reconstruction \emph{up to the equivalence $\sum_{j=1}^k F_j\sim \sum_{j=1}^k e^{\i \alpha_j}F_j$} seems to work quite well.

\item 
Being able to reconstruct (if this is indeed feasible) multi-component functions of the type $\sum_{j=1}^k F_j$ up to the equivalence $\sum_{j=1}^k F_j \sim \sum_{j=1}^k e^{\i \alpha_j} F_j$ is of interest only if this equivalence is itself meaningful.

Our third observation is that this is indeed the case for some applications. We list two examples here.\\
Our first example is concerned with coherent diffraction imaging. Measurements
of X-ray diffraction intensities  by complicated objects allow reconstruction of the object under 
certain constraints on the object; see, e.g. \cite{klibanov1986inverse} 
for a mathematical uniqueness
result, or \cite{marchesini2003x} 
for an algorithm effective for fine-grained reconstruction on physical 
data sets that are supported in a finite volume, without the exact location
of this support being known. 
In its most stripped-down form, the problem consists in reconstruction of a function
$f$ supported on a compact domain $\Omega$ from measurements of the magnitude of its Fourier transform,
$|\widehat{f}(\xi)|$. For the plain-vanilla scattering implementation, the physical object 
to be reconstructed is illuminated by a plane wave. If the object is more extended,
illumination by more narrowly concentrated beams might be easier to achieve; one then acquires
scattering intensity data for each of several different beam illuminations, which corresponds 
to replacing the Fourier transform by an STFT. The methodology which we just described is widely used; for example
in Fourier Ptychography \cite{humphry2012ptychographic,rodenburg2008ptychography,zheng2013wide}.

If the scene to be reconstructed
consisted of several disjoint objects, separated by ``empty'' space (the example in Figure 1 
in \cite{marchesini2003x} illustrates such an example), then reconstruction of the individual objects might
be numerically and mathematically much easier if it were allowed to reconstruct each object 
up to a uniform
phase (for complex $f$) or up to a uniform sign (for real $f$).  The simulation illustrated in 
Figure \ref{fig:walds}, for a 1-dimensional Gabor transform, suggests as much.

Our second example is concerned with audio processing. It is well known that human audio perception is insensitive to a ``global phase change''. One way to show this is
to start with a (real-valued) audio signal $f(t)$, with Fourier transform $\widehat{f}(\xi)$,
and carry out the following operations: first, take its analytic representation  
$f_a$ by disrecarding its negative frequency components: $\widehat{f_a}(\xi):=\widehat{f}(\xi)\chi_{\xi>0}$;
next multiply it by an arbitrary (but fixed) phase $e^{\i\alpha}$, $\widehat{f_a^\alpha} (\xi):= e^{\i\alpha} \widehat{f_a}(\xi)$. Finally we turn it back into the Fourier
transform of a real-valued function $f^{\alpha}$ by ``symmetrizing'', i.e. by setting 
$\,\widehat{f^{\alpha}}(\xi)\,= \,e^{\i\alpha}\widehat{f}(\xi)\chi_{\xi>0}\,+\,
e^{-\i\alpha}\overline{\widehat{f}(-\xi)}\chi_{\xi<0}\,$ (note that $\overline{\widehat{f}(-\xi)} = \widehat{f}(\xi)$ because $f$ is real-valued). Equivalently, $f^{\alpha}$ can be
expressed in terms of the original signal $f$ as  
$f^{\alpha}(t)=\,\cos\alpha\cdot f(t) \,+\, \sin\alpha\cdot (Hf)(-t)$,
where $Hf$ is the Hilbert transform of $f$. Then, even though the plot of $f$ is typically very different 
from that of $f^\alpha$ (if $\alpha$ differs significantly from a multiple of $2\pi$),
the two sound the same to the human ear, making them equivalent for most practical applications. 
Consider now an audio signal $f$ consisting of two ``bursts'' of sound, separated by a short stretch of silence, i.e. $f(t)\,=\,f_1(t)\,+\,f_2(t)\,$, with $\suppp f_1=[t_1,T_1]$ and 
$\suppp f_2=[t_2,T_2]$ where $t_2-T_1 > \tau$ for some pre-assigned positive $\tau$ (typically of
the order of a few tenths of seconds). Figure \ref{fig: audio2} plots such an example, for the utterance ``cup, luck'',
retrieved from the database at \url{http://www.antimoon.com/how/pronunc-soundsipa.htm}, with ``cup'' corresponding to $f_1$, ``luck'' to $f_2$. Because both
$f_1$ and $f_2$ are highly oscillatory (as is customary for audio signals), 
$Hf_1$ and $Hf_2$ both have fast 
decay, and are negligibly small outside $\suppp f_1=[t_1,T_1]$ and 
$\suppp f_2=[t_2,T_2]$, respectively. For such signals $f$, one can pick two {\em different} phases
$\alpha_1$ and $\alpha_2$, and construct 
$f^{\alpha_1,\alpha_2}=f_1^{\alpha_1}+f_2^{\alpha_2}$; the resulting audio signals again sound exactly the same as the original
$f$. On \url{https://services.math.duke.edu/~rachel/research/PhaseRetrieval/acoustic_result/acoustic_result.html}, one can download and/or listen to $f$ and $f^{\alpha_1,\alpha_2}$.

We further note that signals remain undistinguishable to the human ear under a more general class of transformations: even for signals $f=\sum_{j=1}^J f_j$ with $J>2$ components, in which the $f_j$ correspond to components $F_j$ that are separated in the time-frequency domain (but not necessarily in time, or in frequency) replacing each $F_j$ by $e^{\i \alpha_j} F_j$ results in a signal that sounds exactly like the original signal $f$ (see Figure \ref{fig:Gabormodul} for an example of such a signal and its Gabor transform; on   \url{https://services.math.duke.edu/~rachel/research/PhaseRetrieval/acoustic_result/acoustic_result.html}
one can listen to this example and component-wise phase-shifted versions). 

If one seeks to reconstruct $f$ only within the equivalence class of audio signals that are indistinguishable
from $f$ by human perception, then it is thus natural to treat all the functions of type (\ref{eq:bulkphase}) as equivalent, for all choices of $\alpha_j$.

\end{enumerate}

\begin{figure}[h!]
	\centering
	\begin{subfigure}[t]{.45\textwidth}
	\includegraphics[width = \textwidth]{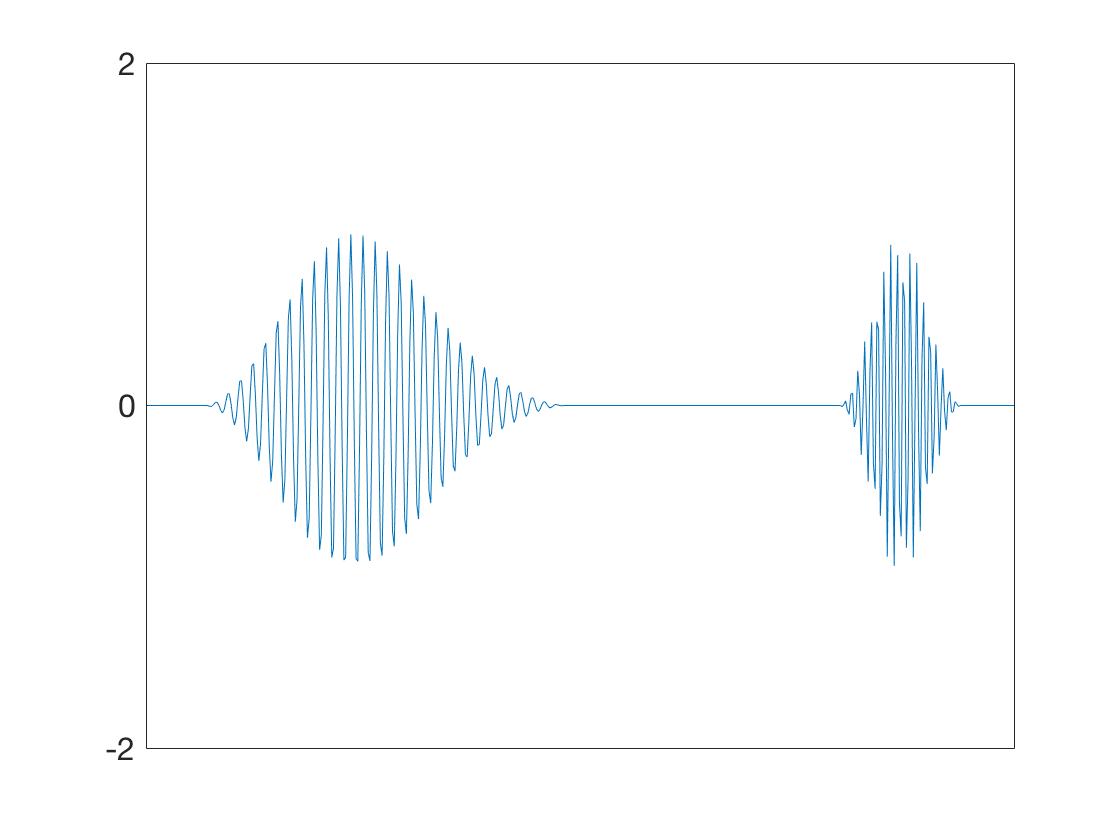}
	\caption{$\real(f)$ {\small in time domain}} \label{fig: walds-1}
	\end{subfigure}
	\hfill
	\begin{subfigure}[t]{.47\textwidth}
	\includegraphics[width = \textwidth]{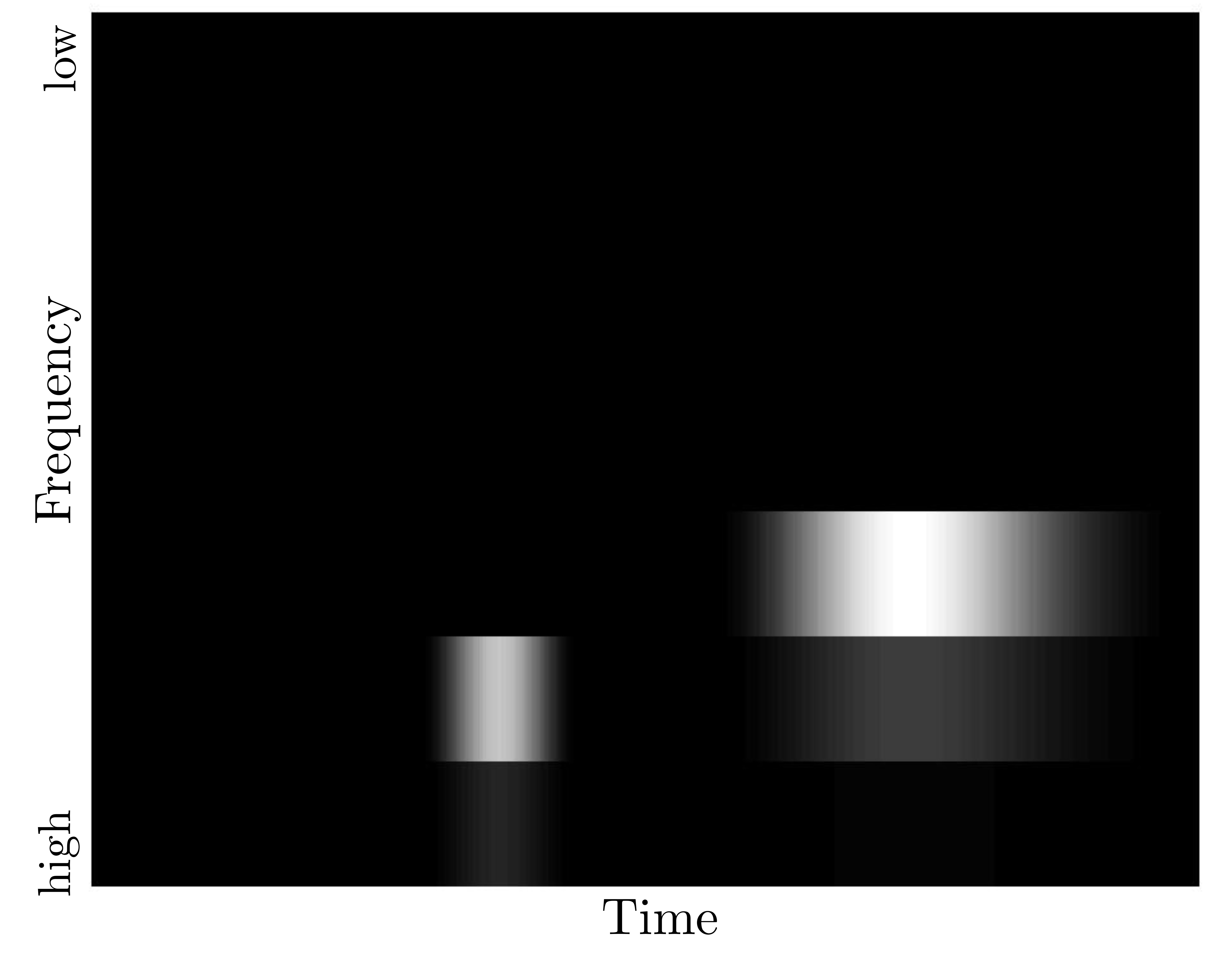}
	\caption{$|V_{\varphi} f|$ {\small in time-frequency (TF) domain of Gabor transform}}\label{fig: walds-2}
	\end{subfigure}\\[1em]
	\begin{subfigure}[t]{.45\textwidth}
	\includegraphics[width = \textwidth]{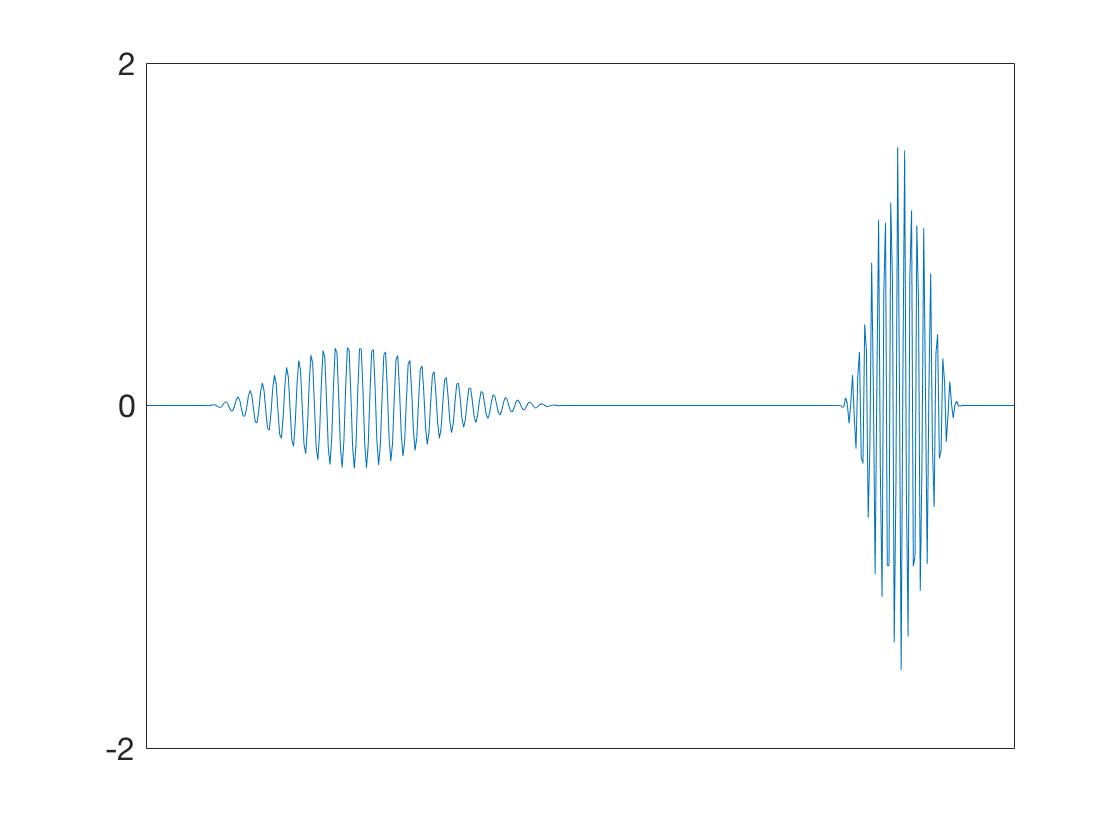}
	\caption{$\real(f-f^{rec})$ {\small in time domain}}\label{fig: walds-3}
	\end{subfigure}
	\hfill
	\begin{subfigure}[t]{.45\textwidth}
	\includegraphics[width = \textwidth]{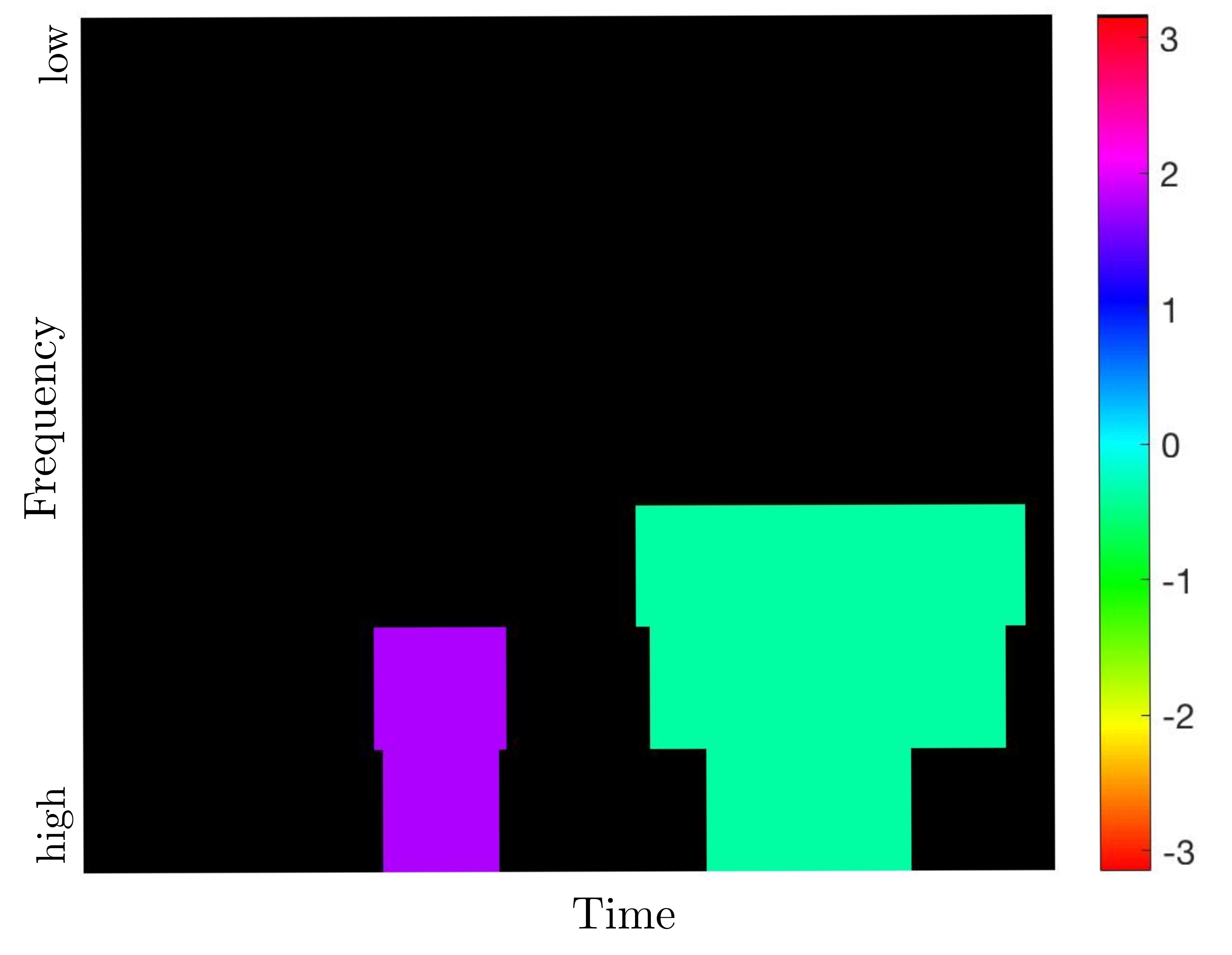}
	\caption{$\arg(V_{\varphi}f/V_{\varphi}f^{rec})$ {\small in TF domain}}\label{fig: walds-4}
	\end{subfigure}
	\caption{ Phase retrieval on the Gabor measurements $|V_{\varphi} f|$ of an analytic function $f$; note that in (\ref{fig: walds-4}), $\arg(V_{\varphi}f/V_{\varphi}f^{rec})=\alpha_j$, on the domain where $F_j$ is large, $j= 1, 2.$ The Gabor measurements  $|V_{\varphi} f|$ consist of two components that are localized and well-separated in time, as illustrated by (\ref{fig: walds-1}) and (\ref{fig: walds-2}). On the measurements of  $|V_{\varphi} f|$ shown in (\ref{fig: walds-2}) we applied the algorithm in \cite{waldspurger2015these} to reconstruct a candidate $f^{rec}$, which is markedly different from $f$, as shown by (\ref{fig: walds-3}). However, a careful analysis of each of the components separately shows that the only difference lies in a different phase factor (see (\ref{fig: walds-4})): $f^{rec} = e^{\i \gamma_1} f_1 + e^{\i \gamma_2} f_2$ for some $\gamma_1  \neq \gamma_2$, whereas $f=f_1+f_2$.}
	\label{fig:walds}
\end{figure}

\begin{figure}
	\centering
	\begin{subfigure}[t]{.45\textwidth}
	\includegraphics[width = \textwidth]{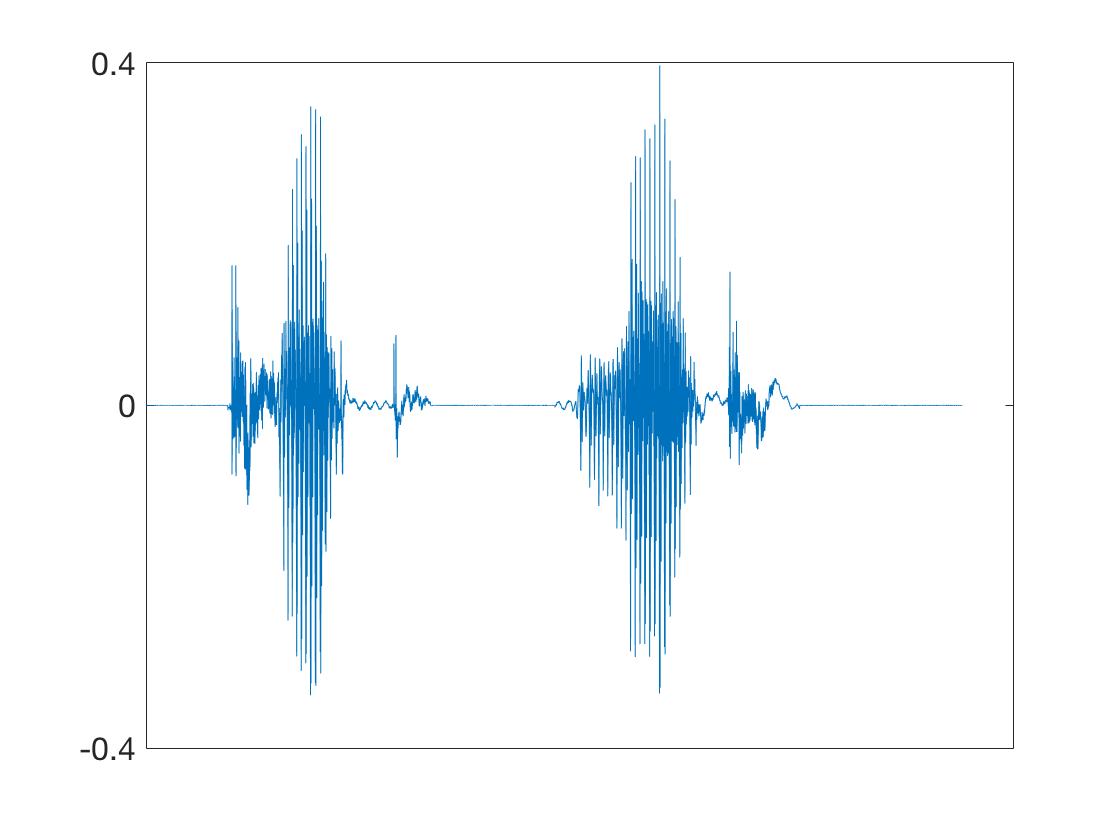}
	\caption{Audio signal $f$ in time domain.}\label{fig: audio2}
	\end{subfigure}
	\hfill
	\begin{subfigure}[t]{.45\textwidth}
	\includegraphics[width = \textwidth]{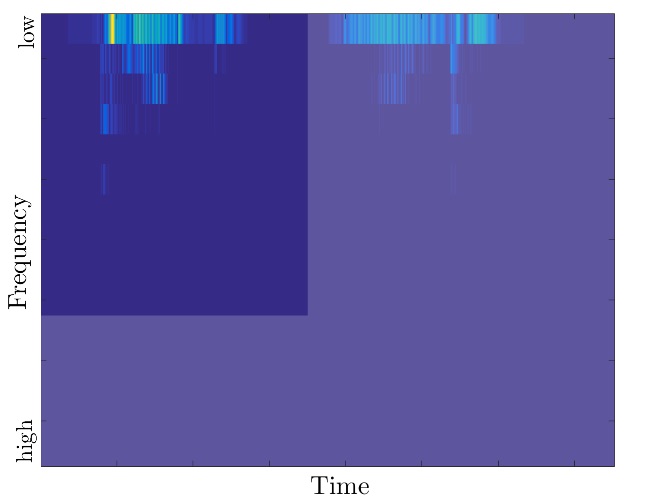}
	\caption{Time-frequency plot of the magnitude $|F|$ of the Gabor representation of $f$, with one separated component highlighted.}\label{fig: audio2-TF}
	\end{subfigure}
	\caption{Audio signal ``cup luck" and its Gabor measurements; both in the time domain and in the time-frequency plane the two components are well-separated.}
\end{figure}

\begin{figure}
	\centering
	\begin{subfigure}[t]{.45\textwidth}
	\includegraphics[width = \textwidth]{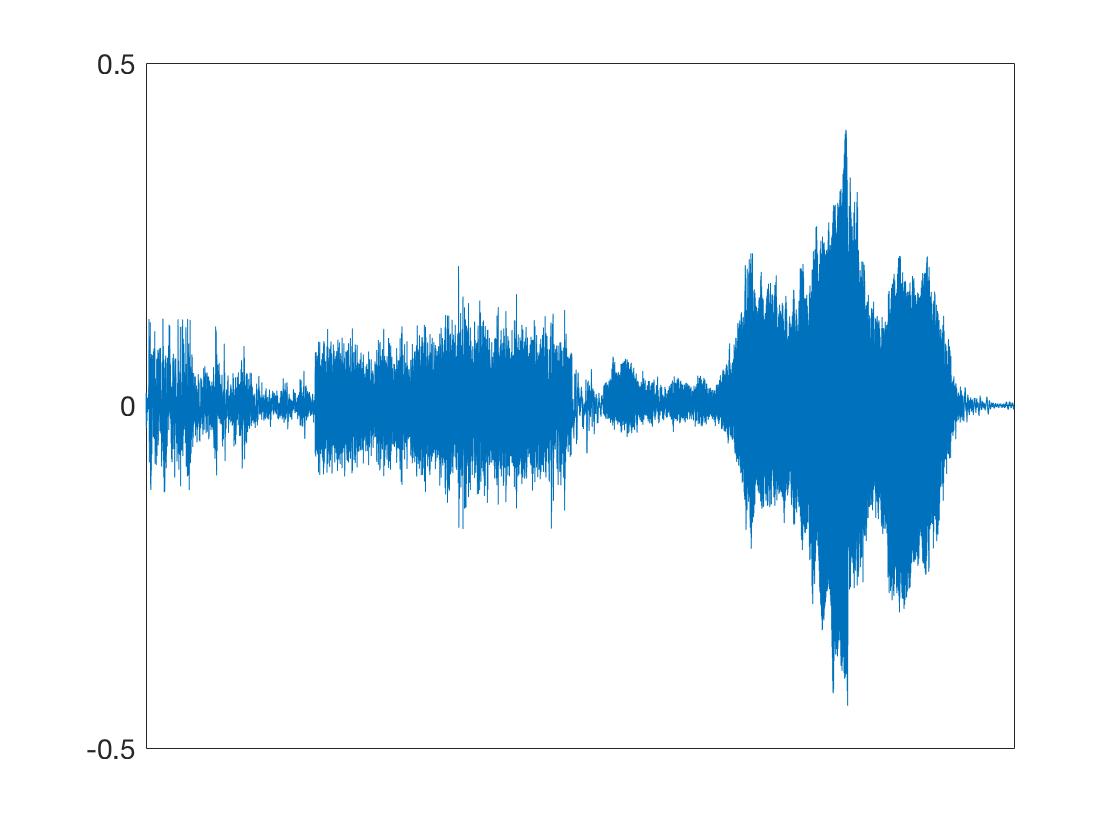}
	\caption{Audio signal $f$ in time domain.}\label{fig: audio1}
	\end{subfigure}
	\hfill
	\begin{subfigure}[t]{.45\textwidth}
	\includegraphics[width = \textwidth]{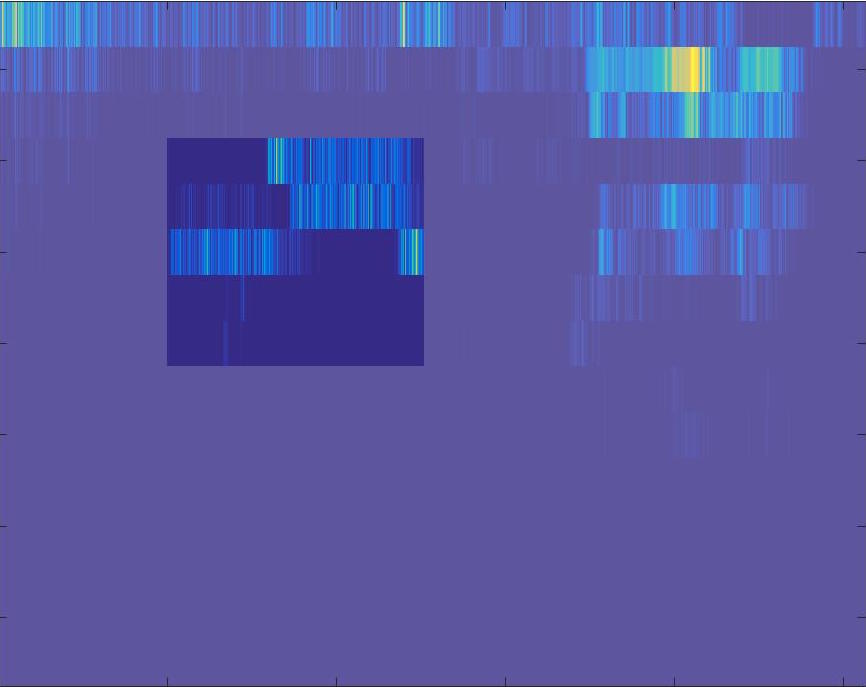}
	\caption{Time-frequency plot of the magnitude $|F|$ of the Gabor representation of $f$, with one separated component highlighted. }\label{fig: audio1-TF}
	\end{subfigure}
	\caption{Audio signal of a sound mixture of thunder, a bird call and a baby crying, together with its Gabor measurements; although in this example
		there is no clear separation in either time or frequency, one can carve out separated components in the time-frequency plane (one of them is highlighted here).}
		\label{fig:Gabormodul}
\end{figure}

These observations suggest a new paradigm for stable phase retrieval: rather than aiming for
bounds of the form (\ref{eq:oldspab}) (which we know do not exist), we investigate a weaker form of stability
that would be sufficient for this type of application: we study the stability of phase retrieval \emph{subject to the equivalence $\sum_{j=1}^kF_j \sim \sum_{j=1}^k
e^{\i \alpha_j}F_j$}, 
that is, bounds of the form 
  \begin{equation}\label{eq:newform}
  	\inf_{\alpha_1,\dots , \alpha_k \in \mathbb{R}}
  	\sum_{j=1}^k\norm{F_j - e^{\i\alpha_j}G_j}{\mathcal{B}} \le C\norm{|F|-|G|}{ \mathcal{B}'},
  \end{equation}
  where $\mathcal{B}$, $\mathcal{B}'$ are suitable Hilbert (or Banach) spaces and $F_j, G_j$ any pairs of functions which have their essential support contained in sets $D_j$. 
  
The question of whether bounds of the form (\ref{eq:newform}) can actually be established for examples of practical interest will be the main subject of this article.
  
\subsection{Stability for Atoll Functions}\label{sec:atoll}

To study this question mathematically, we first need to make it more precise. 
Figure \ref{fig:Gabormodul} suggests that a realistic model for Gabor transform measurements on acoustic signals
are functions $\sum_{j=1}^kF_j$ where each $F_j$ is ``large'' on a domain $D_j$, which we shall interpret as a strictly positive lower bound on $|F_j|$. In practice, we expect that $F_j$ may still have zeroes within $D_j$, which means that there could be ''holes'' in $D_j$ (reasonably small neighborhoods of these zeroes) on which $|F_j|$ could \emph{not} be bounded below away from zero. This motivates the following definition:

\begin{definition}[Atoll domains]\label{def:holes}Let $D\subset \mathbb{C}$ be a domain.
	A domain $D_0\subset D$ is called a \emph{hole} of $D$
	if $D_0$ is simply connected and $\overline{D_0}\subset D$.
	By definition, $D$ is called a domain with disjoint holes
	$(D_0^i)_{i=1}^l$ if $D_0^i$ is a hole of $D$ for all $i=1,\dots, l$ and the sets $\overline{D_0^i}, i=1, \dots, l$ are pairwise disjoint.
	For a set $D$ with disjoint holes $(D_0^i)_{i=1}^l$ we
	call $D_+:=D\setminus (\bigcup_{i=1}^l \overline{D_0^i})$
	an \emph{atoll domain}. The holes $(\overline{D_0^i})_{i=1}^l$ are called
	\emph{lagoons} of the atoll domain.  
\end{definition}
A prototypical domain with one hole is an annulus. More precisely, if for $z\in \mathbb{C}$ and $s>r>0$ we denote by $B_{r,s}(z)$ the 
annulus
$$
	B_{r,s}(z):=\{w\in\mathbb{C}:\ r<|w-z|<s\},
$$
then $B_{r,s}(z)$ is an atoll domain with one hole. (We shall use the notation $B_r(z)$ for the open disc with radius $r$ and center $z$.)
 Associated with a domain with holes we define the following 
 class of functions which will act as our model for the functions $F_j$ mentioned
 in Section \ref{sec:new}.
 \begin{definition}\label{def:funcclass}
 Suppose that $D$ is a bounded atoll domain with disjoint lagoons $(D_0^i)_{i=1}^l$ and let
 $\Delta \geq \delta >0$. Then we define 
 the function class $\mathcal{H}(D,(D_0^i)_{i=1}^l,\delta,\Delta)$
 of \emph{atoll functions} associated with $D$ and $(D_0^i)_{i=1}^l$
 as follows:
\begin{multline}\label{funcclass}
	\mathcal{H}(D,(D^i_0)_{i=1}^l,\delta,\Delta):=\\
	\left\{F\in C^1(D):\ 
	\max\{|F(z)|,| \nabla |F|(z)|\} \le \Delta \mbox{ for all } z\in D,\  |F(z)|\geq \delta \mbox{ for all } z\in D_+\right\}.
\end{multline}
\end{definition}
The interpretation of Definition \ref{def:funcclass} is straightforward.
It consists of functions on $D$ which are large on an atoll $D_+$
and possibly small on a number of lagoons $D_0^i$ which are encircled by an atoll $D_+$. See Figure \ref{fig:atoll} for an illustration.

\begin{figure}[h!]
	\centering
	\includegraphics[width = .9\textwidth]{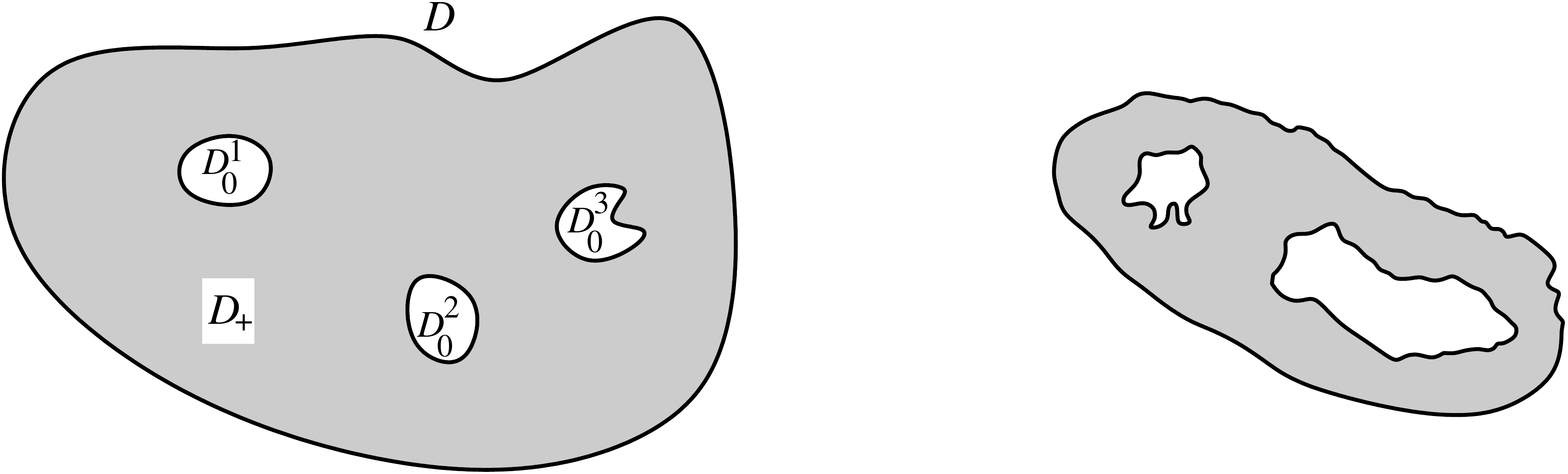}
\caption{Left: An atoll domain with three lagoons: $D$ is the open domain enclosed by the outer curve, $D_0^1$, $D_0^2$ and $D_0^3$ are the three ``holes'' or {\em lagoons}, and $D_+$, the shaded area, is the {\em atoll domain}. Right: although most atoll islands (in their standard geographic meaning) are sickle-shaped, with lagoons in the shape of large bays, narrowly connected with the sea or ocean surrounding the island, some are indeed similar to the figure on the left; given here is the shape of Teeraina island, a coral atoll that is part of Kirabati, at about $4.71^{\circ}$ North latitude and $160.76^{\circ}$ West longitude.}
	\label{fig:atoll}
\end{figure}

The functions we want to consider for phase retrieval (and for which we will show that phase retrieval is uniformly stable) will correspond to a linear combination of atoll functions, each supported on different atolls. Furthermore, as proposed in Section \ref{sec:new},
the reconstruction will be allowed to assign different phases to 
components supported on different atolls.

The present paper establishes such results; as an appetizer we mention the following stability result
which applies to the reconstruction of a function $f\in L^2(\mathbb{R})$
from measuring absolute values of its Gabor transform $V_\varphi f$
as defined in (\ref{eq:wft}), with window function $\varphi(t):=e^{-\pi t^2}$.

Let us suppose that we know a priori that the function $f$
to be recovered can be written as a sum
$f=\sum_{j=1}^k f_j$
with functions $f_j$ each having time-frequency (TF) concentration in an annulus or a disc, i.e.,
\begin{equation}\label{eq:TFcon}
	V_\varphi f_j\in \mathcal{H}(D_j,D_{0,j},\delta_j,\Delta_j),\quad
	\mbox{for } j=1,\dots , k,
\end{equation}
where the $D_j$ are (possibly disjoint) discs $D_j:=B_{s_j}(z_j)$, each with one hole, $D_{0,j}:=B_{r_j}(z_j)$ for $0\le r_j<s_j$ and $z_j\in \mathbb{C}$ for $j=1,\dots , k$.
In audio processing, each of the $f_j$'s may be interpreted as different tones and in different periods of time, each having its TF concentration on the set $D_j$ in the following sense. 
\newtheorem*{theorem*}{Theorem}
\newtheorem*{definition*}{Definition}
\begin{definition}
	For $B\subset \mathbb{R}^2$ and $\varepsilon >0$  
	we say that \emph{$f\in L^2(\mathbb{R})$ is $\varepsilon$-concentrated
	in $B$}, if 
	$$
		\int_{\mathbb{R}^2\setminus B}|V_\varphi f(x,y)|^2dx dy \le 
		\varepsilon^2.
	$$
\end{definition}
We use the notation $W^{1,p}(D)$ for the Sobolev space with norm
$$
	\norm{F}{W^{1,p}(D)}=\norm{F}{L^p(D)}+\norm{\nabla F}{L^p(D)}.
$$
With these definitions and notation, we can now formulate the following theorem that states one of our stability results:
\begin{theorem}\label{thm:gaborapp}
	Suppose that $f=\sum_{j=1}^k f_j\in L^2(\mathbb{R})$ 
	such that (\emph{\ref{eq:TFcon}}) holds true with each $f_j$
	 $\varepsilon_j$-concentrated in $D_j$.
	Suppose that $g\in L^2(\mathbb{R})$ can likewise be written as 
	$g=\sum_{j=1}^k g_j$ with each $g_j$ $\varepsilon_j$-concentrated in $D_j$. Then there exists a continuous function $\rho:[0,1) \to \mathbb{R}_+$ and a uniform constant $c>0$ so that the following estimate
	holds:
	\begin{align*}
		\inf_{\alpha_1,\dots, \alpha_k\in \mathbb{R}} &
			\sum_{j=1}^k \norm{f_j-e^{\i \alpha_j}g_j}{L^2(\mathbb{R})}
			\le c\cdot 
			\sum_{j=1}^k \frac{\Delta_j^2}{\delta_j^2} (1+\rho(r_j/s_j)\cdot s_j)\cdot \\
			& \left(1+(r_j/s_j)^{1/2}\cdot \rho(r_j/s_j)\cdot (s_j+1)\cdot e^{r_j^2 \pi/2}\right)
			\cdot 
			\norm{|V_\varphi f|-|V_\varphi g|
			}{W^{1,2}((D_j)_+)} +\sum_{j=1}^k\varepsilon_j.
	\end{align*}
\end{theorem}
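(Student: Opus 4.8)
The plan is to separate the statement into a soft transform-domain reduction and a single hard estimate---stable phase retrieval for analytic functions on one annulus---linked by the analyticity of the Gaussian-windowed Gabor transform. With $\varphi(t)=e^{-\pi t^2}$, the Bargmann identity gives $V_\varphi h(x,y)=e^{\pi\i xy}\,e^{-\pi(x^2+y^2)/2}\,\mathcal{B}h(x-\i y)$ up to a fixed constant, where $\mathcal{B}h$ is entire; hence $|V_\varphi h|=e^{-\pi(x^2+y^2)/2}|\mathcal{B}h|$ is the modulus of a holomorphic function up to the Gaussian weight, and on the atoll $(D_j)_+$, where $|V_\varphi h|\ge\delta_j>0$, that holomorphic function is zero-free. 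This zero-free holomorphic structure on a connected domain is exactly what makes phase retrieval locally rigid, and it is the reason the function class of Definition \ref{def:funcclass} is the right model.

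First I would descend from the signal domain to the time-frequency plane. Since $V_\varphi$ is, up to the factor $\norm{\varphi}{L^2(\R)}$, an isometry of $\Ltwo$ into $L^2(\Rtwo)$, for each $j$ and each $\alpha_j$ one has $\norm{f_j-e^{\i\alpha_j}g_j}{L^2(\R)}=\norm{\varphi}{L^2(\R)}^{-1}\norm{V_\varphi f_j-e^{\i\alpha_j}V_\varphi g_j}{L^2(\Rtwo)}$. The $\varepsilon_j$-concentration of $f_j,g_j$ in $D_j$ lets me replace the integral over $\Rtwo$ by one over $D_j$ at the cost of an additive $\mathcal{O}(\varepsilon_j)$, which is the source of the trailing $\sum_j\varepsilon_j$. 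Since the atolls are pairwise disjoint and the other components $\sum_{i\ne j}V_\varphi f_i$ are concentrated away from $(D_j)_+$, the measured total magnitude obeys $|V_\varphi f|\approx|V_\varphi f_j|$ and $|V_\varphi g|\approx|V_\varphi g_j|$ on $(D_j)_+$, so that $\norm{|V_\varphi f|-|V_\varphi g|}{W^{1,2}((D_j)_+)}$ controls the per-component quantity that enters the core estimate; the $C^1$-bounds $\Delta_j$ built into the atoll class are what let me also compare gradients there. Finally, using the covariance of the Gabor transform under time-frequency shifts I may assume $z_j=0$, which is why the final constants depend on $r_j,s_j$ but not on $z_j$.

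The core step is stability of analytic phase retrieval on one annulus $B_{r_j,s_j}(0)$. Write $F:=V_\varphi f_j$, $G:=V_\varphi g_j$, $F=|F|e^{\i\phi_F}$, $G=|G|e^{\i\phi_G}$ on $(D_j)_+$. Because $\mathcal{B}f_j,\mathcal{B}g_j$ are zero-free there, the modulation and Gaussian factors cancel in the phase difference $\psi:=\phi_F-\phi_G$, leaving $\psi$ a harmonic conjugate of $h:=\log|F|-\log|G|$. By the Cauchy--Riemann equations $\nabla\psi$ is a rotation of $\nabla h=\nabla|F|/|F|-\nabla|G|/|G|$, and the pointwise bounds $|F|,|G|\ge\delta_j$, $\,|\nabla|F||,|\nabla|G||\le\Delta_j$ give $|\nabla h|\lesssim\delta_j^{-1}\bigl|\nabla|F|-\nabla|G|\bigr|+\delta_j^{-2}\Delta_j\,\bigl||F|-|G|\bigr|$; together with $\norm{F-e^{\i\alpha_j}G}{L^2(D_j)}\lesssim\norm{|F|-|G|}{L^2(D_j)}+\Delta_j\norm{\psi-\alpha_j}{L^2(D_j)}$ this already accounts for the factor $\Delta_j^2/\delta_j^2$. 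Integrating $\nabla\psi$ to recover $\psi$ up to the single free constant $\alpha_j$ is a Poincar\'e-type step on the annulus whose constant, scaled by the size $s_j$, is $1+\rho(r_j/s_j)\,s_j$; since the Poincar\'e/harmonic-conjugation constant of $B_{r,s}$ degenerates as the annulus becomes thin, $\rho$ is defined and continuous only on $[0,1)$. On a genuine annulus the conjugate may be multivalued around the lagoon, so $\psi$ splits into a single-valued part controlled as above plus a winding contribution whose period is $\oint\partial_n h$; estimating this period and converting the measured Gabor magnitude back to $|\mathcal{B}f_j|$ across the lagoon of radius $r_j$---where the Gaussian weight $e^{-\pi|z|^2/2}$ must be inverted---produces the annulus-only second factor, including the growth $e^{r_j^2\pi/2}$ and the prefactor $(r_j/s_j)^{1/2}\rho(r_j/s_j)(s_j+1)$ that vanishes when $r_j=0$.

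The main obstacle is precisely this single-atoll estimate in the annulus case, and within it the interplay of two effects. First, controlling the multivalued (winding) part of the harmonic conjugate requires a quantitative harmonic-conjugation inequality on the annulus with explicit dependence on the ratio $r_j/s_j$, and one must verify that optimizing over the single constant $\alpha_j$ genuinely removes the uncontrolled part of $\psi$; this is the origin of $\rho$ and the reason the result is a component-wise, not global, phase statement. Second, every passage between the measured magnitude $|V_\varphi f_j|$ and the holomorphic modulus $|\mathcal{B}f_j|$ drags the Gaussian weight through the estimates and, near the lagoon, forces the inverse weight $e^{r_j^2\pi/2}$. Keeping all of these geometric constants explicit and uniform in $(\delta_j,\Delta_j,r_j,s_j)$---rather than merely finite---while simultaneously tracking the concentration errors back through the isometry is the technically demanding part of the argument.
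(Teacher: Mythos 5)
Your outer reduction---the isometry of $V_\varphi$, using $\varepsilon_j$-concentration to pass from $L^2(\mathbb{R})$ to $L^2(D_j)$ at an additive cost $\sum_j \varepsilon_j$, and the Bargmann-transform analyticity---coincides with the paper's route (Theorem \ref{thm:gaborstab} via Theorem \ref{thm:stabann}). The core annulus argument, however, has a genuine hypothesis failure: you write $G=|G|e^{\i\phi_G}$ and form $h=\log|F|-\log|G|$ with harmonic conjugate $\psi$, which requires $V_\varphi g_j$ (equivalently $\mathcal{B}g_j$) to be zero-free on the atoll. But the theorem imposes \eqref{eq:TFcon} only on the $f_j$; the $g_j$ are merely $\varepsilon_j$-concentrated, and correspondingly in Theorem \ref{prop:main} the second function $F_2$ is just $C^1$ with $\eta\cdot F_2$ holomorphic, with no lower bound whatsoever. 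A zero of $V_\varphi g_j$ inside $(D_j)_+$ makes $\log|G|$ and $\phi_G$ undefined and your conjugation argument breaks; ruling such zeros out from smallness of $\norm{|F|-|G|}{W^{1,2}((D_j)_+)}$ is delicate (there is no $W^{1,2}\hookrightarrow L^\infty$ in 2D) and in any case the estimate must hold without any smallness assumption on the right-hand side. The paper sidesteps all of this by never taking a logarithm: it works with the quotient $F=F_2/F_1$, dividing only by the atoll function, and its key Lemma \ref{keylemma} ($|F'|=\left|\nabla|F|\right|$ for holomorphic $F$) feeds the analytic Poincar\'e inequality to bound $\norm{F-F(z_0)}{L^p(D_+)}$ directly. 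This also dissolves your winding-number obstacle---no multivalued conjugate ever appears---whereas your proposed period estimate ($\oint \partial_n h \in 2\pi\mathbb{Z}$, hence zero when small) would again force an unaddressed case distinction on the size of the data.

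A second missing piece is the lagoon. After the isometry step the left-hand side is an $L^2$ norm over all of $D_j=B_{s_j}(z_j)$, including $D_{0,j}=B_{r_j}(z_j)$, where no lower bound holds and your phase machinery says nothing. You correctly locate the origin of the factor $e^{r_j^2\pi/2}$ in the Gaussian weight ratio across the lagoon (this is precisely $\var(\eta_{z_j},B_{r_j}(z_j))\le c\,e^{\pi r_j^2/2}$ in the paper), but you supply no mechanism bounding $\norm{F_2-e^{\i\alpha}F_1}{L^p(D_{0,j})}$. The paper's Step 2 does this by applying the Hardy-space boundary-domination result (Theorem \ref{thm:boundvals}) to the holomorphic difference $\eta\cdot(F_2-e^{\i\alpha}F_1)$ on the lagoon and then pulling the boundary norm back to the atoll with the trace theorem (Theorem \ref{thm:trace}); that step is indispensable and absent from your outline. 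Finally, your ``optimize over $\alpha_j$'' glosses over the paper's quantitative device of choosing the base point $z_0$ so that the sampling constant and the analytic Poincar\'e constant are controlled simultaneously (Lemma \ref{lem:tlem}), which is what makes the final constants uniform in the stated form.
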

The theorem states that a function that is the sum of components, each of which has a Gabor transform of type (\ref{eq:TFcon}), can be stably reconstructed from the absolute values of its Gabor transform, whenever its Gabor transform is concentrated on a number of atolls with lagoons that are not too large. 

Note that as the lagoons get large, more precisely, if we let $r_j$ grow while keeping the ratios $r_j/s_j$ fixed, the stability of reconstruction
degenerates \emph{at most} exponentially in their area. 
This is completely in line
with the results of \cite{cahill2016phase}, and in particular with the example mentioned in Section \ref{sec:instab} for which the stability of the reconstruction degenerates \emph{at least} exponentially in the size of its corresponding lagoon. Therefore, we believe that such a decay is not a proof artifact but a fundamental barrier to stable phase retrieval, related to the TF-localization properties
of the window $\varphi$, see also Remark \ref{rem:lagoondecay} in Section \ref{sec:cauchy}.

One can construct an example of phase retrieval from Gabor measurements in the spirit of Example \ref{ex:cahill} of real-valued measurements in 1D: In \cite{aifariunstable} some of the authors construct two functions $f_a^+$, $f_a^-$, for which the (Gabor transform) measurements are close to each other in absolute value but such that $\|f_a^+-e^{\i \alpha} f_a^-\|_{L^2(\R)}$ is not small for any phase factor $e^{\i \alpha}$, $\alpha \in \mathbb{R}$. The functions $f_a^+$, $f_a^-$ are constructed such that their Gabor transforms are concentrated on two separated discs $B_{r_0}((-a,0))$ and $B_{r_0}((a,0))$, so that they can be viewed as atoll functions. Applying Theorem \ref{thm:gaborapp} to this example gives stability of the phase retrieval problem with a stability constant that is independent of $a$. In contrast, in the classical sense (i.e. when $V_\varphi f_a^+$, $V_\varphi f_a^-$ are not treated as atoll functions), phase retrieval is unstable in this example with the stability constant deteriorating exponentially in $a^2$. We note however, that the stability constant from Theorem \ref{thm:gaborapp} is not independent of the size of atolls, i.e. of the radius $r_0$. In fact, it grows exponentially in $r_0^2$. Recent work \cite{grohsrathmair} by one of the authors has developed improved results that overcome this growth of the stability constant in the size of the atolls. 

Theorem \ref{thm:gaborapp} is a special case of our much more general Theorem \ref{prop:main}, proved 
in Section \ref{sec:Gabor} below. Theorem \ref{prop:main} however applies to 
a much wider class of measurement scenarios. Another application, discussed in Section \ref{sec:cauchy}, concerns
the phase retrieval problem from measuring absolute values of the Cauchy wavelet
transform of a signal.

\subsection{Proof Strategy}
We briefly describe the underlying mechanism in the proof of the above-mentioned stability theorem. 

\begin{itemize}
	\item At the backbone of Theorem \ref{thm:gaborapp} lies the well-known
	fact that the Gabor transform $F(x,y):=
	V_\varphi f(x,-y)$ is a holomorphic function, up to normalization.
	More precisely, there exists a function $\eta$ such that
	the product $\eta\cdot F$ is holomorphic, see Theorem \ref{thm:gaboranal}.
	In fact, in Theorem \ref{prop:main}, we establish a general stability result for atoll functions
	which are, up to normalization, holomorphic.
	
	\item A key insight leading to this result is the observation that, for a holomorphic function $F$
	the rate of change of $F$ is dominated by the rate of change of $|F|$. This
	fact, which is Lemma \ref{keylemma}, follows directly from the Cauchy-Riemann equations. 
	\item Lemma \ref{keylemma} then allows us to prove a stability result for atoll 
	functions, restricted to the atoll $D_+$ on which a lower bound on 
	their absolute value holds true.
	
	\item In order to also establish a stability bound on the lagoons $(D_0^i)_{i=1}^l$ we use a version of the maximum principle and a trace theorem 
	for Sobolev functions to prove that the reconstruction error on the lagoons
    $(D_0^i)_{i=1}^l$ can be dominated by the approximation error on the atoll $D_+$
    which has been controlled in the previous step. These two steps are carried out
    in Section \ref{sec:proof}. The proof turns out to be involved and dependent on 
    a number of preparatory results which are summarized in Section \ref{sec:prep}.
\end{itemize}

Our main result is Theorem \ref{prop:main} which establishes a stability result
for arbitrary atoll functions that arise from holomorphic measurements (up to normalization). Theorem \ref{thm:gaborapp} then comes as a corollary, but 
the machinery of Theorem \ref{prop:main} allows to deduce stability of phase retrieval for any type of measurements which depend holomorphically on its
parameters. As a further example we mention Cauchy wavelets which have
been treated previously in \cite{waldspurger2015these}.

\subsection{Outline}
The article is structured as follows. Section \ref{sec:prep} provides a package of all the preparatory tools that will be needed later. In particular, we describe analytic Poincar\'e inequalities and the relation of the analytic Poincar\'e constant to the classical Poincar\'e constant in Section \ref{subsec:poincare}. Sections \ref{subsec:trace} and \ref{subsec:bv} outline the results that are needed to control the reconstruction error on the lagoons $(D_0^i)_{i=1}^l$. Stable point evaluations and the simultaneous control of two different constants that will appear in the main result of this paper are treated in Section \ref{subsec:pointev}.

Section \ref{sec:main} features our main result (Theorem \ref{prop:main}) and gives its illustration for two concrete examples: the case of the domain $D=D_+$ being a disc (Section \ref{sec:disc}) and the case of $D_+$ being an annulus (Section \ref{sec:annulus}). In the remainder of this section, the cases of magnitude measurements of the Gabor transform (Section \ref{sec:Gabor}) and of the Cauchy wavelet transform (Section \ref{subsec:cauchy}) are studied and the stability constants are quantified. We give the proof of the main theorem (Theorem \ref{prop:main}) in Section \ref{sec:proof}.

\section{Preparatory Results}\label{sec:prep}
In the course of our work we 
will use several auxiliary results that are summarized in this section. For an overview of the main results of this paper, the reader may want to visit Section \ref{sec:main} directly.
We consider a path-connected domain $D\subset \mathbb{C}$ which is sufficiently nice (e.g. Lipschitz domain) and let $\mathcal{O}(D)$ denote the space of holomorphic functions from $D$ to $\mathbb{C}$. 

We will always write $z=x+\i y\in \mathbb{C}$ and $F(z)=u(x,y)+\i v(x,y)$.
We denote $F'(z)=u_x(x,y)+\i v_x(x,y)$, and $\nabla F(z) = (\nabla u(x,y), \nabla v(x,y))\in \mathbb{R}^{2\times 2}$.

Any $F\in \mathcal{O}(D)$ satisfies the Cauchy-Riemann equations
\begin{equation}\label{eq:CR}
	u_x = v_y \quad \mbox{and}\quad u_y = -v_x.
\end{equation}
A key object of our study is the absolute value 
$|F|:D\to \mathbb{R}$ and its gradient $\nabla |F|=(|F|_x,|F|_y)^T$.
For a subset $B\subset \mathbb{C}$ we denote by $|B|$ its area and by $\chi_B$ its indicator function. 
We write 
$$
	\mathbb{R}_+=\{x\in \mathbb{R}:\  x>0\}
$$
and
$$
	\mathbb{C}_+=\{z = x+\i y:\ x\in \mathbb{R},\ y \in \mathbb{R}_+\}.
$$
 \subsection{Analytic Poincar\'e Inequalities}\label{subsec:poincare}
 We shall rely several times on the validity of
an analytic Poincar\'e inequality. A domain $D$ is said to be an analytic $p$-Poincar\'e domain if for $z_0 \in D$, there exists a constant $C^a_{poinc}(p,D,z_0)>0$ such that
\begin{equation}\label{eq:Poincare}
	\norm{F - F(z_0)}{L^p(D)} \le C^a_{poinc}(p,D,z_0)\norm{F'}{L^p(D)}
\end{equation}
for all $F\in \mathcal{O}(D)$, and $1\le p\le \infty$.

Such inequalities are studied in \cite{poincare}. Although (\ref{eq:Poincare}) features the point $z_0 \in D$, it turns out that whether or not the domain $D$ is an analytic Poincar\'e domain is independent of $z_0$. However \cite{poincare}, the best-possible constant $C^a_{poinc}(p,D,z_0)$ depends on the choice of $z_0$. 
Denote by $C_{poinc}(p,D)$ the usual Poincar\'e constant
of the domain $D$, i.e. the optimal constant $C$ such that
$$
	\norm{F - F_D}{L^p(D)} \le C\norm{\nabla F}{L^p(D)},
$$
where $F_D:=\frac{1}{|D|}\int_D F(z)dz$. Then we have the following estimate for $C^a_{poinc}(p,D,z_0)$:
\begin{lemma}\label{lem:analpoincconst}
	$$
		C^a_{poinc}(p,D,z_0)\le C_{poinc}(p,D)\cdot \left(1+\left(\frac{|D|}{\pi \dist(z_0,\partial D)^2}\right)^{1/p}\right).
	$$
\end{lemma}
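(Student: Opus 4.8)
The plan is to bound the analytic Poincar\'e constant by the classical one through a triangle inequality that trades the point evaluation $F(z_0)$ for the average $F_D=\frac{1}{|D|}\int_D F(z)\,dz$. Writing $F-F(z_0)=(F-F_D)+(F_D-F(z_0))$ and using that $F_D-F(z_0)$ is a constant function, I would first estimate
$$
\norm{F-F(z_0)}{L^p(D)}\le \norm{F-F_D}{L^p(D)} + |F_D-F(z_0)|\cdot|D|^{1/p}.
$$
The first summand is immediately controlled by the classical Poincar\'e inequality, $\norm{F-F_D}{L^p(D)}\le C_{poinc}(p,D)\norm{\nabla F}{L^p(D)}$. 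The whole game is therefore to bound the scalar $|F_D-F(z_0)|$ by the same right-hand side.

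For this I would exploit holomorphy through the area mean value property: since $u$ and $v$ are harmonic, for any radius $r$ with $B_r(z_0)\subset D$ one has $F(z_0)=\frac{1}{\pi r^2}\int_{B_r(z_0)}F(z)\,dz$. Subtracting the constant $F_D$ under the integral and applying H\"older's inequality on $B_r(z_0)$ (whose area is $\pi r^2$) yields
$$
|F(z_0)-F_D|\le \frac{1}{\pi r^2}\int_{B_r(z_0)}|F-F_D|\,dz\le (\pi r^2)^{-1/p}\norm{F-F_D}{L^p(B_r(z_0))}\le (\pi r^2)^{-1/p}\norm{F-F_D}{L^p(D)},
$$
where the exponent comes from $|B_r(z_0)|^{1/p'}/(\pi r^2)=(\pi r^2)^{-1/p}$ with $p'$ the conjugate exponent. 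Feeding this back and again invoking the classical Poincar\'e inequality produces the prefactor $C_{poinc}(p,D)\bigl(1+(|D|/(\pi r^2))^{1/p}\bigr)$ in front of $\norm{\nabla F}{L^p(D)}$.

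Two remaining points close the argument. First, to sharpen the estimate in $r$ I would let $r\uparrow \dist(z_0,\partial D)$, the largest radius for which $B_r(z_0)\subset D$, turning $\pi r^2$ into $\pi\dist(z_0,\partial D)^2$; since the optimal constant $C^a_{poinc}(p,D,z_0)$ does not depend on $r$, the bound passes to this supremal radius. Second, I must return from $\norm{\nabla F}{L^p(D)}$ to $\norm{F'}{L^p(D)}$, the quantity appearing in \eqref{eq:Poincare}: by the Cauchy-Riemann equations \eqref{eq:CR} the Jacobian of a holomorphic function is a conformal matrix, so pointwise $|\nabla F|=\sqrt{u_x^2+v_x^2}=|F'|$ and the two $L^p$ norms coincide.

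I expect the only genuinely delicate bookkeeping to be the normalization and matrix-norm convention implicit in the definition of $C_{poinc}(p,D)$, which must be chosen so that the pointwise identity $|\nabla F|=|F'|$ holds exactly; everything else reduces to the mean value property together with H\"older's inequality. There is no deep obstacle here, but care is needed so that the constant is not inadvertently inflated by a factor such as $\sqrt{2}$ from using the Frobenius rather than the operator norm on $\nabla F$.
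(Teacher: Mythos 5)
Your proposal is correct and follows essentially the same route as the paper's proof: trade $F(z_0)$ for the mean $F_D$ by the triangle inequality, then control the scalar discrepancy via the mean value property of $F$ on the inscribed disc $B_{\dist(z_0,\partial D)}(z_0)$ together with H\"older's inequality, yielding exactly the factor $\bigl(|D|/(\pi\,\dist(z_0,\partial D)^2)\bigr)^{1/p}$. The only differences are cosmetic (the paper works with $\|F_D-F_B\|_{L^p(D)}$ directly at $r=\dist(z_0,\partial D)$ rather than letting $r$ increase to it), and your remark about the $\nabla F$ versus $F'$ norm convention is a legitimate point of care that the paper passes over silently.
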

\begin{proof}
	In \cite[p. 365]{poincare} the case $p=1$ is shown; 
	the general case can be done analogously.
	Let $r:=\dist(z_0,\partial D)$ and consider the ball $B=B_r(z_0)$.
	By the mean-value property it holds that $F(z_0) = F_B$.
	Therefore we have
	$$
		\norm{F - F(z_0)}{L^p(D)}= \norm{F - F_B}{L^p(D)}.
	$$
	With this, the triangle inequality yields
	\begin{eqnarray*}
		\norm{F - F(z_0)}{L^p(D)} & \le & \norm{F - F_D}{L^p(D)}
		+\norm{F_D - F_B}{L^p(D)}.
	\end{eqnarray*}
	Now we observe that
	$$
		|F_B - F_D| \le \frac{1}{|B|}\int_B |F(z) - F_D|dz 
		\le \frac{1}{|B|}|B|^{1-1/p}\norm{F-F_D}{L^p(B)},
	$$
	where the last inequality follows from H\"older's inequality. 
	Now it remains to observe that $\|F-F_D\|_{L^p(B)}\le \|F-F_D\|_{L^p(D)}$
	and $|B|=\pi \dist(z_0,\partial D)^2$ to arrive at the desired result.
\end{proof}
Essentially, Lemma \ref{lem:analpoincconst} states that whenever
$z_0$ lies in a central location of $D$ (i.e. not too close to $\partial D$), the constant 
$C^a_{poinc}(p,D,z_0)$
can be controlled by the classical Poincar\'e constant $C_{poinc}(p,D)$ which is well-studied. For instance the following result 
is known \cite{payne1960optimal}.
\begin{theorem}\label{thm:poincconv}
	Suppose that $D\subset \mathbb{C}$ is a bounded, convex domain 
	with Lipschitz boundary. Then
	$$
		C_{poinc}(2,D)\le \frac{\mbox{diam}(D)}{\pi}.
	$$
\end{theorem}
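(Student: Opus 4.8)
The plan is to identify this as the Payne--Weinberger inequality and recast it spectrally. By the Rayleigh-quotient characterisation of the optimal Poincaré constant, the best $C$ in $\norm{F-F_D}{L^2(D)}\le C\norm{\nabla F}{L^2(D)}$ equals $\mu_1(D)^{-1/2}$, where $\mu_1(D)=\inf\{\norm{\nabla u}{L^2(D)}^2/\norm{u}{L^2(D)}^2:\ u\in H^1(D),\ \int_D u=0\}$ is the first nonzero Neumann eigenvalue. Writing a complex $F$ as $F_1+\i F_2$ splits both $\norm{F-F_D}{L^2(D)}^2$ and $\norm{\nabla F}{L^2(D)}^2$ additively, so it suffices to treat real-valued $u$. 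Thus the whole statement reduces to the eigenvalue bound $\mu_1(D)\ge \pi^2/\mbox{diam}(D)^2$, i.e. to proving $\int_D u^2\le (\mbox{diam}(D)/\pi)^2\int_D|\nabla u|^2$ for every real $u\in H^1(D)$ with $\int_D u=0$.

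The first main step is a bisection (``needle decomposition'') argument. I would repeatedly cut $D$ by straight lines into convex cells, each cut chosen by a sweeping/intermediate-value argument so that the two pieces carry equal $u$-mass, while steering the cut directions so that the cells become arbitrarily thin in the transverse direction. Passing to the limit yields a decomposition of $D$ into thin convex cells $D_i$, each of diameter at most $\mbox{diam}(D)$ and each satisfying $\int_{D_i}u=0$. Making this geometric decomposition fully rigorous (including the degenerate zero-mass cuts) is standard but technical.

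On each thin cell the problem collapses to one dimension. Parametrising $D_i$ along its long axis by $t\in[a_i,b_i]$ with $b_i-a_i\le\mbox{diam}(D)$, and letting $w(t)$ denote the length of the cross-section of $D_i$ at position $t$, convexity forces $w$ to be \emph{concave} (this is special to $\R^2$; in $\R^n$ one only gets $w^{1/(n-1)}$ concave). In the thin-cell limit $u$ depends essentially only on $t$ and $\int_{D_i}|\nabla u|^2\ge\int_{D_i}|\partial_t u|^2$, so the estimate on $D_i$ follows from the weighted one-dimensional Wirtinger inequality
\[
\int_{a_i}^{b_i} f^2\,w\,dt\le \frac{(b_i-a_i)^2}{\pi^2}\int_{a_i}^{b_i}(f')^2\,w\,dt,\qquad\text{whenever }\int_{a_i}^{b_i}f\,w\,dt=0,
\]
whose sharp constant $(\mathrm{length}/\pi)^2$ is attained by the cosine when $w$ is constant. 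Summing over $i$, using $b_i-a_i\le\mbox{diam}(D)$ and $\sum_i\int_{D_i}|\nabla u|^2=\int_D|\nabla u|^2$, produces the claimed bound $\int_D u^2\le(\mbox{diam}(D)/\pi)^2\int_D|\nabla u|^2$.

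I expect the genuine obstacle to be the weighted one-dimensional inequality: one must show that replacing the constant weight by a concave weight $w$ does \emph{not} enlarge the sharp constant $(\mathrm{length}/\pi)^2$. Equivalently, this is a lower bound $\pi^2/(b_i-a_i)^2$ on the first nonzero eigenvalue of the Sturm--Liouville problem $-(w f')'=\lambda\,w f$ with Neumann boundary conditions, which is delicate and is precisely the point at which the original argument of \cite{payne1960optimal} later required repair. I would establish it by a comparison/monotonicity argument against the cosine extremal of the unweighted problem, exploiting the concavity of $w$ to bound the weighted Rayleigh quotient from below.
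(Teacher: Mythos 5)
The paper does not prove this statement at all: it is quoted as a known result with a citation to \cite{payne1960optimal}, so the ``paper's proof'' is precisely the Payne--Weinberger argument. Your sketch is a faithful reconstruction of exactly that argument (Rayleigh-quotient reduction to the first nonzero Neumann eigenvalue, needle bisection into thin convex cells of zero mean, and the weighted one-dimensional Wirtinger inequality with concave weight), and you correctly single out the weighted step as the delicate one --- indeed the step whose original treatment in \cite{payne1960optimal} was later repaired by Bebendorf --- so your proposal coincides with the proof the paper implicitly relies on.
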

For non-convex domains the determination of the optimal Poincar\'e constant is more difficult. 
For the annulus $B_{r,s}(z)$ the following result is known.
\begin{theorem}\label{thm:poincann}
	Suppose that $D= B_{r,s}(z)$. Then there exists a uniform constant $c>0$ such that
	$$
		C_{poinc}(2,D)\le c\cdot s.
	$$
\end{theorem}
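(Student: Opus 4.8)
The plan is to exploit the rotational symmetry of the annulus through a Fourier decomposition in the angular variable, after two standard reductions. First, since translating the domain changes neither side of the Poincar\'e inequality, I would take $z=0$. Second, I would record that the Poincar\'e constant scales linearly under dilations: if $D'=\lambda D$ and $G(w):=F(w/\lambda)$, a change of variables gives $\norm{G-G_{D'}}{L^2(D')}=\lambda\,\norm{F-F_D}{L^2(D)}$ while $\norm{\nabla G}{L^2(D')}=\norm{\nabla F}{L^2(D)}$, so that $C_{poinc}(2,\lambda D)=\lambda\,C_{poinc}(2,D)$. Since $B_{r,s}(0)=s\cdot B_{r/s,1}(0)$, it therefore suffices to bound $C_{poinc}(2,B_{r,1}(0))$ by a constant \emph{independent of} $r\in(0,1)$; multiplying by $s$ then yields the claim.

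Writing $F$ in polar coordinates and expanding $F(\rho,\theta)=\sum_{n\in\Z}a_n(\rho)e^{\i n\theta}$, Parseval in $\theta$ turns both sides into weighted radial sums,
\begin{align*}
	\norm{F-F_D}{L^2(D)}^2 &= 2\pi\int_r^1\Big(|a_0(\rho)-F_D|^2+\sum_{n\neq0}|a_n(\rho)|^2\Big)\rho\,d\rho,\\
	\norm{\nabla F}{L^2(D)}^2 &= 2\pi\int_r^1\Big(\sum_{n}|a_n'(\rho)|^2\,\rho+\sum_n \frac{n^2}{\rho}|a_n(\rho)|^2\Big)d\rho,
\end{align*}
where one checks that $F_D=\big(\int_r^1\rho\,d\rho\big)^{-1}\int_r^1 a_0(\rho)\,\rho\,d\rho$ is exactly the weighted radial mean of $a_0$. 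The bound then splits into an angular part ($n\neq0$) and a radial part ($n=0$). The angular part is the easy one: on $(0,1]$ one has $n^2/\rho\ge\rho$ whenever $n\neq0$ (since $n^2\ge1\ge\rho^2$), hence $\int_r^1|a_n|^2\rho\,d\rho\le\int_r^1\tfrac{n^2}{\rho}|a_n|^2\,d\rho$, and summing over $n\neq0$ shows the whole angular part is dominated by $\norm{\nabla F}{L^2(D)}^2$ with constant $1$, uniformly in $r$.

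The radial part reduces to the one-dimensional weighted Poincar\'e inequality $\int_r^1|a_0-F_D|^2\rho\,d\rho\le K(r)\int_r^1|a_0'|^2\rho\,d\rho$, and the crux is to prove it with $K(r)$ \emph{uniform in} $r$. I would use the variance identity $\int_r^1|a_0-F_D|^2\rho\,d\rho=\frac{1}{2W}\int_r^1\int_r^1|a_0(\rho)-a_0(\sigma)|^2\rho\sigma\,d\rho\,d\sigma$ with $W=\int_r^1\rho\,d\rho$, followed by the Cauchy--Schwarz estimate $|a_0(\rho)-a_0(\sigma)|^2\le|\log(\rho/\sigma)|\int_r^1|a_0'|^2\tau\,d\tau$. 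This yields the explicit kernel $K(r)=\frac{1}{2W}\int_r^1\int_r^1|\log(\rho/\sigma)|\,\rho\sigma\,d\rho\,d\sigma$, which I would show is bounded on all of $(0,1)$: it tends to $0$ as $r\to1$, is at most $\tfrac14$ in the limit $r\to0$ (using $\int_0^1(-\log\rho)\rho\,d\rho=\tfrac14$), and is continuous in between.

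The main obstacle is precisely this radial mode in the two degenerate regimes, namely the thinning annulus $r\to1$ and the vanishing hole $r\to0$, where the radial weight $\rho$ collapses at the inner boundary. A crude bound that extracts a factor $1/r$ from $\int d\tau/\tau$ would blow up as $r\to0$; the point is that keeping the $\log(\rho/\sigma)$ weight inside the double integral renders $K(r)$ integrable and uniformly bounded. Once this is in place, adding the angular and radial estimates gives $\norm{F-F_D}{L^2(D)}^2\le(1+\sup_{r}K(r))\,\norm{\nabla F}{L^2(D)}^2$, i.e. a bound on $C_{poinc}(2,B_{r,1}(0))$ independent of $r$; undoing the dilation $B_{r,s}(z)=s\cdot B_{r/s,1}(0)$ then produces the asserted estimate $C_{poinc}(2,B_{r,s}(z))\le c\cdot s$ with $c$ universal.
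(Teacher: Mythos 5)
Your proof is correct, and after the shared first move it departs genuinely from the paper's argument. Both proofs begin with the same scaling reduction $C_{poinc}(2,B_{r,s}(z))=s\cdot C_{poinc}(2,B_{r/s,1}(0))$, so that everything hinges on bounding $h(\tau)=C_{poinc}(2,B_{\tau,1}(0))$ uniformly for $\tau\in(0,1)$. The paper does this non-constructively: it cites the continuous dependence of the Poincar\'e constant on the domain for $\tau\in(0,1)$, plus two further references for the continuous extension of $h$ to the degenerate endpoints $\tau=1$ (thin annulus) and $\tau=0$ (vanishing hole), and concludes boundedness by compactness of $[0,1]$. You instead prove the uniform bound directly by separating angular Fourier modes: for $n\neq0$ the pointwise comparison $\rho\le n^2/\rho$ on $(0,1]$ kills the angular part with constant $1$ uniformly in $\tau$, and the radial mode reduces to a weighted one-dimensional Poincar\'e inequality whose constant $K(\tau)$ you exhibit as an explicit double integral via the variance identity and the Cauchy--Schwarz bound $|a_0(\rho)-a_0(\sigma)|^2\le|\log(\rho/\sigma)|\int|a_0'|^2\tau\,d\tau$; your endpoint checks are sound ($K(\tau)\to0$ as $\tau\to1$ since $|\log(\rho/\sigma)|\le(1-\tau)/\tau$ there, and $K(0^+)\le\tfrac14$ from $\int_0^1(-\log\rho)\rho\,d\rho=\tfrac14$ together with $|\log(\rho/\sigma)|\le|\log\rho|+|\log\sigma|$). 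What each approach buys: the paper's proof is short but leans on three external results, yields no explicit constant, and the endpoint citations (spectral convergence for thin annuli and for domains with vanishing holes) are themselves nontrivial; your proof is self-contained, quantitative (it gives $C_{poinc}(2,B_{\tau,1}(0))^2\le 1+\sup_\tau K(\tau)$, hence an explicit universal $c$), and handles both degenerate regimes within the same elementary computation. The only points worth flagging, neither a gap: you should justify the mode decomposition and the absolute continuity of $a_0$ by density of smooth functions in $W^{1,2}$ of the annulus (standard for these Lipschitz domains), and note that the crude estimate you warn against (pulling out $\int d\tau/\tau\approx\log(1/\tau)$ globally) would in fact still be harmless here, since it degrades only logarithmically --- though keeping the $\log(\rho/\sigma)$ kernel inside the integral, as you do, is cleaner and gives the uniform bound outright.
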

\begin{proof}
 	By a scaling argument it is easily seen that
 	$$
 		C_{poinc}(2,B_{r,s}(z)) = s \cdot C_{poinc}(2,B_{r/s,1}(0)).
 	$$
 	The function $h: \tau \mapsto C_{poinc}(2,B_{\tau,1}(0))$ is continuous on 
 	$(0,1)$ because the Poincar\'e constant depends continuously on the domain
 	\cite{grebenkov2013geometrical}. In \cite{gottlieb1985eigenvalues} it is shown 
 	that the function $h$ extends continuously to the endpoint $\tau = 1$ and in 
 	\cite{hempel2006lowest} it is shown that the function $h$ extends continuously to the endpoint
 	$\tau = 0$. Therefore, $h$ is continuous on the closed interval $[0,1]$, and hence bounded, 
 	which proves the statement.
\end{proof}
For more general domains which arise as a diffeomorphic image of a convex domain or an annulus, one can obtain estimates on the Poincar\'e constant
by studying the Jacobian of the diffeomorphism but in the present paper
we are content with knowing the Poincar\'e constant on convex domains and on annuli.
\subsection{Sobolev Trace Inequalities}\label{subsec:trace}
In what follows, we will consider inequalities involving the $L^p$-norm of functions on the piecewise smooth boundary of a bounded domain $D \subset \mathbb{C}$. We define it as
$$
	\|F\|_{L^p(\partial D)}:= \Big( \int_a^b |F(\gamma(t))|^p \, |\gamma'(t)| dt \Big)^{1/p},
$$
where $\gamma:[a,b) \to \partial D$ can be any bijective parameterization of $\partial D$.

The Sobolev trace inequality \cite{evans} provides an upper bound for this norm, which will be important for our purposes:
\begin{theorem}\label{thm:trace}
	Suppose that $D\subset \mathbb{C}$ is a bounded domain with Lipschitz boundary $\partial D$. Then there exists a constant $C_{trace}(p,D)$
	with 
	$$
		\norm{F }{ L^p(\partial D)}\le C_{trace}(p,D)\norm{F}{W^{1,p}(D)}.
	$$
\end{theorem}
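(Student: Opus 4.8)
The plan is to prove the inequality first for functions $F \in C^1(\overline{D})$ and then to extend it to all of $W^{1,p}(D)$ by density, defining the trace of a general Sobolev function as the resulting limit. Since $\partial D$ is Lipschitz, the first step is a localization: cover $\overline{D}$ by finitely many open sets $U_0, U_1, \dots, U_N$, where $U_0 \subset\subset D$ stays away from the boundary and each $U_m$ with $m \ge 1$ is a coordinate patch in which, after a rigid rotation, $\partial D \cap U_m$ is the graph $\{(x,\gamma_m(x))\}$ of a Lipschitz function $\gamma_m$ with $D \cap U_m = \{y < \gamma_m(x)\} \cap U_m$. Fixing a partition of unity $\{\zeta_m\}$ subordinate to this cover, it suffices to bound $\norm{\zeta_m F}{L^p(\partial D \cap U_m)}$ for each $m \ge 1$, since $\zeta_0 F$ contributes nothing on the boundary.

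In a single boundary patch I would flatten the boundary by the bi-Lipschitz shear $\Phi_m(x,y) = (x, y - \gamma_m(x))$, which maps $D \cap U_m$ into the lower half-plane $\{y < 0\}$ and $\partial D \cap U_m$ into $\{y=0\}$. Writing $\widetilde{F} = F \circ \Phi_m^{-1}$ and choosing a smooth cutoff $\eta$ with $\eta(0) = 1$ and $\eta(y) = 0$ for $y \le -\kappa$, the fundamental theorem of calculus gives the pointwise identity
$$|\widetilde{F}(x,0)|^p = -\int_{-\infty}^0 \partial_y\big(\eta(y)\,|\widetilde{F}(x,y)|^p\big)\, dy.$$
Expanding the $y$-derivative produces a term $\eta'(y)|\widetilde{F}|^p$ together with $p\,\eta(y)|\widetilde{F}|^{p-1}\partial_y|\widetilde{F}|$; estimating the second term by Young's inequality in the form $|\widetilde{F}|^{p-1}|\partial_y\widetilde{F}| \le \tfrac{p-1}{p}|\widetilde{F}|^p + \tfrac1p|\partial_y\widetilde{F}|^p$ and integrating in $x$ yields
$$\int |\widetilde{F}(x,0)|^p\, dx \le C\big(\norm{\widetilde{F}}{L^p}^p + \norm{\nabla\widetilde{F}}{L^p}^p\big).$$
Undoing the change of variables and summing over the patches, with the derivatives of the $\zeta_m$ absorbed into the constant, gives the global bound with some $C_{trace}(p,D)$.

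The main obstacle is the merely Lipschitz regularity of $\partial D$, which enters at two points. First, the surface measure on $\partial D$ (defined in the statement via the weight $|\gamma'(t)|$) and the area measure must transform correctly under $\Phi_m$: here one invokes Rademacher's theorem, so that $\gamma_m$ is differentiable almost everywhere with $|\gamma_m'| \in L^\infty$, whence $D\Phi_m$ is bounded a.e. with Jacobian determinant $1$, and the $L^p$-norms of $F, \nabla F$ are comparable to those of $\widetilde{F}, \nabla\widetilde{F}$ with constants depending only on $\mathrm{Lip}(\gamma_m)$. Second, the density of $C^1(\overline{D})$ in $W^{1,p}(D)$ is itself nontrivial for Lipschitz domains; it relies on the segment (cone) property such domains enjoy, which permits a translation-and-mollification approximation in the Sobolev norm. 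Once density is established, the inequality passes to the limit along any approximating sequence, since the $W^{1,p}(D)$-norm controls the right-hand side and the bound just proved for smooth functions controls the convergence of the boundary traces on the left.
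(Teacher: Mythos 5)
The paper offers no proof of this theorem at all --- it is quoted from the literature with a citation to Evans --- and your argument is exactly the standard proof found in that reference (partition of unity, bi-Lipschitz flattening of the Lipschitz graph, fundamental theorem of calculus with a cutoff plus Young's inequality, Rademacher for the a.e.\ chain rule and measure comparison, then density of $C^1(\overline{D})$ in $W^{1,p}(D)$), and it is correct for $1\le p<\infty$. The only blemish is a harmless sign slip: with the patch mapped into the lower half-plane $\{y<0\}$, the fundamental theorem of calculus gives $|\widetilde{F}(x,0)|^p = +\int_{-\infty}^0 \partial_y\bigl(\eta(y)\,|\widetilde{F}(x,y)|^p\bigr)\,dy$, which does not affect any of the subsequent estimates.
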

The next result provides concrete estimates of the trace constant for discs and annuli. It says that the trace constant behaves nicely for annuli that are not too thin. 
\begin{theorem}\label{thm:tracann}
	There exists a continuous function $\rho:[0,1)\to \mathbb{R}$ 
	with $\lim_{\tau \to 1_-}\rho(\tau) = \infty$ such that 	%
	\begin{align*}
		C_{trace}(2,B_{r,s}(z))&\le \rho(r/s)\cdot (s^{1/2}+ s^{-1/2}).
	\end{align*}
\end{theorem}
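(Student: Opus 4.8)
The plan is to reduce everything to a single normalized annulus by an affine scaling, define $\rho$ as the trace constant of that normalized annulus, and then verify its two required properties (continuity on $[0,1)$ and blow-up at $1$) separately. This parallels the structure of the proof of Theorem \ref{thm:poincann}.

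First I would carry out the scaling reduction. Fix $s>0$ and let $\tilde D := B_{r/s,1}(0)$, $D := B_{r,s}(z)$, related by the affine map $w \mapsto z+sw$; for $F\in W^{1,2}(D)$ set $G(w):=F(z+sw)$. A change of variables records how the three relevant quantities transform: the boundary arclength element scales by $s$, the area element by $s^2$, and $\nabla G$ picks up a factor $s$ by the chain rule, so that
$$\|F\|_{L^2(\partial D)} = s^{1/2}\|G\|_{L^2(\partial \tilde D)}, \quad \|F\|_{L^2(D)} = s\,\|G\|_{L^2(\tilde D)}, \quad \|\nabla F\|_{L^2(D)} = \|\nabla G\|_{L^2(\tilde D)}.$$
Applying the trace inequality of Theorem \ref{thm:trace} on $\tilde D$ to $G$, substituting these relations, and absorbing the resulting powers $s^{-1/2}$ and $s^{1/2}$ into the single factor $(s^{1/2}+s^{-1/2})$ (using that all terms are nonnegative), I expect to obtain
$$C_{trace}(2, B_{r,s}(z)) \le C_{trace}\big(2, B_{r/s,1}(0)\big)\cdot (s^{1/2}+s^{-1/2}).$$
This motivates setting $\rho(\tau) := C_{trace}(2, B_{\tau,1}(0))$, which is finite for each $\tau\in[0,1)$ by Theorem \ref{thm:trace}, since every such annulus is a bounded Lipschitz domain (at $\tau=0$ the inner boundary degenerates to a point, which is removable for Sobolev functions, so $\rho(0)=C_{trace}(2,B_1(0))<\infty$).

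Next I would establish that $\rho$ is continuous on $[0,1)$. As for the Poincar\'e constant in Theorem \ref{thm:poincann}, the trace constant depends continuously on the geometry of the domain, so $\tau\mapsto C_{trace}(2,B_{\tau,1}(0))$ is continuous on $(0,1)$ and extends continuously to $\tau=0$, where the annuli converge to the unit disc. I would invoke the same continuous-dependence results used in the proof of Theorem \ref{thm:poincann}. For the blow-up, I would use the explicit test function $G\equiv 1$ on $B_{\tau,1}(0)$: here $\|G\|_{L^2(\partial B_{\tau,1}(0))}^2 = 2\pi(1+\tau)$ while $\|G\|_{W^{1,2}(B_{\tau,1}(0))}=\|G\|_{L^2}=(\pi(1-\tau^2))^{1/2}$, so the Rayleigh quotient equals $\sqrt{2/(1-\tau)}$, giving $\rho(\tau)\ge \sqrt{2/(1-\tau)}\to\infty$ as $\tau\to 1_-$.

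The main obstacle is the continuity claim, and in particular justifying continuous dependence of the trace constant on the annular parameter near the degenerate endpoint $\tau=0$, where one must argue that the shrinking inner boundary contributes negligibly and that the puncture is Sobolev-removable. The scaling reduction and the blow-up estimate are routine once the normalization is in place.
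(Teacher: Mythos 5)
Your proposal is correct and takes essentially the same route as the paper: the paper's proof is exactly your affine rescaling to the normalized annulus (stated there as the two cases $s\ge 1$ and $s<1$, which your unified factor $(s^{1/2}+s^{-1/2})$ subsumes), followed by the observation that $C_{trace}(2,B_{\tau,1})<\infty$ for $\tau\in[0,1)$, implicitly taking $\rho(\tau)=C_{trace}(2,B_{\tau,1}(0))$. The one step you flag as the main obstacle --- continuity of $\tau\mapsto C_{trace}(2,B_{\tau,1}(0))$, for which the eigenvalue-perturbation references behind Theorem \ref{thm:poincann} do not literally apply, since they concern Neumann eigenvalues rather than trace constants --- is in fact dispensable: the theorem only asserts the existence of a \emph{continuous majorant}, so local boundedness of the trace constant on compact subintervals of $[0,1)$ (uniform Lipschitz character of the annuli there, plus your correct removability remark at $\tau=0$) suffices, and the divergence at $1$ can always be enforced by adding, say, $(1-\tau)^{-1}$ to the majorant; your test-function computation $C_{trace}(2,B_{\tau,1})\ge\sqrt{2/(1-\tau)}$ is a correct bonus showing the blow-up is genuine, which the paper itself does not prove.
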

\begin{proof}
By a scaling argument, one can verify that
\begin{align*}
	C_{trace}(2,B_{r,s}(z)) &\leq s^{1/2} \cdot C_{trace}(2,B_{r/s,1}(z)), \quad \mbox{for } s \geq 1, \\
	C_{trace}(2,B_{r,s}(z)) &\leq s^{-1/2} \cdot C_{trace}(2,B_{r/s,1}(z)), \quad \mbox{for } s < 1.
\end{align*}
The statement then follows by noting that  $ C_{trace}(2,B_{\tau,1}(z))<
\infty$ for $\tau \in [0,1)$. 
\end{proof}
\subsection{Boundary Values of Holomorphic Functions}\label{subsec:bv}
Another key fact we shall use is that the $L^p$-norm of a holomorphic
function on a simply connected domain is dominated by its $L^p$-norm
on the boundary. 
\begin{theorem}\label{thm:boundvals}
	Suppose that $F\in \mathcal{O}(D)$, where $D\subset \mathbb{C}$
	is a bounded and simply connected domain with smooth boundary. Then there
	exists a constant $C_{bound}(p,D)>0$ such that 
	$$
		\norm{F}{L^p(D)} \le C_{bound}(p,D)\norm{F}{L^p(\partial D)}
	$$
	for all bounded functions $F\in \mathcal{O}(D)$.
\end{theorem}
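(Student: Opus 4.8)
The plan is to reduce the statement to the classical case of the unit disc $\mathbb{D}$ via a conformal change of variables, where the inequality is an easy consequence of the subharmonicity of $|F|^p$. Let $\phi:\mathbb{D}\to D$ be a Riemann map, which exists since $D$ is simply connected and (being bounded) not all of $\C$. Because $\partial D$ is smooth, the Kellogg--Warschawski theorem guarantees that $\phi$ extends to a $C^1$ diffeomorphism of the closures with $\phi'$ continuous and nonvanishing on $\overline{\mathbb{D}}$; in particular there are constants $0<m\le M<\infty$ with $m\le |\phi'(w)|\le M$ for all $w\in\overline{\mathbb{D}}$. Setting $G:=F\circ\phi\in\mathcal{O}(\mathbb{D})$, note that $G$ is bounded (since $F$ is), hence lies in the Hardy space $H^p(\mathbb{D})$, so that $G$ has nontangential boundary values a.e.\ on $\partial\mathbb{D}$ and finite integral means.

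Next I would carry out the disc estimate. Since $G\in\mathcal{O}(\mathbb{D})$, the function $|G|^p$ is subharmonic, so the circular means $r\mapsto \frac{1}{2\pi}\int_0^{2\pi}|G(re^{\i\theta})|^p\,d\theta$ are nondecreasing in $r$ and bounded above by $\int_0^{2\pi}|G(e^{\i\theta})|^p\,d\theta$. Integrating in polar coordinates gives directly
\begin{equation*}
	\int_{\mathbb{D}}|G(w)|^p\,dA(w)=\int_0^1\!\!\int_0^{2\pi}|G(re^{\i\theta})|^p\,r\,d\theta\,dr\le \tfrac12\int_0^{2\pi}|G(e^{\i\theta})|^p\,d\theta.
\end{equation*}
I would then transfer both sides through $\phi$: by the area change-of-variables formula $\norm{F}{L^p(D)}^p=\int_{\mathbb{D}}|G|^p|\phi'|^2\,dA\le M^2\int_{\mathbb{D}}|G|^p\,dA$, while on the boundary $\norm{F}{L^p(\partial D)}^p=\int_0^{2\pi}|G(e^{\i\theta})|^p|\phi'(e^{\i\theta})|\,d\theta\ge m\int_0^{2\pi}|G(e^{\i\theta})|^p\,d\theta$. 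Chaining these three inequalities yields $\norm{F}{L^p(D)}^p\le \frac{M^2}{2m}\norm{F}{L^p(\partial D)}^p$, so the claim holds with $C_{bound}(p,D)=(M^2/2m)^{1/p}$.

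I expect the main obstacle to be the boundary regularity of the conformal map, i.e.\ the fact that $\phi'$ extends continuously and stays bounded away from $0$ and $\infty$ up to $\partial\mathbb{D}$; this is exactly where the smoothness of $\partial D$ enters, and it is what makes the weights $|\phi'|$ and $|\phi'|^2$ harmless. A more self-contained alternative that avoids the conformal map is to work on $D$ directly: since $|F|^p$ is a bounded subharmonic function, it is majorized by the harmonic function $h$ with the same boundary values, $h(z)=\int_{\partial D}P_D(z,\zeta)|F(\zeta)|^p\,ds(\zeta)$ for the Poisson kernel $P_D$ of $D$; integrating over $D$ and using Fubini gives $\int_D|F|^p\,dA\le \big(\sup_{\zeta\in\partial D}\int_D P_D(z,\zeta)\,dA(z)\big)\int_{\partial D}|F|^p\,ds$, and the supremum is finite because $\zeta\mapsto\int_D P_D(\cdot,\zeta)\,dA$ is continuous on the compact set $\partial D$ for smooth domains. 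Either route reduces the problem to standard facts from Hardy space theory and potential theory; the only genuinely nontrivial input is the regularity of $\phi$ up to the boundary (respectively the control of the Poisson kernel), which is classical for smooth $\partial D$.
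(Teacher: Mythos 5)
Your proof is correct and takes essentially the same route as the paper's: reduce to the unit disc via a Riemann map and apply the Hardy-space/subharmonicity estimate $\int_{\mathbb{D}}|G|^p\,dA \le \tfrac12\int_0^{2\pi}|G(e^{\i\theta})|^p\,d\theta$ there. The only difference is that you spell out, via Kellogg--Warschawski, the boundary regularity of $\phi$ and the resulting weights $|\phi'|$, $|\phi'|^2$, which the paper leaves implicit in its remark that ``the general case can be handled using the Riemann mapping theorem'' and that $C_{bound}(p,D)$ depends on bounds for the Jacobian of the Riemann map.
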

\begin{proof}
	We assume without loss of generality that $D=B_1(0)$. The general 
	case can be handled using the Riemann mapping theorem. 
	We shall make use of the Hardy space $H^p$, 
	consisting of all functions $F\in \mathcal{O}(B_1(0))$ with
	finite $H^p$-norm, defined by
	$$
		\norm{F}{H^p}^p:=\sup_{0<r<1}\frac{1}{2\pi}\int_0^{2\pi}
		|F(r \cdot e^{\i\varphi})|^p d\varphi.
	$$
	It is well-known (see for instance \cite{katznelson})
	that any $F\in H^p$ can be extended to the boundary $\partial B_1(0)$
	and that
	\begin{equation}\label{eq:hardy}
		2\pi \|F\|_{H^p}^p=\|F\|_{L^p(\partial D)}^p.
	\end{equation}
	We further note that 
	$$
		\frac{1}{2\pi}\norm{F}{L^p(B_1(0))}^p
		= \frac{1}{2\pi}\int_0^1\int_0^{2\pi}
		|F(r \cdot e^{\i\varphi})|^p rdrd\varphi
		\le \norm{F}{H^p}^p.
	$$
	Combining this result with (\ref{eq:hardy}) yields the desired result.
\end{proof}
For discs $B_r(z)$ a simple scaling argument leads to the following result.
\begin{theorem}\label{thm:bounddisc}
	For all 
	$r>0$, $z\in \mathbb{C}$ and
	 $D=B_r(z)$ we have $C_{bound}(p,D)\le r^{1/p}$.
\end{theorem}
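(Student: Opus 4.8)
The plan is to reduce the general disc to the unit disc by a dilation and then track how the two norms rescale. The proof of Theorem~\ref{thm:boundvals}, applied to $D=B_1(0)$, in fact yields the normalized bound $C_{bound}(p,B_1(0))\le 1$: combining the inequality $\frac{1}{2\pi}\norm{F}{L^p(B_1(0))}^p\le\norm{F}{H^p}^p$ with the identity $2\pi\norm{F}{H^p}^p=\norm{F}{L^p(\partial B_1(0))}^p$ gives $\norm{F}{L^p(B_1(0))}\le\norm{F}{L^p(\partial B_1(0))}$. This normalized estimate is the only input I need.

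Given a bounded $F\in\mathcal{O}(B_r(z))$, I would introduce the rescaled function $G(w):=F(z+rw)$, which is holomorphic and bounded on $B_1(0)$. The substitution $\zeta=z+rw$ is a conformal dilation, so its effect on each of the two norms can be computed separately. For the area norm, the planar Jacobian of $w\mapsto z+rw$ is $r^2$, whence $\norm{F}{L^p(B_r(z))}^p=r^2\norm{G}{L^p(B_1(0))}^p$. For the boundary norm, the arc-length element of $\partial B_r(z)$ is $r$ times that of $\partial B_1(0)$, so $\norm{F}{L^p(\partial B_r(z))}^p=r\,\norm{G}{L^p(\partial B_1(0))}^p$.

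Applying the unit-disc estimate to $G$ and chaining these three relations then yields
\[
    \norm{F}{L^p(B_r(z))}^p = r^2\,\norm{G}{L^p(B_1(0))}^p \le r^2\,\norm{G}{L^p(\partial B_1(0))}^p = r\,\norm{F}{L^p(\partial B_r(z))}^p,
\]
and taking $p$-th roots gives $\norm{F}{L^p(B_r(z))}\le r^{1/p}\,\norm{F}{L^p(\partial B_r(z))}$, i.e. $C_{bound}(p,B_r(z))\le r^{1/p}$. There is no genuine obstacle here; the only point requiring care is that the area measure scales like $r^2$ while the boundary measure scales like $r$, so that the mismatched powers collapse precisely to the claimed $r^{2/p-1/p}=r^{1/p}$. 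The estimate is moreover translation invariant, consistent with the free center $z$ appearing in the statement.
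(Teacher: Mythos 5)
Your proof is correct and coincides with the paper's intended argument: the paper derives Theorem~\ref{thm:bounddisc} from the unit-disc case in Theorem~\ref{thm:boundvals} via ``a simple scaling argument'' that it does not write out, and your dilation $G(w)=F(z+rw)$ with the Jacobian factor $r^2$ for the area norm against $r$ for the arc-length norm is precisely that argument made explicit, collapsing to the claimed $r^{1/p}$.
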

For more general simply connected domains the constant $C_{bound}(p,D)$
depends on upper and lower bounds on the Jacobian of the Riemann mapping 
from $D$ to $B_1(0)$. 

\subsection{Stable Point Evaluations}\label{subsec:pointev}
Given a function $G\in L^p(D)$, the proof of our main result will
require us to pick a point $z\in D$ with a small sampling constant
which is defined as follows.
\begin{definition}\label{def: Csamp}
	Let $D$ be a domain and $G\in L^p(D)$. Then we define, for 
	$z_0\in D$ and $1\le p\le \infty$ the \emph{sampling constant}
	$$
		C_{samp}(p,D,z_0,G):=
		\inf \{C>0:\ \norm{G(z_0)}{L^p(D)}\le C\norm{G}{L^p(D)}\}.
	$$
\end{definition}

To control the constant $C(z_0,p,D_+,(D_0^i)_{i=1}^l)$ in our main result Theorem \ref{prop:main},  it is necessary to control $C_{samp}(p,D_+,z_0,|F_2|-|F_1|)$ and $C^a_{poinc}(p,D_+,z_0)$ simultaneously. 

The purpose of this subsection is to show that this can indeed be achieved for general domains $D$ and functions  $G\in L^p(D)$.

We start with the following lemma which shows that there exist ``many'' points with a given sampling constant.
\begin{lemma}\label{lem:z0}
	Suppose that $D\subset \mathbb{C}$ is a domain
	and let $G\in L^p(D)$ for $1\le p\le \infty$. For $C>1$ 
	we denote 
	$$
		D_C(G):=\left\{z_0\in D:\ |G(z_0)| |D|^{1/p}\le C\norm{G}{L^p(D)}\right\}. 
	$$
	Then
	$$
		|D_C(G)|\geq |D|\cdot \left(1-\frac{1}{C^p}\right).
	$$
\end{lemma}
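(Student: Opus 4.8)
The plan is to recognize this as a Chebyshev (Markov) inequality estimate applied to $|G|^p$. First I would pass to the complementary ``bad'' set
$$
	B := D\setminus D_C(G) = \left\{z_0\in D:\ |G(z_0)|\,|D|^{1/p} > C\norm{G}{L^p(D)}\right\},
$$
since the claimed bound $|D_C(G)|\geq |D|(1-1/C^p)$ is equivalent to showing $|B|\le |D|/C^p$. By definition of $B$, every $z_0\in B$ satisfies the pointwise lower bound $|G(z_0)|^p > C^p\norm{G}{L^p(D)}^p/|D|$, so integrating this over $B$ and bounding the integral over $B$ by the integral over all of $D$ gives
$$
	\norm{G}{L^p(D)}^p \geq \int_B |G(z)|^p\,dz \geq |B|\cdot \frac{C^p\,\norm{G}{L^p(D)}^p}{|D|}.
$$
Assuming $\norm{G}{L^p(D)}\neq 0$, I would divide through by $\norm{G}{L^p(D)}^p$ to obtain $1\geq |B|\,C^p/|D|$, i.e. $|B|\le |D|/C^p$, and therefore $|D_C(G)| = |D|-|B| \geq |D|(1-1/C^p)$, as desired.

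The only care needed is in the degenerate and endpoint cases, which is where I expect the (very mild) subtlety to lie rather than in the main estimate. If $\norm{G}{L^p(D)}=0$, then $G=0$ almost everywhere, the defining inequality of $D_C(G)$ holds for a.e.\ $z_0$, and $D_C(G)$ has full measure, consistent with the bound. For $p=\infty$ one has $|D|^{1/p}=1$ and the factor $1-1/C^p$ should be read as its limit $1$; since $|G(z_0)|\le \norm{G}{L^\infty(D)} < C\norm{G}{L^\infty(D)}$ for a.e.\ $z_0$ (using $C>1$), the set $D_C(G)$ again has full measure and the conclusion holds trivially.

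The hard part here is essentially nonexistent: this is a one-line Chebyshev argument, and no holomorphy, connectivity, or boundary regularity of $D$ is used, so the statement holds for arbitrary measurable $G$ on any domain $D$ of finite area. The lemma is included precisely because it supplies the abundance of ``good'' sampling points $z_0$ that the later simultaneous control of $C_{samp}(p,D_+,z_0,|F_2|-|F_1|)$ and $C^a_{poinc}(p,D_+,z_0)$ (via Lemma \ref{lem:analpoincconst}) will exploit.
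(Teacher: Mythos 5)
Your proof is correct and is essentially the paper's own argument: both are the Chebyshev--Markov estimate obtained by integrating the pointwise lower bound $|G|^p > C^p\norm{G}{L^p(D)}^p/|D|$ over the complement $D\setminus D_C(G)$ and comparing with $\norm{G}{L^p(D)}^p$. Your explicit treatment of the degenerate cases ($\norm{G}{L^p(D)}=0$ and $p=\infty$), which the paper's proof glosses over, is a welcome bit of extra care but does not change the approach.
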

\begin{proof}
	We compute
	$$
		\int_{D\setminus D_C(G)}|G(x)|^p dx
		+ \int_{D_C(G)}|G(x)|^p dx
		= \norm{G}{L^p(D)}^p 
	$$
	By the definition of $D_C(G)$ we have that
	$$
		|G(x)|^p> \frac{C^p}{|D|}\norm{G}{L^p(D)}^p \mbox{  for all }x\in D\setminus D_C(G),
	$$
	and this implies that
	$$
		|D\setminus D_C(G)| \frac{C^p}{|D|} \norm{G}{L^p(D)}^p 
		+\int_{D_C(G)}|G(x)|^p dx \le \norm{G}{L^p(D)}^p.
	$$
	Consequently,
	$$
		(|D|-|D_C(G)|) \frac{C^p}{|D|}  
		\leq 1
	$$
	and this yields the statement.
\end{proof}
Lemma \ref{lem:z0} implies that if we define  $C(t):=\frac{1}{(1-t)^{1/p}}$, then for any $0<t<1$ and $G\in L^p(D)$
we have
$$
	|D_{C(t)}(G)| \geq t|D|.
$$
Next, we define (cf. Figure \ref{fig:s_t})
\begin{equation}\label{eq:stdef}
	s_t(D):=\inf_{S\subset D,\ |S|=t|D|} \sup_{z\in S} \dist(z,\partial D).
\end{equation}
\begin{figure}[h!]
	\centering
	\begin{overpic}[width = .5\textwidth]{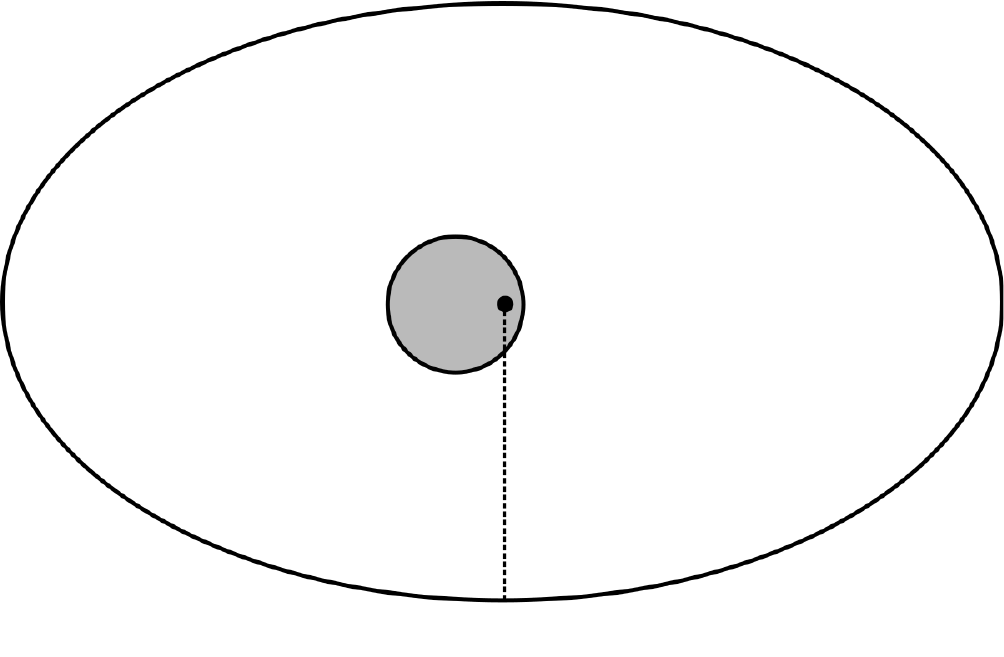}
	 \put(47,35){$z$}
	\put(37,42){$S$}
	\put(51,23){\footnotesize $\dist(z,\partial D)$}
	\put(1,50){$D$}
\end{overpic}
	\caption{A domain $D$, a subset $S$ and an element $z \in S$ that maximizes $\dist(z,\partial D)$. The value of $s_t(D)$ is then obtained by taking the infimum of these quantities over all $S\subset D$ with the same area $|S|=t|D|$, i.e., $s_t(D)=\inf\limits_{S\subset D,\ |S|=t|D|} \sup\limits_{z\in S} \dist(z,\partial D)$.}
	\label{fig:s_t}
\end{figure}
For ``nice'' domains, the quantity $s_t(D)$ can be controlled easily. We mention 
the following result; the proof is an elementary calculus computation.
\begin{lemma}\label{lem:shape}
	For all $s>r>0$ and $z\in \mathbb{C}$ we have the estimate
	$$
		s_{1/2}(B_r(z)) \geq c\cdot r\quad \mbox{and}\quad
		s_{1/2}(B_{r,s}(z))\geq c\cdot (s-r),
	$$
	with $c=1-\frac{1}{\sqrt{2}}$.
\end{lemma}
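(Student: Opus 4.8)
The plan is to prove the two lower bounds separately, in each case by exhibiting a single set $S$ with $|S| = \tfrac12|D|$ for which $\sup_{z\in S}\dist(z,\partial D)$ is as small as possible, thereby bounding the infimum defining $s_{1/2}(D)$ from above is \emph{not} what we want — we need a lower bound on $s_{1/2}$, so instead the task is to show that \emph{every} admissible $S$ (of area $\tfrac12|D|$) must contain a point $z$ reasonably far from $\partial D$. The key observation is that the ``thin shell'' near $\partial D$, consisting of points within distance $\varepsilon$ of the boundary, has area strictly less than $\tfrac12|D|$ once $\varepsilon$ is chosen appropriately; hence any set of area $\tfrac12|D|$ cannot fit entirely inside that shell and must reach depth at least $\varepsilon$.

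For the disc $D = B_r(z)$ I would compute directly: the set of points at distance at most $\varepsilon$ from $\partial B_r(z)$ within the disc is the annulus $B_{r-\varepsilon,r}(z)$, of area $\pi(r^2 - (r-\varepsilon)^2) = \pi\varepsilon(2r-\varepsilon)$. Requiring this to be less than $\tfrac12|D| = \tfrac12\pi r^2$ and solving the resulting quadratic $(\,r-\varepsilon\,)^2 > \tfrac12 r^2$ gives $\varepsilon < r(1 - 1/\sqrt2)$. Thus for $c = 1 - 1/\sqrt2$, any $S$ with $|S| = \tfrac12|D|$ has a point with $\dist(\cdot,\partial D) \geq cr$, which is exactly the claimed bound. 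For the annulus $D = B_{r,s}(z)$ the same idea applies, but now the boundary has two components (inner and outer circles), so the $\varepsilon$-neighborhood of $\partial D$ inside $D$ is a union of two sub-annuli, and the deepest points lie on the central circle of radius $(r+s)/2$, whose distance to $\partial D$ is $(s-r)/2$. I would compute the area of the $\varepsilon$-collar and again extract the largest $\varepsilon$ for which this collar has area below $\tfrac12|D| = \tfrac12\pi(s^2-r^2)$; the factor $1-1/\sqrt2$ emerges from the same quadratic comparison, yielding $s_{1/2}(B_{r,s}(z)) \geq c(s-r)$.

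The main obstacle — though it is elementary rather than deep — is bookkeeping the annulus case carefully: because the collar grows from both boundary circles simultaneously, one must check that the two collars do not overlap before the area threshold is reached (equivalently, that $2\varepsilon < s-r$ for the relevant $\varepsilon$), and verify that the worst case is governed by the central circle so that the extracted $\varepsilon$ genuinely corresponds to a realizable depth $\dist(z,\partial D)$. Once the collar area is written as a function of $\varepsilon$ and compared against $\tfrac12\pi(s^2-r^2)$, the bound $\varepsilon \geq (1-1/\sqrt2)(s-r)$ follows from the same algebra as in the disc case after normalizing. I would close by noting that since the argument produces, for \emph{every} $S$ of the prescribed area, a point at depth at least $c\cdot r$ (resp. $c\cdot(s-r)$), taking the infimum over such $S$ preserves the inequality and gives the stated lower bound on $s_{1/2}$.
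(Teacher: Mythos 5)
Your overall strategy---bounding $s_{1/2}(D)$ from below by showing that the $\varepsilon$-collar $\{w\in D:\ \dist(w,\partial D)\le\varepsilon\}$ has area below $\tfrac12|D|$, so that no admissible $S$ can fit inside it---is the right one, and your disc computation is correct and sharp: the collar $B_{r-\varepsilon,r}(z)$ has area $\pi\varepsilon(2r-\varepsilon)$, and the comparison $(r-\varepsilon)^2\ge \tfrac12 r^2$ gives exactly $\varepsilon\le\left(1-\tfrac{1}{\sqrt2}\right)r$, consistent with Figure \ref{fig:s_t-disc}, which shows that $s_{1/2}(B_r(z))$ is attained by $S=B_{r/\sqrt2,\,r}(z)$. (The paper offers no written proof of Lemma \ref{lem:shape}---it is dismissed as ``an elementary calculus computation''---so yours is presumably the intended argument.) One minor point of rigor: at $\varepsilon=\left(1-\tfrac{1}{\sqrt2}\right)r$ the collar has area \emph{exactly} $\tfrac12|D|$, so argue instead that any $S$ with $\sup_{w\in S}\dist(w,\partial D)=m<\left(1-\tfrac{1}{\sqrt2}\right)r$ is contained in a collar of area strictly less than $\tfrac12|D|$, a contradiction; hence every admissible $S$ reaches depth at least $\left(1-\tfrac{1}{\sqrt2}\right)r$.

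The annulus half of your plan, however, has a genuine gap: the ``same quadratic comparison'' does \emph{not} produce the factor $1-\tfrac{1}{\sqrt2}$, because the quadratic terms cancel between the two collar components. For $0<\varepsilon\le(s-r)/2$ the collar is $B_{r,r+\varepsilon}(z)\cup B_{s-\varepsilon,s}(z)$, with area $\pi\left[(r+\varepsilon)^2-r^2\right]+\pi\left[s^2-(s-\varepsilon)^2\right]=2\pi\varepsilon(r+s)$, which is \emph{linear} in $\varepsilon$; comparing with $\tfrac12|D|=\tfrac{\pi}{2}(s-r)(s+r)$ gives $\varepsilon=(s-r)/4$. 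Since the sublevel sets of the distance function are exactly these collars, this is in fact an exact evaluation, $s_{1/2}(B_{r,s}(z))=(s-r)/4$ (and your worry about overlapping collars is moot, as $(s-r)/4<(s-r)/2$). Because $\tfrac14<1-\tfrac{1}{\sqrt2}\approx 0.293$, no amount of bookkeeping can recover the stated constant: the annulus part of Lemma \ref{lem:shape} as printed is false with $c=1-\tfrac{1}{\sqrt2}$ and true with $c=\tfrac14$. This does not damage the paper---wherever the lemma is invoked (via Lemma \ref{lem:tlem}, in Theorems \ref{thm:stabdisc} and \ref{thm:stabann}) only \emph{some} uniform constant $c>0$ is needed, and $\tfrac14$ serves---but your writeup must replace the claimed quadratic derivation in the annulus case by the linear computation above and state the corrected constant.
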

\begin{figure}[h!]
	\centering
	\begin{overpic}[width = .3\textwidth]{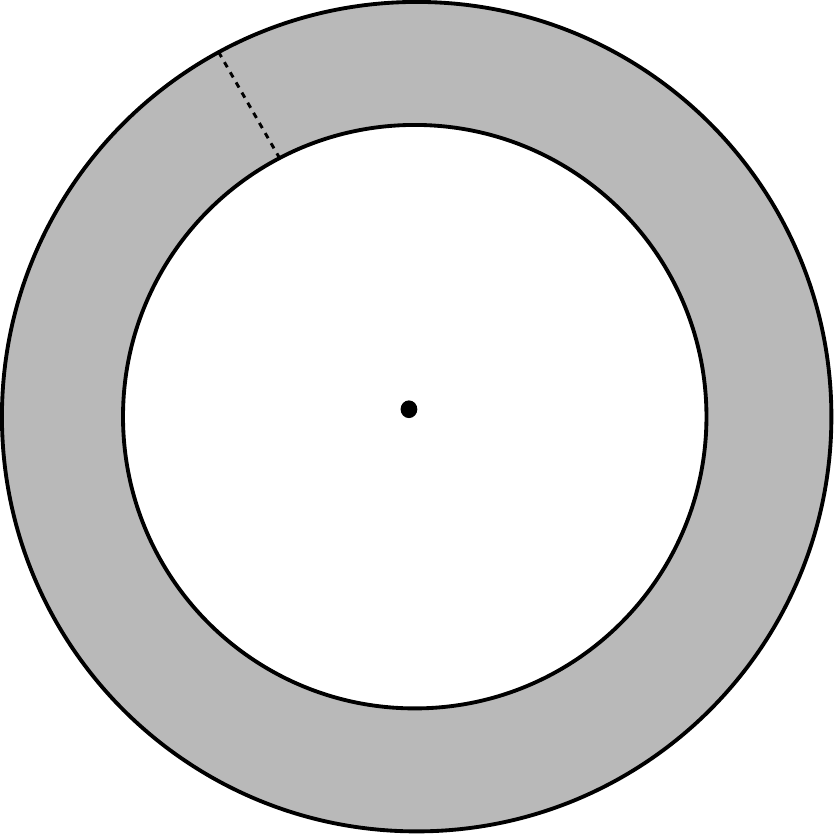}
	 \put(48,46){$z$}
	\put(86,60){$S$}
	\put(31,88){\footnotesize $s_{1/2}(D)$}
	\put(72,96){$D=B_r(z)$}
\end{overpic}
	\caption{For $D=B_r(z)$, $s_{1/2}B_r(z)$ is attained by the subset $S=B_{r_0,r}(z)$ with $r_0=\frac{1}{\sqrt{2}}r$.}
	\label{fig:s_t-disc}
\end{figure}

Control of $s_t$ lets us gain control over both the sampling constant and 
the analytic Poincar\'e constant.
As an immediate consequence of Lemma \ref{lem:z0} we have the following result.
\begin{lemma}\label{lem:tlem}
	Let $0<t<1$, let $ D\subset \mathbb{C}$ be a domain and $G\in L^p(D)$. 
	There exists $z_0\in D$ with 
	$$
		C_{samp}(p,D,z_0,G)\le \frac{1}{(1-t)^{1/p}},
	$$
	and 
	$$
		C^a_{poinc}(p,D,z_0)\le C_{poinc}(p,D)\left(1+\left(\frac{|D|}{\pi s_t(D)^2}\right)^{1/p}\right).
	$$
	\end{lemma}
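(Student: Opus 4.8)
The plan is to combine Lemma~\ref{lem:z0} with the definition~\eqref{eq:stdef} of $s_t(D)$ in order to locate a single point $z_0$ that simultaneously enjoys a small sampling constant and lies deep inside $D$, and then to feed the depth information into Lemma~\ref{lem:analpoincconst}. The result should then follow as an immediate consequence of these two lemmas, with no genuine computation required.

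First I would set $C:=(1-t)^{-1/p}$, so that $1-C^{-p}=t$, and invoke Lemma~\ref{lem:z0}. This yields
$$|D_C(G)|\ge |D|\Big(1-\frac{1}{C^p}\Big)=t\,|D|,$$
where $D_C(G)=\{z_0\in D:\ |G(z_0)|\,|D|^{1/p}\le C\,\|G\|_{L^p(D)}\}$. Every $z_0\in D_C(G)$ automatically satisfies the first claimed bound: since $G(z_0)$ is a constant on $D$ we have $\|G(z_0)\|_{L^p(D)}=|G(z_0)|\,|D|^{1/p}\le C\,\|G\|_{L^p(D)}$, so that $C_{samp}(p,D,z_0,G)\le C=(1-t)^{-1/p}$ directly from Definition~\ref{def: Csamp}.

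Next I would exploit that $D_C(G)$ is a set of measure at least $t|D|$. By the definition of $s_t(D)$ as an infimum over all subsets of measure $t|D|$, choosing a subset $S\subseteq D_C(G)$ with $|S|=t|D|$ gives $\sup_{z\in S}\dist(z,\partial D)\ge s_t(D)$, so there is a point $z_0\in S\subseteq D_C(G)$ with $\dist(z_0,\partial D)\ge s_t(D)$. For this $z_0$ the first bound holds as above, while applying Lemma~\ref{lem:analpoincconst}, together with the monotonicity of $x\mapsto x^{1/p}$ and $\dist(z_0,\partial D)\ge s_t(D)$, gives
$$C^a_{poinc}(p,D,z_0)\le C_{poinc}(p,D)\Big(1+\Big(\frac{|D|}{\pi\,\dist(z_0,\partial D)^2}\Big)^{1/p}\Big)\le C_{poinc}(p,D)\Big(1+\Big(\frac{|D|}{\pi\,s_t(D)^2}\Big)^{1/p}\Big),$$
which is exactly the second claimed bound.

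The only delicate point is the extraction in the previous paragraph of a genuine point $z_0$ realizing $\dist(z_0,\partial D)\ge s_t(D)$, rather than a supremum merely approached in the limit; I expect this to be the main (if minor) obstacle. It amounts to checking that the superlevel set $\{z\in D:\dist(z,\partial D)\ge s_t(D)\}$ meets $D_C(G)$, which follows from a measure count: $\{z:\dist(z,\partial D)<s_t(D)\}$ has measure at most $t|D|$ by the definition of $s_t(D)$, while $|D_C(G)|\ge t|D|$. In the borderline case where both estimates are tight, one instead passes to a point of $D_C(G)$ with $\dist(z_0,\partial D)$ arbitrarily close to $s_t(D)$, which suffices for the bound. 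Everything else is a direct substitution into the two cited lemmas.
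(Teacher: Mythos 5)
Your proof is correct and follows essentially the same route as the paper's: choose $C=(1-t)^{-1/p}$, apply Lemma~\ref{lem:z0} to get $|D_C(G)|\ge t|D|$, use the definition \eqref{eq:stdef} of $s_t(D)$ to find $z_0\in D_C(G)$ with $\dist(z_0,\partial D)\ge s_t(D)$, and conclude with Lemma~\ref{lem:analpoincconst}. The attainment subtlety you flag is genuine --- the paper silently passes from $\sup_{z\in D_C(G)}\dist(z,\partial D)\ge s_t(D)$ to the existence of such a $z_0$ --- and your measure-count patch (with the $\eps$-approximation in the borderline case, harmless for all later uses of the lemma) makes your write-up, if anything, slightly more careful than the original.
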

\begin{proof}
	Picking $C(t)= \frac{1}{(1-t)^{1/p}}$
	we get that $|D_{C(t)}(G)|\geq t|D|$ by Lemma \ref{lem:z0}.
	Therefore 
	$$
		\sup_{z\in D_{C(t)}(G)} \dist(z,\partial D)\geq s_t(D)
	$$
	and thus there exists $z_0\in D_{C(t)}(G)$ with 
	$$
		\dist(z_0,\partial D)\geq s_t(D).
	$$
	Lemma \ref{lem:analpoincconst} now immediately implies the
	claimed bound for $C^a_{poinc}(p,D,z_0)$.
	
	On the other hand, by the definition of $C(t)$ and the fact that 
	$z_0\in D_{C(t)}(G)$, we get the desired bound on the sampling constant
	which proves the statement.
\end{proof}
In order to make use of Lemma \ref{lem:tlem} to estimate the 
constants $C_{samp}(p,D,z_0,G)$ and \\$C^a_{poinc}(p,D,z_0)$ we need
to control only the quantity $s_t(D)$.
For ``nice'' domains $D$ we expect that $s_t(D)$ behaves like 
the diameter $\mathrm{diam}(D)$ and also that $\mathrm{diam}(D)^2$
behaves like $|D|$; hence the quotient $\frac{|D|}{\pi s_t(D)^2}$ would be uniformly bounded which implies that, for a suitable choice
of $z_0\in D$, the constant $C^a_{poinc}(p,D,z_0)$ is comparable to the classical Poincar\'e constant $C_{poinc}(p,D)$, while 
$C_{samp}(p,D,z_0,G)$ is bounded by a fixed constant. These considerations will give us full control 
of all underlying constants for sufficiently nice domains, needed in the estimates in the next section. 

\section{Stability of Phase Reconstruction from Holomorphic Measurements}\label{sec:main}
The purpose of this section is to formulate the following fundamental result and discuss some of its implications.
\begin{theorem}\label{prop:main}
	 Suppose that $F_1$ belongs to a class
	 of atoll functions as in Definition \ref{def:funcclass}, i.e., $F_1\in \mathcal{H}(D,(D_0^i)_{i=1}^l,\delta,\Delta)$. 
	 Assume further that $F_2\in C^1(D)$ such that
	 there exists a continuous function
	 $\eta: D \to \mathbb{C}$ for which
	 both functions $\eta\cdot F_1,\ \eta\cdot F_2\in \mathcal{O}(D)$. 
	 Suppose that $1\le p\le \infty$.
	 
	Pick $z_0\in D_+$. We denote $C_{samp}:=C_{samp}(p,D_+,z_0,|F_1|-|F_2|)$ meaning that
	 \begin{equation}\label{eq:z0est}
	 	 \||F_1(z_0)|-|F_2(z_0)|\|_{L^p(D_+)}\le C_{samp}\cdot \||F_1|-|F_2|\|_{L^p(D_+)}.
	 \end{equation}

	 Then the following estimate holds:
	 \begin{equation}\label{eq:mainest}
	 	 \inf_{\alpha \in \mathbb{R}}\norm{F_1-e^{\i\alpha}F_2}{L^p(D)}
		 \le C(z_0,p,D_+,(D_0^i)_{i=1}^l)\frac{\Delta^2}{\delta^2}\norm{|F_1|-|F_2| }{W^{1,p}(D_+)},
	 \end{equation}
	 where for the constant $C(z_0,p,D_+,(D_0^i)_{i=1}^l)$ we may choose (with a suitably large but uniform constant $c>0$):
	 \begin{multline}\label{eq:const}
	 	C(z_0,p,D_+,(D_0^i)_{i=1}^l)=
	 	 c\cdot (C^a_{poinc}(D_+)+ C_{samp}
	 	 	 	 \\ + 
	 	 	 	 \sum_{i=1}^lC_{bound}(D_0^i)\cdot \var(\eta,D_0^i)\cdot 
	 	 	 	 C_{trace}(D_+)
	 	 	 	 (C^a_{poinc}(D_+)+C_{samp})),
	 \end{multline}
	 where we have omitted the dependence of the various constants on $p,z_0$ and denote 
	 $$
	 	\var(\eta,D_0^i):=\frac{\max_{z\in \partial D_0^i}|\eta(z)|}{\min_{z\in D_0^i}|\eta(z)|},\quad i=1,\dots ,l.
	 $$
\end{theorem}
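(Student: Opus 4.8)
The plan is to split the estimate into two contributions: the error on the atoll $D_+$, where we have a uniform lower bound $|F_1|\ge \delta$, and the error on the lagoons $D_0^i$, where no such bound is available. On $D_+$ the governing principle is the ``key lemma'' (Lemma \ref{keylemma}): for a holomorphic function the rate of change of the function is controlled by the rate of change of its modulus. Since $\eta F_1$ and $\eta F_2$ are holomorphic but $F_1,F_2$ themselves need not be, I would first reduce to the holomorphic setting by working with $\eta F_1,\eta F_2$, or equivalently by absorbing $\eta$ through the variation factor $\var(\eta,D_0^i)$ only where it is genuinely needed, namely on the lagoons. The factor $\Delta^2/\delta^2$ will enter precisely here: to pass from control of $\nabla|F_j|$ to control of the phase $e^{\i\alpha}$ aligning $F_1$ and $F_2$, one estimates the derivative of $\arg F_1-\arg F_2$ in terms of $|\nabla(|F_1|-|F_2|)|$ divided by $|F_j|^2\ge\delta^2$, while the numerator carries an upper bound $\Delta$ from the function class. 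This is the step that converts the Cauchy--Riemann structure into a pointwise bound on the phase mismatch.

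The second ingredient is an application of the analytic Poincar\'e inequality (\ref{eq:Poincare}) on $D_+$. After using Lemma \ref{keylemma} to bound the relevant derivative quantity by $\norm{\nabla(|F_1|-|F_2|)}{L^p(D_+)}$, I would invoke the Poincar\'e inequality, anchored at the chosen point $z_0\in D_+$, to pass from the derivative bound to an $L^p$ bound on $F_1-e^{\i\alpha}F_2$ over $D_+$. The constant $C_{samp}$ enters through the anchoring: the Poincar\'e inequality controls $F-F(z_0)$, and $(\ref{eq:z0est})$ lets us absorb the value at $z_0$ (which measures the modulus discrepancy there) back into $\norm{|F_1|-|F_2|}{L^p(D_+)}$. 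This explains the appearance of the sum $C^a_{poinc}(D_+)+C_{samp}$ as the leading factor in $(\ref{eq:const})$. The optimal phase $\alpha$ is chosen to align the two functions at (or near) $z_0$, which is why a point with a small sampling constant is exactly what Section \ref{subsec:pointev} was designed to produce.

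The genuinely delicate part is propagating the bound from the atoll $D_+$ into the lagoons $D_0^i$, where $F_1,F_2$ may be small and Lemma \ref{keylemma} gives no useful lower bound. Here the idea is that $\eta(F_1-e^{\i\alpha}F_2)$ is holomorphic, so its values inside each lagoon are controlled by its boundary values on $\partial D_0^i$ via Theorem \ref{thm:boundvals} (hence the factor $C_{bound}(D_0^i)$), and the factor $\var(\eta,D_0^i)$ accounts for the transition between $\eta(F_1-e^{\i\alpha}F_2)$ and $F_1-e^{\i\alpha}F_2$ across the lagoon, comparing $\eta$ on the boundary to its minimum inside. The boundary $\partial D_0^i\subset D_+$ lies in the atoll, so I would use the Sobolev trace theorem (Theorem \ref{thm:trace}, contributing $C_{trace}(D_+)$) to dominate the $L^p(\partial D_0^i)$-norm by the $W^{1,p}(D_+)$-norm of $F_1-e^{\i\alpha}F_2$, which was already controlled in the first two steps and thus carries the factor $(C^a_{poinc}(D_+)+C_{samp})$ inside the lagoon term. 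I expect the main obstacle to be the careful bookkeeping that makes the trace-then-boundary-value argument valid: one must ensure that the phase $\alpha$ chosen to align on $D_+$ is simultaneously the right one for every lagoon, that $\eta(F_1-e^{\i\alpha}F_2)$ is genuinely holomorphic (not merely $\eta F_1,\eta F_2$ separately), and that the trace and boundary-value estimates are applied on compatible domains so that no hidden dependence on the lagoon geometry sneaks past the stated constants. Summing the atoll and lagoon contributions then yields $(\ref{eq:mainest})$ with the constant $(\ref{eq:const})$.
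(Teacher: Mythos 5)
Your outline reproduces the paper's architecture faithfully: choose $\alpha$ to align the two functions at $z_0$, control the atoll via the key lemma, the analytic Poincar\'e inequality and \eqref{eq:z0est}, and control each lagoon via holomorphy of $\eta\,(F_2-e^{\i\alpha}F_1)$, Theorem \ref{thm:boundvals}, $\var(\eta,D_0^i)$ and the trace theorem. But as written there is a genuine gap in the lagoon step: you dominate $\norm{F_1-e^{\i\alpha}F_2}{L^p(\partial D_0^i)}$ by $\norm{F_1-e^{\i\alpha}F_2}{W^{1,p}(D_+)}$ and assert that this quantity ``was already controlled in the first two steps.'' It was not --- your atoll step produces only an $L^p(D_+)$ bound --- and it \emph{cannot} be controlled from the hypotheses: Definition \ref{def:funcclass} bounds $|\nabla |F_1||$, not $|\nabla F_1|$, and the phases of $F_1,F_2$ may oscillate arbitrarily fast while being compensated by $\eta$ (take $F_1=e^{g}$, $\eta=e^{-g}$ with $\Re g$ tame and $\Im g$ wild), so no estimate of $\norm{F_1-e^{\i\alpha}F_2}{W^{1,p}(D_+)}$ by the right-hand side of \eqref{eq:mainest} is available. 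A related soft spot sits in your atoll step: the analytic Poincar\'e inequality \eqref{eq:Poincare} applies only to holomorphic functions, and $F_1-e^{\i\alpha}F_2$ is not holomorphic; applying it instead to $\eta\,(F_1-e^{\i\alpha}F_2)$ would contaminate the atoll term with the size of $\eta$ on $D_+$, a factor that does not appear in \eqref{eq:const}.

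The missing device --- and the rigorous form of your heuristic about $\arg F_1-\arg F_2$ with the $\Delta/\delta^2$ scaling --- is the quotient $F:=F_2/F_1=(\eta F_2)/(\eta F_1)$, which is holomorphic on $D_+$ precisely because $|F_1|\ge\delta$ there (this is where $\eta$ cancels), and whose modulus $|F|=|F_2|/|F_1|$ involves only the measured data. Lemma \ref{keylemma} gives $|F'|=\big|\nabla|F|\big|$, and the quotient rule then yields $\norm{F'}{L^p(D_+)}\le c\,(\Delta/\delta^2)\norm{|F_1|-|F_2|}{W^{1,p}(D_+)}$, controlling modulus ratio and phase mismatch simultaneously. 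Choosing $\alpha$ with $|F_2(z_0)-e^{\i\alpha}F_1(z_0)|=\big||F_2(z_0)|-|F_1(z_0)|\big|$ gives the pointwise estimate $|F_2(z)-e^{\i\alpha}F_1(z)|\le \Delta\big(|F(z)-F(z_0)|+\delta^{-1}\big||F_2(z_0)|-|F_1(z_0)|\big|\big)$, valid wherever $|F_1|\ge\delta$, in particular on $D_+$ and on $\partial D_0^i\subset\partial D_+$. On $D_+$ one concludes by applying the analytic Poincar\'e inequality to the holomorphic $F$ (plus $C_{samp}$ for the $z_0$-term); on $\partial D_0^i$ one uses the same pointwise estimate and applies the trace theorem to $F-F(z_0)$ --- whose full $W^{1,p}(D_+)$-norm \emph{is} controlled, since $|\nabla F|$ is comparable to $|F'|$ by the Cauchy--Riemann equations --- before Theorem \ref{thm:boundvals} and $\var(\eta,D_0^i)$ transfer the bound into the lagoon. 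With this substitution your outline becomes the paper's proof, and the constants you predicted, including the factor $(C^a_{poinc}(D_+)+C_{samp})$ inside the lagoon term of \eqref{eq:const}, come out exactly as stated.
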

\begin{remark}\label{rem:ccontrol}
By Lemma \ref{lem:tlem}, the two constants $C_{samp}$ and $C^a_{poinc}(D_+)$ depending on $z_0$ can be controlled simultaneously. To achieve the best possible $C(z_0,p,D_+(D_0^i)_{i=1}^l)$, $z_0$ should be picked s.t. $\dist(z_0,\partial D_+)$ is large and $\| |F_1(z_0)| - |F_2(z_0)| \|$ is small. 
\end{remark} 

\begin{remark}
In Theorem \ref{prop:main}, we assume that there exists a normalization function $\eta$,  s.t. $\eta\cdot F_1, \eta \cdot F_2\in\mathcal{O}(D)$. In Sections \ref{sec:Gabor} and \ref{sec:cauchy}, we show for $F$ in the image domain of the Gabor or Cauchy wavelet transform, respectively, the existence of explicit functions $\eta$ such that $\eta\cdot F$ is holomorphic on the entire parameter domain. On the other hand, for more general measurements, such global $\eta$ may not exist and for $F\in\mathcal{H}(D, D_0,\delta,\Delta)$, there might be accumulated zeros in $D_0$. In this case, if the accumulated zero set $D_O :=\overline{\{z; \ F_1(z)F_2(z) = 0\}^\circ}\subset D_0$ is simply connected with smooth boundary, then the bound \eqref{eq:mainest} in Theorem \ref{prop:main} still holds with the domain of the $L^p$-norm on the right hand side changing from $D_+$ to $D$ \footnote{This extension requires a generalized version of Theorem \ref{thm:boundvals} for the annulus, which can be shown following the same idea of proof of the disc case but considering the Hardy space defined on an annulus instead, see Theorem 3 in \cite{wang1983real}}.
\end{remark}

Before we provide the lengthy proof of Theorem \ref{prop:main}
in Section \ref{sec:proof},
we pause and provide some special examples which might be illuminating.
To give two simple examples,
in Section \ref{sec:disc} we shall see how to gain 
explicit estimates for the quantity $C(z_0,p,D_+,D_0)$ for $D=D_+$ a disc (i.e., $D_0=\emptyset$)
and in Section \ref{sec:annulus} for $D_+$ an annulus.

These examples should make clear that similar results also hold for more general domains.
\subsection{Example I: A Disc}\label{sec:disc}
In this subsection we shall treat the case $D=D_+=B_r(z)$ and $D_0=\emptyset$. The class 
$\mathcal{H}(D_+,D_0,\delta,\Delta)$ now consists of functions
which are bounded from below by $\delta$ and which (together with their gradient) are bounded 
from above by $\Delta$ on all of $B_r(z)$.
We have the following result.
\begin{theorem}\label{thm:stabdisc}
	 Suppose that $F_1\in \mathcal{H}(B_r(z),\emptyset,\delta,\Delta)$ for some $r>0$ and $z \in \mathbb{C}$. 
	We further assume that $F_2\in C^1(B_r(z))$ such that
	 there exists a continuous function
	 $\eta: B_r(z) \to \mathbb{C}$ for which
	 both functions $\eta\cdot F_1,\ \eta\cdot F_2\in \mathcal{O}(B_r(z))$. 

	 Then there exists a uniform constant $c>0$ such that the following estimate holds.
	 \begin{equation}\label{eq:mainestdisc}
	 	 \inf_{\alpha \in \mathbb{R}}\norm{F_1-e^{\i\alpha}F_2}{L^2(B_r(z))}
		 \le c\cdot (1+r)\cdot \frac{\Delta^2}{\delta^2}\cdot \norm{|F_1|-|F_2| }{W^{1,2}(B_r(z))}.
	 \end{equation}
\end{theorem}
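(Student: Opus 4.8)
The plan is to obtain Theorem~\ref{thm:stabdisc} as a direct corollary of Theorem~\ref{prop:main}, specialized to a single disc with no lagoons. I set $D=D_+=B_r(z)$, $D_0=\emptyset$ (so $l=0$), and $p=2$. With no holes present, the sum over lagoons in the constant \eqref{eq:const} is empty and the right-hand side of \eqref{eq:mainest} is integrated over all of $B_r(z)$; the master estimate therefore reduces to
$$
	\inf_{\alpha\in\mathbb{R}}\norm{F_1-e^{\i\alpha}F_2}{L^2(B_r(z))}\le c\,(C^a_{poinc}(B_r(z))+C_{samp})\,\frac{\Delta^2}{\delta^2}\,\norm{|F_1|-|F_2|}{W^{1,2}(B_r(z))}.
$$
It then remains only to show that $z_0$ can be chosen so that $C^a_{poinc}+C_{samp}\le c(1+r)$ with a uniform constant.

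Next I would apply Lemma~\ref{lem:tlem} with $t=1/2$ to the function $G:=|F_1|-|F_2|$ on $B_r(z)$. This yields a single point $z_0\in B_r(z)$ at which the sampling constant satisfies $C_{samp}(2,B_r(z),z_0,G)\le(1-\tfrac12)^{-1/2}=\sqrt2$ and, simultaneously,
$$
	C^a_{poinc}(2,B_r(z),z_0)\le C_{poinc}(2,B_r(z))\Bigl(1+\bigl(\tfrac{|B_r(z)|}{\pi\,s_{1/2}(B_r(z))^2}\bigr)^{1/2}\Bigr).
$$
The essential content here is exactly this \emph{simultaneous} control: picking $z_0$ merely to maximize $\dist(z_0,\partial B_r(z))$ would bound $C^a_{poinc}$ but leave $C_{samp}$ uncontrolled, whereas Lemma~\ref{lem:tlem} furnishes one point that is both near-central and a near-optimal sampling point for $G$.

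It then suffices to estimate the three disc-specific quantities. Since $B_r(z)$ is a bounded convex domain with Lipschitz boundary and diameter $2r$, Theorem~\ref{thm:poincconv} gives $C_{poinc}(2,B_r(z))\le 2r/\pi$. Lemma~\ref{lem:shape} gives $s_{1/2}(B_r(z))\ge(1-\tfrac{1}{\sqrt2})r$, and $|B_r(z)|=\pi r^2$. Consequently the geometric ratio $\tfrac{|B_r(z)|}{\pi\,s_{1/2}(B_r(z))^2}$ is bounded by the $r$-independent constant $(1-\tfrac{1}{\sqrt2})^{-2}$, so the bracketed factor above is a universal constant and $C^a_{poinc}(2,B_r(z),z_0)\le c'r$. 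Hence $C^a_{poinc}+C_{samp}\le c'r+\sqrt2\le c''(1+r)$ for a uniform $c''$, and substituting this into the reduced master estimate produces precisely \eqref{eq:mainestdisc}.

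I do not expect a genuine obstacle here, since all the analytic difficulty is absorbed into Theorem~\ref{prop:main} (proved later in Section~\ref{sec:proof}); the disc case is purely a matter of inserting the convex Poincar\'e bound and the elementary lower bound on $s_{1/2}$ for a disc. The one point deserving care is that the two $z_0$-dependent constants $C_{samp}$ and $C^a_{poinc}$ must be tamed at the \emph{same} point, which is exactly the role played by Lemma~\ref{lem:tlem}.
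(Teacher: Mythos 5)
Your proposal is correct and follows essentially the same route as the paper's proof: both specialize Theorem \ref{prop:main} to $D=D_+=B_r(z)$ with $D_0=\emptyset$, invoke Lemma \ref{lem:shape} to get $s_{1/2}(B_r(z))\geq c\cdot r$, use Lemma \ref{lem:tlem} to pick a single $z_0$ controlling $C_{samp}$ and $C^a_{poinc}$ simultaneously, and finish with the convex-domain Poincar\'e bound of Theorem \ref{thm:poincconv}. Your write-up is merely more explicit about the numerical constants (e.g.\ $C_{samp}\le\sqrt2$ and the ratio bound $(1-\tfrac{1}{\sqrt2})^{-2}$) than the paper, which lets uniform constants vary from line to line.
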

\begin{proof}
We let uniform constants $c$ vary from line to line.
	First we note that, by Lemma \ref{lem:shape}, there exists a uniform constant $c>0$ such that $s_{1/2}(B_r(z))\geq c\cdot r$ with $s_t(B_r(z))$ defined as in (\ref{eq:stdef}).
	It follows from Lemma \ref{lem:tlem} that there exists
	a uniform constant $c>0$ and  $z_0\in B_r(z)$ 
	with 
	$$
		C_{samp}(p,D_+,z_0,G)\le c\quad \mbox{and}
		\quad C^a_{poinc}(p,B_r(z),z_0)\le c\cdot C_{poinc}(p,B_r(z)),
	$$
	where we have put $G:=|F_2|-|F_1|$.
	
	Now it remains to employ Theorem \ref{thm:poincconv} to get a suitable estimate
	on the quantity (\ref{eq:const}) for $p=2$ which, together with Theorem \ref{prop:main} yields the desired result.
\end{proof}
More general results can be obtained for domains $D$ which are diffeomorphic to
$B_r(z)$ in an obvious way. The resulting bounds will depend
on upper and lower bounds of the Jacobian of the mapping which maps $D$ 
to $B_r(z)$.
 
A similar result can also be established for general convex domains $D$ where $r$ in the theorem above may be replaced by $\mbox{diam}(D)$ and the constant $c$ may depend on the geometry of $D$.

We omit the details.
\subsection{Example II: An Annulus}\label{sec:annulus}
To make the general result of Theorem \ref{prop:main} more accessible and to give an idea of the quantitative nature of the stability constant
$C(z_0,p,D_+,D_0)$ we treat here the case of an annulus $D_+=B_{r,s}(z)$
and $D_0=B_{r}(z)$ for $s>r>0$ and some $z \in \mathbb{C}$.
It is interesting to observe the dependence of the stability constant on
the size of the ``lagoon'' $D_0$ on which the phaseless measurements
are allowed to be arbitrarily small. 
We have the following result.
\begin{theorem}\label{thm:stabann}
	 Suppose that $F_1\in \mathcal{H}(B_{s}(z),B_r(z),\delta,\Delta)$ for $s>r>0$. Furthermore, let $F_2\in C^1(B_{s}(z))$ be such that there exists a continuous function
	 $\eta: B_s(z) \to \mathbb{C}$ for which
	 both functions $\eta\cdot F_1,\ \eta\cdot F_2\in \mathcal{O}(B_{s}(z))$. 

	 Then there exist a continuous function $\rho:[0,1)\to\mathbb{R}_+$ 
	 with $\lim_{\rho\to 1_-}=\infty$ and a uniform constant $c > 0$ such that the following estimate holds.
	 \begin{multline}\label{eq:mainestann}
	 	 \inf_{\alpha \in \mathbb{R}}\norm{F_1-e^{\i\alpha}F_2}{L^2(B_s(z))}
		 \le \\
		 c\cdot(1+\rho(r/s)\cdot s)\cdot \left(1+r^{1/2}\cdot \rho(r/s)\cdot (s_j^{1/2}+s_j^{-1/2})\cdot \var(\eta,B_r(z))\right)\cdot \frac{\Delta^2}{\delta^2}\norm{|F_1|-|F_2| }{W^{1,2}(B_{r,s}(z))}.
	 \end{multline}
\end{theorem}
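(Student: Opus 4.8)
The plan is to obtain Theorem \ref{thm:stabann} as a direct specialization of the main result, Theorem \ref{prop:main}, applied with $D = B_s(z)$, atoll $D_+ = B_{r,s}(z)$, single lagoon $D_0 = B_r(z)$ (so $l = 1$), and $p = 2$. All the work then lies in estimating each of the constants appearing in the expression (\ref{eq:const}) for $C(z_0,2,D_+,D_0)$ in terms of $r$, $s$ and $\var(\eta,B_r(z))$. This mirrors the disc case (Theorem \ref{thm:stabdisc}) proved just above, except that the lagoon $D_0$ is now nonempty, so the boundary-value and trace terms of (\ref{eq:const}) genuinely contribute.

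First I would select the base point $z_0$. Applying Lemma \ref{lem:tlem} with $t = 1/2$ to $G := |F_1| - |F_2|$ produces a point $z_0 \in B_{r,s}(z)$ at which, simultaneously, $C_{samp} \le \sqrt{2}$ (a uniform constant) and $C^a_{poinc}(2,D_+,z_0) \le C_{poinc}(2,B_{r,s}(z))\,(1 + (|B_{r,s}(z)|/(\pi s_{1/2}(B_{r,s}(z))^2))^{1/2})$. To make this explicit I would invoke Theorem \ref{thm:poincann}, giving $C_{poinc}(2,B_{r,s}(z)) \le c\,s$, and Lemma \ref{lem:shape}, giving $s_{1/2}(B_{r,s}(z)) \ge c\,(s-r)$. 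Since $|B_{r,s}(z)| = \pi(s^2 - r^2)$, the geometric ratio $|D_+|/(\pi s_{1/2}^2)$ is bounded by $c\,(s+r)/(s-r) = c\,(1 + r/s)/(1 - r/s)$, a quantity depending only on the aspect ratio $r/s$ that diverges as $r/s \to 1$. Absorbing this factor (more precisely its square root) into a continuous function $\rho$ then yields $C^a_{poinc}(2,D_+,z_0) + C_{samp} \le c\,(1 + \rho(r/s)\,s)$, which is precisely the first factor of (\ref{eq:mainestann}).

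Next I would bound the two remaining constants in (\ref{eq:const}). Theorem \ref{thm:bounddisc} gives the boundary-value constant of the lagoon disc, $C_{bound}(2,B_r(z)) \le r^{1/2}$, while Theorem \ref{thm:tracann} gives the annular trace constant, $C_{trace}(2,B_{r,s}(z)) \le \rho(r/s)\,(s^{1/2} + s^{-1/2})$; enlarging $\rho$ if necessary, a single continuous $\rho:[0,1) \to \mathbb{R}_+$ with $\rho(\tau) \to \infty$ as $\tau \to 1_-$ serves for both the Poincar\'e-type and the trace estimate. Substituting these, together with the factor $\var(\eta,B_r(z))$, into the product term $C_{bound}(D_0)\,\var(\eta,D_0)\,C_{trace}(D_+)\,(C^a_{poinc}(D_+) + C_{samp})$ of (\ref{eq:const}) and factoring out the common quantity $(1 + \rho(r/s)\,s)$ produces the second factor $1 + r^{1/2}\rho(r/s)(s^{1/2} + s^{-1/2})\var(\eta,B_r(z))$. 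Combining this with the first factor and inserting into the estimate (\ref{eq:mainest}) of Theorem \ref{prop:main} yields the claimed bound.

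The only genuinely delicate point is the separation of the two different kinds of growth that the constants exhibit: the explicit dependence on the outer radius $s$ (from the scaling of the Poincar\'e and trace constants) must be disentangled cleanly from the purely geometric dependence on the aspect ratio $r/s$ (from the ratio $|D_+|/(\pi s_{1/2}^2)$ and from the annular trace constant), the latter being responsible for the blow-up as the lagoon fills the disc. Once one verifies that every $r/s$-dependent factor can be majorized by a single such $\rho$, the remainder is a routine substitution into Theorem \ref{prop:main}, exactly as in the disc example. I note in passing that the symbol $s_j$ appearing in the stated bound (\ref{eq:mainestann}) is a typographical remnant of the indexed Theorem \ref{thm:gaborapp} and should read $s$.
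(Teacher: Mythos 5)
Your proposal is correct and takes essentially the same route as the paper's own (much terser) proof: specialize Theorem \ref{prop:main} with $D_+=B_{r,s}(z)$, $D_0=B_r(z)$, choose $z_0$ via Lemma \ref{lem:tlem} combined with Lemma \ref{lem:shape}, and bound the remaining constants via Theorems \ref{thm:poincann}, \ref{thm:bounddisc} and \ref{thm:tracann}. You merely make explicit what the paper leaves implicit --- the ratio bound $|D_+|/\bigl(\pi s_{1/2}(D_+)^2\bigr)\le c\,(1+r/s)/(1-r/s)$ and the factoring of \eqref{eq:const} into the two displayed factors --- and you correctly identify $s_j$ in \eqref{eq:mainestann} as a typographical remnant that should read $s$.
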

\begin{proof}
	We first observe the elementary fact that $D_+ = B_{r,s}(z)$ and that, by Lemma
	\ref{lem:shape},
	there exists a uniform constant $c>0$ with
	$$
		s_{1/2}(B_{r,s}(z))\geq c(s-r).
	$$
	Using Lemma \ref{lem:tlem} and setting $G:=|F_1|-|F_2|$ this implies the existence
	of $z_0\in B_{r,s}(z)$ and a uniform constant $c$ with
	$$
		C_{samp}(p,D_+,z_0,G)\le c\quad \mbox{and}
		\quad C^a_{poinc}(p,B_{r,s}(z),z_0)\le c\cdot \frac{1}{(1-r/s)^{1/p}} C_{poinc}(p,B_{r,s}(z)).
	$$
	All further constants may be estimated from Theorems
	\ref{thm:poincann}, \ref{thm:bounddisc} and \ref{thm:tracann}
	which, together with Theorem \ref{prop:main} yield the desired result.
\end{proof}
Theorem \ref{thm:stabann} shows that stability can still be retained, 
even if the function $F_1$ is allowed to be small on a large set.
Again, more general results can be derived for domains which are diffeomorphic to an annulus.
\subsection{Phase Retrieval from Gabor Measurements}\label{sec:Gabor}
For a window $g\in L^2(\mathbb{R})$ define the windowed Fourier transform of $f\in L^2(\mathbb{R})$ as 
\begin{equation}\label{eq:Gabor}
	V_g f(x,y) :=\int_{\mathbb{R}}f(t)\overline{g(t-x)}e^{-2\pi \i t y}dt.
\end{equation}
The Gabor transform is defined as the windowed Fourier transform with window $ \varphi(t):=e^{-\pi t^2}$.
The following result is well-known \cite{grochenig}.
\begin{theorem}\label{thm:gaboranal}
	For $z_0=x_0+\i y_0\in \mathbb{C}$ and with $\eta_{z_0}(z):= e^{\pi(|z-z_0|^2/2- \i \cdot (x+x_0)\cdot (y-y_0))},$ 
	the function 
	$$
		F(z):=V_\varphi f(x,-y) \quad \mbox{where }z = x+\i y
	$$
	satisfies that $\eta_{z_0}\cdot F\in \mathcal{O}(\mathbb{C})$.
\end{theorem}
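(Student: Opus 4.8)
The plan is to verify directly that the stated product $\eta_{z_0}\cdot F$ satisfies the Cauchy–Riemann equations, or equivalently that $\bar\partial(\eta_{z_0}\cdot F) = 0$ where $\bar\partial = \frac12(\partial_x + \i\,\partial_y)$. First I would write out $F(z) = V_\varphi f(x,-y)$ explicitly using the definition \eqref{eq:Gabor} with the Gaussian window $\varphi(t) = e^{-\pi t^2}$, namely
\begin{equation*}
	F(x+\i y) = \int_{\mathbb{R}} f(t)\, e^{-\pi(t-x)^2}\, e^{2\pi \i t y}\, dt,
\end{equation*}
noting the sign flip $y \mapsto -y$ has already been absorbed into the exponent $e^{+2\pi \i t y}$. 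The standard computation (which is exactly the content of the cited reference \cite{grochenig}) is to factor out of the integrand a nonholomorphic prefactor depending only on $(x,y)$ so that what remains under the integral is a genuine holomorphic function of $z=x+\i y$. Completing the square in the exponent $-\pi(t-x)^2 + 2\pi\i ty$ is the key algebraic manipulation: one rewrites it so that $t$ couples to the combination $x+\i y = z$ (up to sign), revealing the analytic dependence.

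Concretely, I expect the completion of the square to produce an identity of the shape $F(z) = \rho(x,y)\cdot G(z)$, where $G(z) = \int_{\mathbb{R}} f(t)\, e^{-\pi t^2}\, e^{2\pi t\, \bar{w}(z)}\, dt$ is holomorphic (or antiholomorphic, to be tracked carefully) because the integrand depends on $z$ only through an entire function of $z$, and $\rho(x,y) = e^{-\pi(x^2+y^2)/2}$ up to phase factors is a purely real-exponential and oscillatory prefactor. Differentiation under the integral sign is justified by the Gaussian decay of $e^{-\pi t^2}$ against any $f\in L^2(\mathbb{R})$ via dominated convergence, so interchanging $\bar\partial$ with the integral poses no difficulty. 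The conclusion that $\eta_{z_0}\cdot F \in \mathcal{O}(\mathbb{C})$ on the \emph{entire} plane follows since $G$ is entire (its defining integral converges for all $z$) and multiplication by the explicit nonvanishing factor $\eta_{z_0}$ is designed precisely to cancel the nonholomorphic prefactor $\rho$.

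The one genuine point requiring care — and the main obstacle — is the bookkeeping of the prefactor: matching the claimed normalizer $\eta_{z_0}(z) = e^{\pi(|z-z_0|^2/2 - \i(x+x_0)(y-y_0))}$ against whatever prefactor $\rho$ the completion of the square actually produces. The appearance of $z_0$ is somewhat arbitrary, reflecting a freedom in where one recenters the Gaussian; the essential claim is only that \emph{some} such nonvanishing real-analytic $\eta$ exists, and I would verify that the stated $\eta_{z_0}$ works for every choice of $z_0$ by checking that $\eta_{z_0}(z)/\eta_{z_0'}(z)$ is itself holomorphic (so that the choice of $z_0$ does not affect holomorphy of the product). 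The cleanest route is simply to compute $\bar\partial\log(\eta_{z_0}\cdot F) = \bar\partial\log\eta_{z_0} + \bar\partial\log F$ away from zeros of $F$ and confirm the two terms cancel; the exponent of $\eta_{z_0}$ is a quadratic polynomial in $x,y$, so $\bar\partial\log\eta_{z_0}$ is elementary to evaluate, and it must equal $-\bar\partial\log F$, which one reads off from the factorization above. Since this is precisely the well-known Bargmann-transform analyticity of the Gabor transform, I would cite \cite{grochenig} for the core identity and limit the written proof to recording the completion of the square and the cancellation of $\bar\partial$-derivatives.
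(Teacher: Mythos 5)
The paper offers no proof of Theorem \ref{thm:gaboranal} at all---it simply cites \cite{grochenig}---and your completion-of-the-square argument is exactly the standard Bargmann-transform computation behind that citation, so your route is correct and essentially the intended one. Since you flagged the prefactor bookkeeping as the main obstacle without executing it, let me confirm it closes: expanding the exponent of $F(x+\i y)=\int_{\mathbb{R}}f(t)\,e^{-\pi(t-x)^2}e^{2\pi\i ty}\,dt$ gives $-\pi t^2+2\pi tz-\pi x^2$, so $F(z)=e^{-\pi x^2}G(z)$ with $G(z)=\int_{\mathbb{R}}f(t)\,e^{-\pi t^2}e^{2\pi tz}\,dt$ entire (local uniform convergence follows from Cauchy--Schwarz against $f\in L^2(\mathbb{R})$, then Morera/Fubini---a cleaner justification than dominated convergence alone, since one needs a locally uniform integrable dominant), and a direct expansion of the exponent of $\eta_{z_0}(z)\,e^{-\pi x^2}$ yields
\begin{equation*}
\eta_{z_0}(z)\,e^{-\pi x^2}
=\exp\Bigl(\pi\Bigl(-\tfrac{z^2}{2}-\bar z_0\,z+\tfrac{|z_0|^2}{2}+\i x_0y_0\Bigr)\Bigr),
\end{equation*}
an entire, nonvanishing function of $z$ (the quadratic part collects to $-z^2/2$ and the linear part to $-\bar z_0 z$), whence $\eta_{z_0}\cdot F\in\mathcal{O}(\mathbb{C})$ for every $z_0$, and your consistency check is automatic because $\eta_{z_0}/\eta_{z_0'}=e^{\pi((\bar z_0'-\bar z_0)z+\mathrm{const})}$ is entire. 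Two minor points: your guessed prefactor $e^{-\pi(x^2+y^2)/2}$ up to phase is consistent, since $e^{-\pi x^2}=e^{\pi\i xy}e^{-\pi|z|^2/2}\,e^{-\pi z^2/2}$ with the last factor absorbed into the entire part; and the explicit factorization above makes your proposed $\bar\partial\log$ cancellation unnecessary---that argument would additionally require remarking that holomorphy off the discrete zero set of $G$ together with continuity extends across the zeros.
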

Now consider the problem of stably reconstructing a function from
the absolute values of its Gabor transform. By Theorem \ref{thm:gaboranal}
we are in a position to apply Theorem \ref{prop:main} directly.
\begin{theorem}\label{thm:gaborstab}
	Suppose that $f\in L^2(\mathbb{R})$.
	Suppose that $V_\varphi f$ is an atoll function associated
	with $D_j:=B_{s_j}(z_j)$ and $D_{0,j}:=B_{r_j}(z_j)$ for $0\le r_j<s_j$ and $z_j\in \mathbb{C}$ for $j=1,\dots , k$, meaning that
	$$
		(V_\varphi f) \Big\vert_{D_j} \in \mathcal{H}(D_j,D_{0,j},\delta_j,\Delta_j) \quad \forall j \in \{1, \dots, k \}.
	$$
	Then there exists a continuous function $\rho:[0,1)\to \mathbb{R}_+$ and a constant $c>0$ so that for all $g\in L^2(\mathbb{R})$ the following estimate
	holds:
	\begin{align*}
		\inf_{\alpha_1,\dots, \alpha_k\in \mathbb{R}}
			& \sum_{j=1}^k \norm{V_\varphi f-e^{\i \alpha_j}V_\varphi g
			}{L^2(D_j)} \le c\cdot \Bigg(\sum_{j=1}^k \frac{\Delta_j^2}{\delta_j^2} (1+\rho(r_j/s_j)\cdot s_j)\cdot \\
			& \left(1+r_j^{1/2}\cdot \rho(r_j/s_j)\cdot (s_j^{1/2}+s_j^{-1/2})\cdot e^{r_j^2 \pi/2}\right)\Bigg)\cdot
			\norm{|V_\varphi f|-|V_\varphi g|
			}{W^{1,2}\left(\bigcup_{j=1}^k (D_j)_+\right)}.
	\end{align*}
\end{theorem}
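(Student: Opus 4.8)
The plan is to reduce this multi-component statement to the single-atoll result of Theorem \ref{prop:main}, applied separately on each disc $D_j = B_{s_j}(z_j)$, and then to quantify the abstract constant $C(z_0,p,(D_j)_+,D_{0,j})$ using the concrete ingredients from Section \ref{sec:prep}. The key structural observation is that the reconstruction phases $\alpha_j$ may be chosen independently on each component, so the infimum over $(\alpha_1,\dots,\alpha_k)$ on the left-hand side splits as $\sum_{j=1}^k \inf_{\alpha_j\in\mathbb{R}}\norm{V_\varphi f - e^{\i\alpha_j}V_\varphi g}{L^2(D_j)}$. This decoupling means it suffices to bound each summand by the corresponding single-atoll estimate and then sum; no interaction between the atolls needs to be controlled.

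First I would fix $j$ and set $F_1 := V_\varphi f\big\vert_{D_j}$ and $F_2 := V_\varphi g\big\vert_{D_j}$. By hypothesis $F_1 \in \mathcal{H}(D_j,D_{0,j},\delta_j,\Delta_j)$, and by Theorem \ref{thm:gaboranal} there is an explicit normalization $\eta_{z_0}$ (depending on a base point, with $|\eta_{z_0}(z)| = e^{\pi|z-z_0|^2/2}$) so that $\eta_{z_0}\cdot V_\varphi f$ and $\eta_{z_0}\cdot V_\varphi g$ are both holomorphic after the reflection $y\mapsto -y$; this is exactly the normalized-holomorphic hypothesis required by Theorem \ref{prop:main}. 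Applying that theorem with $p=2$ on $D = D_j$, $D_+ = (D_j)_+ = B_{r_j,s_j}(z_j)$, and lagoon $D_{0,j} = B_{r_j}(z_j)$ yields
\begin{equation*}
\inf_{\alpha_j\in\mathbb{R}}\norm{V_\varphi f - e^{\i\alpha_j}V_\varphi g}{L^2(D_j)} \le C(z_0,2,(D_j)_+,D_{0,j})\,\frac{\Delta_j^2}{\delta_j^2}\,\norm{|V_\varphi f|-|V_\varphi g|}{W^{1,2}((D_j)_+)}.
\end{equation*}

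The main work is to show that $C(z_0,2,(D_j)_+,D_{0,j})$ can be bounded by the explicit prefactor appearing in the statement, namely $c\,(1+\rho(r_j/s_j)\,s_j)\bigl(1 + r_j^{1/2}\rho(r_j/s_j)(s_j^{1/2}+s_j^{-1/2})\,e^{r_j^2\pi/2}\bigr)$. This is precisely the annulus computation already carried out in Theorem \ref{thm:stabann}, so I would invoke that result directly: it chooses $z_0\in B_{r_j,s_j}(z_j)$ via Lemma \ref{lem:tlem} (using $s_{1/2}(B_{r,s}(z))\ge c(s-r)$ from Lemma \ref{lem:shape}) to control $C_{samp}$ and $C^a_{poinc}$ simultaneously, bounds $C_{poinc}$ through Theorem \ref{thm:poincann}, bounds $C_{trace}$ through Theorem \ref{thm:tracann} (producing the factor $\rho(r_j/s_j)(s_j^{1/2}+s_j^{-1/2})$), and bounds $C_{bound}$ through Theorem \ref{thm:bounddisc} (giving the $r_j^{1/2}$ factor). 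The only Gabor-specific ingredient is $\var(\eta_{z_0},B_{r_j}(z_j))$, the ratio of the maximum of $|\eta_{z_0}|$ on $\partial D_{0,j}$ to its minimum on $D_{0,j}$; since $|\eta_{z_0}(z)| = e^{\pi|z-z_0|^2/2}$ is radially increasing, this variation is controlled by $e^{r_j^2\pi/2}$ (up to choice of base point), which is exactly the exponential factor in the claimed bound.

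Finally I would sum the single-component estimates over $j=1,\dots,k$. On the right-hand side, each Sobolev norm is taken over the disjoint atoll $(D_j)_+$, so $\sum_{j=1}^k \norm{\cdot}{W^{1,2}((D_j)_+)} \le \bigl(\sum_j\norm{\cdot}{W^{1,2}((D_j)_+)}^2\bigr)^{1/2}\cdot k^{1/2}$ relates to the single norm on $\bigcup_j (D_j)_+$, or more directly one bounds each summand's norm by the norm over the union; pulling the $j$-dependent prefactors outside the common Sobolev norm over $\bigcup_{j=1}^k (D_j)_+$ gives the stated form. The step I expect to be most delicate is the verification that the annulus constant $\var(\eta,B_{r_j}(z_j))$ genuinely contributes only $e^{r_j^2\pi/2}$ and not a worse dependence on $z_j$ or $s_j$: this hinges on choosing the base point $z_0$ in $\eta_{z_0}$ to coincide (or nearly) with the lagoon center $z_j$, so that $|z-z_0|\le r_j$ throughout the lagoon and the translation-dependent cross term in $\eta_{z_0}$ drops out of the variation ratio. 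All remaining manipulations are routine bookkeeping of the constants already assembled in Theorem \ref{thm:stabann}.
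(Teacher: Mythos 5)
Your proposal is correct and follows essentially the same route as the paper: the paper's proof likewise invokes the annulus result (Theorem \ref{thm:stabann}, itself an instantiation of Theorem \ref{prop:main}) on each disc $D_j$ and observes that $\var(\eta_{z_j},B_{r_j}(z_j))\le c\, e^{r_j^2\pi/2}$, which is exactly your choice of normalization base point at the lagoon center giving $|\eta_{z_j}(z)|=e^{\pi|z-z_j|^2/2}$. Your additional remarks --- decoupling the infimum over the phases $\alpha_j$, distinguishing the normalization base point from the sampling point $z_0$ of Lemma \ref{lem:tlem}, and bounding each $W^{1,2}((D_j)_+)$ norm by the norm over the union --- are precisely the bookkeeping the paper leaves implicit.
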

\begin{proof}
	The proof follows directly from Theorem \ref{thm:stabann} together
	with observing that \\$\var(\eta_{z_j},B_{r_j}(z_j))\le c\cdot e^{r_j^2 \pi/2}$
	for a uniform constant $c>0$. 
\end{proof}
We are now ready to conclude the proof of Theorem \ref{thm:gaborapp}, as announced in 
Section \ref{sec:atoll}.
\begin{proof}[Proof of Theorem \ref{thm:gaborapp}]
	It is well-known that the Gabor transform $V_\varphi:L^2(\mathbb{R})\to L^2(\mathbb{R}^2)$ is an isometry
	on its range, see \cite{grochenig}. 
 By assumption, the functions $f_j,g_j$ are $\varepsilon_j$-concentrated in $D_j$ (see Definition \ref{eq:TFcon}). 
 Therefore, 
 $$
 	\norm{f_j - e^{\i \alpha_j}g_j}{L^2(\mathbb{R})}\le \norm{V_\varphi f - e^{\i\alpha_j}
 	V_\varphi g }{L^2(D_j)} + \varepsilon_j.
 $$
 Now, the statement of Theorem \ref{thm:gaborapp} is a direct consequence
	of Theorem \ref{thm:gaborstab}.
\end{proof}
\subsection{Phase Retrieval from Cauchy Wavelet Measurements}\label{subsec:cauchy}
\label{sec:cauchy}
For $g\in L^2(\mathbb{R})$ define the wavelet transform of $f\in L^2(\mathbb{R})$ as 
\begin{equation}\label{eq:Wav}
	W_g f(x,y) :=\frac{1}{|y|^{1/2}}\int_{\mathbb{R}}f(t)\overline{g((t-x)/y)}dt.
\end{equation}
Define the Cauchy wavelet of order $s\in \mathbb{N}$
via its Fourier transform $\widehat\psi(\omega)=\omega^s e^{-2\pi \omega}\chi_{\omega>0}(\omega)$.
The following result is well-known \cite{ascensi2009model}.
\begin{theorem}\label{thm:cauchyanal}
	For $\eta(z):= |1/y|^{s+1/2}$ and 
	any $f\in L^2(\mathbb{R})$ with $\mbox{supp }\widehat f \subset \mathbb{R}_+$,
	the function 
	$$
		F(z):=W_\psi f(x,y) \quad \mbox{where }z = x+\i y
	$$
	satisfies that $\eta \cdot F\in \mathcal{O}(\mathbb{C}_+)$, where $\mathbb{C}_+:= \{x+\i y; \ y \geq 0 \}$.
\end{theorem}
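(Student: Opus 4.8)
The plan is to move to the Fourier side, where the defining features of the Cauchy wavelet—the polynomial factor $\omega^s$, the exponential $e^{-2\pi\omega}$, and the half-line support—combine with the hypothesis $\suppp\widehat f\subset\R_+$ to produce a manifestly holomorphic integral, with the normalization $\eta$ engineered precisely to strip off the non-holomorphic prefactor. Writing $\psi_{x,y}(t):=|y|^{-1/2}\psi((t-x)/y)$ so that $W_\psi f(x,y)=\langle f,\psi_{x,y}\rangle$, I would first apply Plancherel's theorem to pass to the frequency domain. The dilation and translation rules of the Fourier transform give, for $y>0$, the identity $\widehat{\psi_{x,y}}(\omega)=y^{1/2}e^{-2\pi\i x\omega}\widehat\psi(y\omega)$; inserting $\widehat\psi(\omega)=\omega^s e^{-2\pi\omega}\chi_{\omega>0}$ (which is real and nonnegative, so the conjugation in the inner product is harmless) and using that $\suppp\widehat f\subset\R_+$ restricts the integral to $\omega>0$, one obtains, with $z=x+\i y$ and $e^{2\pi\i x\omega-2\pi y\omega}=e^{2\pi\i\omega z}$,
\[
  W_\psi f(x,y)=y^{s+1/2}\int_0^\infty \widehat f(\omega)\,\omega^s\,e^{2\pi\i\omega z}\,d\omega .
\]

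The second step is the observation that $\eta(z)=|1/y|^{s+1/2}=y^{-(s+1/2)}$ exactly cancels the prefactor $y^{s+1/2}$, so that
\[
  (\eta\cdot F)(z)=\int_0^\infty \widehat f(\omega)\,\omega^s\,e^{2\pi\i\omega z}\,d\omega=:G(z),
\]
and it remains only to show $G\in\mathcal{O}(\C_+)$. For each fixed $\omega>0$ the integrand $z\mapsto\widehat f(\omega)\,\omega^s\,e^{2\pi\i\omega z}$ is entire, and for $y>0$ one has $|e^{2\pi\i\omega z}|=e^{-2\pi\omega y}$, which decays exponentially in $\omega$. I would fix a compact $K\subset\C_+$ on which $y\ge y_0>0$ and dominate the integrand by $|\widehat f(\omega)|\,\omega^s\,e^{-2\pi\omega y_0}$; by Cauchy–Schwarz, $\widehat f\in\Ltwo$, and $\int_0^\infty\omega^{2s}e^{-4\pi\omega y_0}\,d\omega<\infty$, this dominating function is integrable. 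Holomorphy of $G$ then follows either by differentiating under the integral sign (the $\bar z$-derivative of the integrand vanishes, so $\partial_{\bar z}G=0$) or, more cleanly, by Morera's theorem: for any triangle $T\subset\C_+$, Fubini's theorem—justified by the same domination—gives $\oint_{\partial T}G(z)\,dz=\int_0^\infty\widehat f(\omega)\,\omega^s\bigl(\oint_{\partial T}e^{2\pi\i\omega z}\,dz\bigr)\,d\omega=0$.

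The only delicate point is the justification of this interchange (differentiation under the integral, or Fubini), and this is exactly where the hypothesis $\suppp\widehat f\subset\R_+$ is indispensable: it discards the negative frequencies, for which the factor $e^{-2\pi\omega y}$ would \emph{grow} rather than decay as $\omega\to-\infty$, so that both the domination and the resulting holomorphy would fail. Everything else is a routine application of standard Fourier identities together with the Cauchy–Schwarz estimate above, so I expect no further obstacles.
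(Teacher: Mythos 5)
Your proposal is correct and follows essentially the same route as the paper: both identify $y^{-(s+1/2)}W_\psi f(x,y)$ with the half-line Fourier integral $G(z)=\int_0^\infty \widehat f(\omega)\,\omega^s e^{2\pi\i\omega z}\,d\omega$ (the paper via the convolution $W_\psi f(x,y)=f\ast\psi_y(x)$ and the formula for $\widehat{\psi_y}$, you via Plancherel, which is the same computation) and then invoke holomorphy of $G$ on the upper half-plane. The only difference is that you spell out the domination/Morera argument that the paper dismisses as ``easy to check,'' which is a welcome addition rather than a deviation.
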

\begin{proof}
	For the convenience of the reader we provide a proof. 
	It is easy to check that, for $f$ with $\mbox{supp }\widehat f\subset \mathbb{R}_+$,
	the function
	$$
		G(z):= \int_{\mathbb{R}_+}\omega^s \widehat f(\omega) e^{-2\pi y\omega }e^{2\pi \i x \omega} d\omega,\quad
		\mbox{for }z=x+\i y\in \mathbb{C_+}
	$$
	is holomorphic on $\mathbb{C}_+$. In fact, it is the holomorphic extension of the $s$-th
	derivative of $f$, if the former exists.
	
	Now note that 
	$$
		W_\psi f (x,y) = f\ast \psi_{y}(x),
	$$
	where 
	$$
		\psi_{y}(t) = \frac{1}{|y|^{1/2}}\psi(-t/y).
	$$
	The Fourier transform of $\psi_y$ is given
	as 
	$$
		\widehat{\psi_y}(\omega) = |y|^{1/2}\widehat\psi (y\cdot \omega)
		= |y|^{s+1/2}\omega^s e^{-2 \pi y \omega}\chi_{\mathbb{R}_+}(\omega).
	$$
	It follows that
	$$
			G(z) =  |y|^{-s-1/2}\cdot W_\psi f (x,y)
	$$
	which proves the statement.
\end{proof}
Using Theorem \ref{prop:main}, the statement of Theorem \ref{thm:cauchyanal}
immediately implies the following result related to the stability of phase retrieval
from Cauchy wavelet measurements. 
\begin{theorem}\label{thm:cauchystab}
	Suppose that $f\in L^2(\mathbb{R})$ with $\mbox{supp }\widehat f \subset \mathbb{R}_+$.
	Suppose that $W_\psi f$ is an atoll function associated
	with $D_j:=B_{s_j}(z_j)$ and $D_{0,j}:=B_{r_j}(z_j)$ for $0\le r_j<s_j$ and $z_j=x_j+\i y_j \in \mathbb{C}_+$ for $j=1,\dots , k$, meaning that
	$$
		(W_\psi f) \Big\vert_{D_j} \in \mathcal{H}(D_j,D_{0,j},\delta_j,\Delta_j) \quad \forall j \in \{1, \dots, k \}.
	$$
	Then, for $g\in L^2(\mathbb{R})$ arbitrary with $\mbox{supp }\widehat g \subset \mathbb{R}_+$, the following estimate
	holds for a continuous function $\rho:[0,1)\to \mathbb{R}_+$ and a constant $c>0$ that are both uniform.
	\begin{align*}
		&\inf_{\alpha_1,\dots, \alpha_k\in \mathbb{R}}
			\sum_{j=1}^k \norm{W_\psi f-e^{\i \alpha_j}W_\psi g
			}{L^2(D_j)}
			\le  \\
			& \le c\cdot \left(\sum_{j=1}^k \frac{\Delta_j^2}{\delta_j^2} (1+\rho(r_j/s_j)\cdot s_j)\cdot \left(1+r_j^{1/2}\cdot \rho(r_j/s_j)\cdot (s_j^{1/2}+s_j^{-1/2})\cdot \left|\frac{1}{1-r_j/y_j}\right|^{s+1/2}\right)\right)\cdot
			\\
			& \quad \norm{|W_\psi f|-|W_\psi g|
			}{W^{1,2}\left(\bigcup_{j=1}^k (D_j)_+\right)}.
	\end{align*}
\end{theorem}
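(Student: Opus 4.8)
The proof will run exactly parallel to that of the Gabor case (Theorem \ref{thm:gaborstab}): the only transform-specific ingredient is the variance of the normalizing factor $\eta$ on each lagoon, so most of the work is a mechanical reduction to the annulus estimate of Theorem \ref{thm:stabann}. First I would invoke Theorem \ref{thm:cauchyanal} with $\eta(z) := |1/y|^{s+1/2}$ to conclude that both $\eta \cdot W_\psi f$ and $\eta \cdot W_\psi g$ are holomorphic on each disc $D_j = B_{s_j}(z_j)$; here one uses $z_j \in \mathbb{C}_+$ together with $y_j > s_j$, so that $D_j$ lies in the open upper half-plane where $\eta$ is finite and smooth. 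Setting $F_1 := (W_\psi f)|_{D_j}$ and $F_2 := (W_\psi g)|_{D_j}$, the atoll hypothesis $(W_\psi f)|_{D_j} \in \mathcal{H}(D_j, D_{0,j}, \delta_j, \Delta_j)$ together with this holomorphy places us precisely in the hypotheses of Theorem \ref{thm:stabann} on the annulus $(D_j)_+ = B_{r_j,s_j}(z_j)$ with lagoon $B_{r_j}(z_j)$. Applying Theorem \ref{thm:stabann} for each fixed $j$ then yields, with a uniform constant $c$ and a uniform continuous $\rho$, a per-component estimate of exactly the form appearing in the summand on the right-hand side, except with the generic factor $\var(\eta, B_{r_j}(z_j))$ in place of $|1/(1 - r_j/y_j)|^{s+1/2}$.

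The single genuinely new computation is the evaluation of this variance. Writing $z = x + \i y$ and $z_j = x_j + \i y_j$, the factor $\eta(z) = y^{-(s+1/2)}$ depends only on the imaginary part and is strictly decreasing in $y$. On $B_{r_j}(z_j)$ the imaginary part ranges over $(y_j - r_j, y_j + r_j)$, so the maximum of $|\eta|$ over the boundary circle is attained at $y = y_j - r_j$ and its minimum over the disc at $y = y_j + r_j$. Hence
\[
\var(\eta, B_{r_j}(z_j)) = \left(\frac{y_j + r_j}{y_j - r_j}\right)^{s+1/2} \le 2^{s+1/2}\left|\frac{1}{1 - r_j/y_j}\right|^{s+1/2},
\]
where the last step uses $y_j + r_j \le 2 y_j$ (valid since $r_j < y_j$). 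For fixed wavelet order $s$ the factor $2^{s+1/2}$ is absorbed into the uniform constant $c$; this reproduces both the exponent $s + 1/2$ and the blow-up as $r_j \to y_j$ recorded in the theorem.

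It then remains to combine the $k$ per-component bounds. Because each term $\norm{W_\psi f - e^{\i\alpha_j}W_\psi g}{L^2(D_j)}$ depends on $\alpha_j$ alone, the joint infimum on the left decouples into the sum of individual infima, so summing the per-component estimates over $j$ controls the entire left-hand side. On the right, since $(D_j)_+ \subset \bigcup_{m=1}^k (D_m)_+$ for every $j$, monotonicity of the $W^{1,2}$-norm gives $\norm{|W_\psi f| - |W_\psi g|}{W^{1,2}((D_j)_+)} \le \norm{|W_\psi f| - |W_\psi g|}{W^{1,2}(\bigcup_{m} (D_m)_+)}$, which lets me pull out the single Sobolev norm over the union and collect the $j$-dependent prefactors into the stated sum. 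The only real obstacle is the variance computation of the second paragraph, which is where the wavelet exponent $s + 1/2$ enters; everything else is a transcription of Theorems \ref{thm:cauchyanal} and \ref{thm:stabann} together with the two elementary bookkeeping steps (decoupling of the infimum and monotonicity of the norm).
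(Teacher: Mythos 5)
Your proposal is correct and takes essentially the same route as the paper's own proof, which reduces each component to the annulus estimate of Theorem \ref{thm:stabann} via the holomorphic normalization $\eta(z)=|1/y|^{s+1/2}$ from Theorem \ref{thm:cauchyanal} and bounds $\var(\eta,B_{r_j}(z_j))\le c\,|1-r_j/y_j|^{-s-1/2}$. Your explicit evaluation of the variance (monotonicity of $y^{-(s+1/2)}$ in the imaginary part, with $2^{s+1/2}$ absorbed into $c$ for fixed wavelet order) and the two bookkeeping steps (decoupling of the joint infimum and monotonicity of the Sobolev norm over $\bigcup_j (D_j)_+$) simply spell out what the paper compresses into a one-line proof.
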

\begin{proof}
	We have that $\var(\eta,B_{r_j}(z_j))\le c\cdot |1-r_j/y_j|^{-s-1/2}$ for a uniform constant $c>0$, so that the statement is a  direct consequence of Theorem \ref{thm:stabann}. 
\end{proof}
\begin{remark}\label{rem:lagoondecay}
	It is interesting to observe how the stability bounds in Theorem \ref{thm:gaborstab} and \ref{thm:cauchystab} deteriorate as the size 
	of the lagoons grows, that is, as the parameter $r_j$ grows. 
	In the case of Gabor measurements this growth
	is of order $e^{r_j^2 \pi/2}$, while in the case of Cauchy wavelets with $s$ vanishing
	moments, the growth is of order $(\frac{1}{1-r_j/y_j})^{s+1/2}$, becoming worse as the number
	of vanishing moments increases. 
	
	Interpreting these quantities in geometric terms we note
	that the area of a lagoon in the parameter space
	of the Gabor transform is of order $r_j^2 \pi$, that is, 
	the stability decays exponentially in the area of the lagoon.
	
	For the wavelet transform the natural notion of area in the upper
	half-plane
	is given by the Poincar\'e metric, i.e., by
	$$
		\mbox{area}_{\mathbb{C}_+}(B):=\int_B \frac{dx dy}{y^2}
	$$
	and a simple calculation gives
	 $$
		\mbox{area}_{\mathbb{C}_+}(B_{r_j}(z_j)) =  \int_0^{2\pi} \int_0^{r_j} \frac{1}{(y_j+\rho \sin \phi)^2} \rho \, d\rho \, d\phi= 2 \pi \left( \frac{1}{\sqrt{1-r_j^2/y_j^2}} - 1 \right),
	$$
	so that
	$$
	\frac{\pi}{\sqrt{2}} \left( \frac{1}{\sqrt{1-r_j/y_j}} -1 \right) \leq  \mbox{area}_{\mathbb{C}_+}(B_{r_j}(z_j)) \leq 2 \pi \left( \frac{1}{\sqrt{1-r_j/y_j}} -1 \right).
	$$
	
	This shows that the stability
	of the phase retrieval from Cauchy wavelet measurements decays
	only polynomially in the area of the lagoon.
	
	This behavior is most likely related to the
	fact that Gabor systems are much more well-localized in the time-frequency plane
	than Cauchy wavelets and that the localization properties of Cauchy wavelets
	increase as the number $s$ of vanishing moments increases. 
	
	It is known that strong localization properties of the measurement system 
	are an obstruction to stable phase retrieval \cite{bandeira2014saving} and in 
	light of this the stability behavior of Theorems \ref{thm:gaborstab} and \ref{thm:cauchystab} is not really surprising.
\end{remark}
\section{Proof of Theorem \ref{prop:main}} \label{sec:proof}
This section is devoted to prove Theorem \ref{prop:main} which
is the main result of this paper. 
The proof follows several steps and relies on the following key lemma.
\begin{lemma}\label{keylemma}
Suppose that $F\in\mathcal{O}(D)$, then
$$ |F'(z)| = \big| \nabla |F|(x,y) \big|\hspace{3em} \forall\, z = x+iy \in D.$$
\end{lemma}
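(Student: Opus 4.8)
The plan is to write $F = u + \i v$ as in the paper's standing notation and to verify the identity by a direct pointwise computation, using the Cauchy--Riemann equations \eqref{eq:CR} as the only structural input. Since $F'(z) = u_x + \i v_x$ gives $|F'(z)|^2 = u_x^2 + v_x^2$, and $|F| = (u^2+v^2)^{1/2}$, the whole statement reduces to comparing $u_x^2+v_x^2$ with $\big|\nabla|F|\big|^2$; I would expand the latter by the chain rule and then collapse the resulting expression using \eqref{eq:CR}.

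First I would record the two geometric consequences of \eqref{eq:CR} that do all the work. Writing $\nabla u = (u_x,u_y)$ and $\nabla v = (v_x,v_y)$, the substitutions $v_y = u_x$ and $v_x = -u_y$ give $\nabla u \cdot \nabla v = u_x v_x + u_y v_y = -u_x u_y + u_y u_x = 0$, so the two gradients are orthogonal; moreover $|\nabla u|^2 = u_x^2 + u_y^2 = u_x^2 + v_x^2 = |F'|^2$, and likewise $|\nabla v|^2 = |F'|^2$. In other words the Jacobian of $F$ is a similarity (a scaled rotation), the analytic expression of conformality, with both rows of common length $|F'(z)|$.

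Next I would apply the chain rule to $|F| = (u^2+v^2)^{1/2}$, valid wherever $F(z)\neq 0$, to obtain $\nabla|F| = (u\,\nabla u + v\,\nabla v)/|F|$. Invoking orthogonality and the common length from the previous step,
\[
\big| u\,\nabla u + v\,\nabla v \big|^2 = u^2|\nabla u|^2 + 2uv\,(\nabla u\cdot\nabla v) + v^2|\nabla v|^2 = (u^2+v^2)\,|F'|^2 = |F|^2\,|F'|^2,
\]
so that $\big|\nabla|F|(x,y)\big| = |u\,\nabla u + v\,\nabla v|/|F| = |F'(z)|$, which is the asserted equality. This is the entire substance of the lemma.

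The only delicate point, and the place I expect any friction, is the behaviour at zeros of $F$, where $|F|$ need not be differentiable: for $F(z)=z$ the function $|F|=|z|$ has a conic singularity at the origin while $|F'|\equiv 1$. Since a holomorphic $F\not\equiv 0$ has only isolated zeros, this exceptional set is discrete, hence of measure zero, and is irrelevant for the $L^p$ and Sobolev estimates in which Lemma \ref{keylemma} is later used; away from it the computation above holds pointwise, and $|F'(z)|$ is precisely the limiting value of $\big|\nabla|F|\big|$ as one approaches such a zero. I would state this caveat cleanly but otherwise expect no real obstacle, as the argument is an elementary consequence of \eqref{eq:CR} together with the chain rule.
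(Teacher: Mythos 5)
Your proposal is correct and takes essentially the same approach as the paper: a direct pointwise computation of $\nabla|F|$ via the chain rule, collapsed by the Cauchy--Riemann equations --- your packaging through the orthogonality and equal length of $\nabla u$ and $\nabla v$ is just a tidier reorganization of the identity $(uu_x+vv_x)^2+(vu_x-uv_x)^2=(u^2+v^2)(u_x^2+v_x^2)$ that the paper expands by hand. Your caveat about the zeros of $F$, where $|F|$ fails to be differentiable, is a legitimate point of care that the paper's proof silently glosses over (it divides by $|F|$ without comment), and your resolution --- zeros of a nontrivial holomorphic function are isolated, hence negligible for the $L^p$ and Sobolev norms in which the lemma is applied --- is exactly right.
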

\begin{proof}
Let $u$ and $v$ denote the real and imaginary part of $F$, respectively, i.e., $F(x,y)=u(x,y)+\i v(x,y).$ Then,
\begin{align*}
\partial_x \left|F\right| &= \partial_x(\sqrt{u^2+v^2})\\
&=\frac{1}{2}\cdot\frac{1}{\sqrt{u^2+v^2}}\cdot(2u\cdot u_x + 2v\cdot v_x)\\
&= \frac{u\cdot u_x + v\cdot v_x}{|F|}.
\end{align*}
Similarly, $$\partial_y|F| = \frac{uu_y+vv_y}{|F|} = \frac{-uv_x+vu_x}{|F|},$$ where the last equality follows from Cauchy-Riemann equations.
Therefore, 
\begin{align*}
\big|\nabla|F|\big|^2 &= (\partial_x|F|)^2 + (\partial_y|F|)^2  = \frac{(uu_x+vv_x)^2 + (-uv_x+vu_x)^2}{|F|^2}\\
&= \frac{(u^2+v^2)(u_x^2+v_x^2)}{|F|^2} = u_x^2 + v_x^2 = |F'(z)|^2.
\end{align*}
\end{proof}
Having Lemma \ref{keylemma} at hand we may now proceed to the proof of Theorem \ref{prop:main}, which we restate here for convenience of the reader.

\begin{customthm}{3.1}
	 Suppose that $F_1$ belongs to a class
	 of atoll functions as in Definition \ref{def:funcclass}, i.e., $F_1\in \mathcal{H}(D,(D_0^i)_{i=1}^l,\delta,\Delta)$. 
	 Assume further that $F_2\in C^1(D)$ such that
	 there exists a continuous function
	 $\eta: D \to \mathbb{C}$ for which
	 both functions $\eta\cdot F_1,\ \eta\cdot F_2\in \mathcal{O}(D)$. 
	 Suppose that $1\le p\le \infty$.
	 
	Pick $z_0\in D_+$. We denote $C_{samp}:=C_{samp}(p,D_+,z_0,|F_1|-|F_2|)$ meaning that
	 \begin{equation}\label{eq:z0est-proof}
	 	 \||F_1(z_0)|-|F_2(z_0)|\|_{L^p(D_+)}\le C_{samp}\cdot \||F_1|-|F_2|\|_{L^p(D_+)}.
	 \end{equation}

	 Then the following estimate holds:
	 \begin{equation}\label{eq:mainest-proof}
	 	 \inf_{\alpha \in \mathbb{R}}\norm{F_1-e^{\i\alpha}F_2}{L^p(D)}
		 \le C(z_0,p,D_+,(D_0^i)_{i=1}^l)\frac{\Delta^2}{\delta^2}\norm{|F_1|-|F_2| }{W^{1,p}(D_+)}.
	 \end{equation}
	\end{customthm} 
	We recall that for the constant $C(z_0,p,D_+,(D_0^i)_{i=1}^l)$ one may choose (with a suitably large but uniform constant $c>0$):
	 \begin{multline}\label{eq:const-proof}
	 	C(z_0,p,D_+,(D_0^i)_{i=1}^l)=
	 	 c\cdot (C^a_{poinc}(D_+)+ C_{samp}
	 	 	 	 \\ + 
	 	 	 	 \sum_{i=1}^lC_{bound}(D_0^i)\cdot \var(\eta,D_0^i)\cdot 
	 	 	 	 C_{trace}(D_+)
	 	 	 	 (C^a_{poinc}(D_+)+C_{samp})),
	 \end{multline}
	 where we have omitted the dependence of the various constants on $p,z_0$ and denote 
	 $$
	 	\var(\eta,D_0^i):=\frac{\max_{z\in \partial D_0^i}|\eta(z)|}{\min_{z\in D_0^i}|\eta(z)|},\quad i=1,\dots ,l.
	 $$

\begin{proof}[Proof of Theorem \ref{prop:main}]
Without loss of generality we let $l=1$ and put $D_0:=D_0^1$ (the general 
case being not more difficult).
We need to bound the quantity
	\begin{equation}\label{eq:firstesta}
		\norm{F_2(z)-e^{\i\alpha}F_1(z)}{L^p(D)}\le 
		\norm{F_2(z)-e^{\i\alpha}F_1(z)}{L^p(D_+)}+\norm{F_2(z)-e^{\i\alpha}F_1(z)}{L^p(D_0)}
	\end{equation}
	for suitable $\alpha \in \mathbb{R}$
	and we will develop separate arguments for the two terms on the RHS of the above.
	
{\bf Step 1. } As a first step we start by developing a basic estimate.
	Consider 
	$$
		F:=F_2/F_1.
	$$
	By assumption we have that $\eta \cdot F_1, \eta \cdot F_2 \in  \mathcal{O}(D)$ and $|F_1(z)| \geq \delta$ for $z \in D_+.$ Consequently, $F\in \mathcal{O}(D_+)$.
	Pick $\alpha \in \mathbb{R}$ such that
	\begin{equation}\label{alphapick}
		|F_2(z_0)-e^{\i\alpha}F_1(z_0)|=||F_2(z_0)|-|F_1(z_0)||.
	\end{equation}
	Now consider for $z\in D$ arbitrary
	\begin{eqnarray}
		|F_2(z)-e^{\i\alpha}F_1(z)| & = & |F_1(z)||F(z)-e^{\i\alpha}|\nonumber \\
		&\le & |F_1(z)|\left(|F(z)-F(z_0)|+|F(z_0)-e^{\i\alpha}|\right) \nonumber \\
		&=&|F_1(z)|\left(|F(z)-F(z_0)|+\frac{1}{|F_1(z_0)|}|F_2(z_0)-e^{\i\alpha}F_1(z_0)|\right) \nonumber
		\\
		&=&|F_1(z)|\left(|F(z)-F(z_0)|+\frac{1}{|F_1(z_0)|}||F_2(z_0)|-|F_1(z_0)||\right) \nonumber
		\\
		&\le &
		\Delta \left(|F(z)-F(z_0)|+\frac{1}{\delta}||F_2(z_0)|-|F_1(z_0)||\right)\label{eq:basicest}
	\end{eqnarray}
	{\bf Step 2. }
 	In this step we focus on the second term of (\ref{eq:firstesta})
 	and show that it can actually be absorbed by an estimate on $D_+$. 
	By the analyticity of $\eta \cdot F_1$ and $\eta \cdot F_2$ on $D$, we can apply Theorem \ref{thm:boundvals} to obtain
	$$
		\|\eta \cdot (F_2(z)-e^{\i\alpha}F_1(z))\|_{L^p(D_0)}
		\le C_{bound}(p,D_0)
		\|\eta \cdot (F_2(z)-e^{\i\alpha}F_1(z))\|_{L^p(\partial D_0)}
	$$
	and therefore we get 
	$$
		\norm{F_2(z)-e^{\i\alpha}F_1(z)}{L^p(D_0)}
		\le C_{bound}(p,D_0)\cdot \var(\eta,D_0)\cdot
		\norm{F_2(z)-e^{\i\alpha}F_1(z)}{L^p(\partial D_0)}.
	$$
	We may now estimate further, using (\ref{eq:basicest}), that
	\begin{multline*}
			\norm{F_2(z)-e^{\i\alpha}F_1(z)}{L^p(D_0)}
		\le  
		C_{bound}(p,D_0)\cdot \var(\eta,D_0)
		\cdot \\ \left(\Delta \norm{F(z)-F(z_0)}{L^p(\partial D_+)}
		+\frac{\Delta}{\delta}\norm{|F_1(z_0)| - |F_2(z_0)| }{L^p(\partial D_+)}\right)
		.
	\end{multline*}
	Applying the Trace theorem (Theorem \ref{thm:trace})
	we further get that
	\begin{multline*}
			\norm{F_2(z)-e^{\i\alpha}F_1(z)}{L^p(D_0)}
		\le  
		C_{bound}(p,D_0)\cdot \var(\eta,D_0)\cdot C_{trace}(p,D_+)
		\cdot \\ \left(\Delta \norm{F(z)-F(z_0)}{W^{1,p}( D_+)}
		+\frac{\Delta}{\delta}\norm{|F_1(z_0)| - |F_2(z_0)| }{L^p( D_+)}\right),
	\end{multline*}
	where we have used that $\||F_1(z_0)|-|F_2(z_0)| \|_{W^{1,p}(D_+)} = \||F_1(z_0)|-|F_2(z_0)| \|_{L^p(D_+)}$ because the function is constant.
	With the assumption in (\ref{eq:z0est-proof})
	we further get
	\begin{multline*}
			\norm{F_2(z)-e^{\i\alpha}F_1(z)}{L^p(D_0)}
		\le  
			C_{bound}(p,D_0)\cdot \var(\eta,D_0)\cdot C_{trace}(p,D_+)
		\cdot \\ \left(\Delta \norm{F(z)-F(z_0)}{W^{1,p}( D_+)}
		+\frac{\Delta}{\delta}C_{samp}\norm{|F_1| - |F_2| }{L^p( D_+)}\right)
		.
			\end{multline*}
			Lastly
			we apply the analytic Poincar\'e inequality (\ref{eq:Poincare}) and obtain
			the estimate
	\begin{multline}\label{eq:innerouter}
			\norm{F_2(z)-e^{\i\alpha}F_1(z)}{L^p(D_0)}
		\le  
			C_{bound}(p,D_0)\cdot \var(\eta,D_0)\cdot C_{trace}(p,D_+)
		\cdot \\ \left(\Delta(C^a_{poinc}(p,D_+,z_0)) \norm{F'}{L^p( D_+)}
		+\frac{\Delta}{\delta}C_{samp}\norm{|F_1| - |F_2| }{L^p( D_+)}\right)
		.
			\end{multline}

	{\bf Step 3. }
	In this step we focus on an estimate for the first term in 
	(\ref{eq:firstesta}).	
	Using (\ref{eq:basicest})  
	we see that
	$$
		\norm{F_2(z)-e^{\i\alpha}F_1(z)}{L^p(D_+)}\le  
		\Delta \norm{F(z) - F(z_0)}{L^p(D_+)}
		+\frac{\Delta}{\delta} 
		C_{samp}\norm{|F_1| - |F_2| }{L^p( D_+)}.
	$$	
	Yet another application of the analytic Poincar\'e inequality yields
	\begin{multline}\label{eq:outer}
		\norm{F_2(z)-e^{\i\alpha}F_1(z)}{L^p(D_+)}\le 
		\\ 
		\Delta C^a_{poinc}(p,D_+,z_0)\norm{F'}{L^p(D_+)}
		+\frac{\Delta}{\delta} 
		C_{samp}\norm{|F_1| - |F_2| }{L^p( D_+)}.
	\end{multline}
	{\bf Step 4. }In equations (\ref{eq:innerouter}) and (\ref{eq:outer})
	we now have achieved estimates of both terms in (\ref{eq:firstesta}).
	A close look at these estimates reveals that we only need to get 
	a bound on $\norm{F' }{L^p(D_+)}$ in terms of 
	$\norm{|F_1|-|F_2|}{W^{1,p}(D)}$ to finish the proof. This is where
	our key lemma, Lemma \ref{keylemma} comes into play, stating that
	$$
		\norm{F' }{L^p(D_+)}= \norm{\nabla |F| }{L^p(D_+)}.
	$$
	It thus remains to achieve a bound for $\norm{\nabla |F|}{L^p(D_+)}$.
	To this end we consider
	\begin{eqnarray*}
		\frac{\partial}{\partial x}|F| &=&
		\frac{|F_1| \frac{\partial}{\partial x}|F_2|  - |F_2|\frac{\partial}{\partial x}|F_1|}{|F_1|^2} \\
		&=&
		\frac{\frac{\partial}{\partial x}|F_1| (|F_1| - |F_2|)
		+ |F_1|(\frac{\partial}{\partial x}|F_2|-\frac{\partial}{\partial x}|F_1|)}{|F_1|^2},
	\end{eqnarray*}
	and hence,
	$$
		\left|\frac{\partial}{\partial x}|F|\right| \le \frac{\Delta}{\delta^2}\left(||F_1| - |F_2||+
		|\frac{\partial}{\partial x}|F_2|-\frac{\partial}{\partial x}|F_1||\right),
	$$
	valid uniformly on $D_+$.
	A similar estimate holds for $	\left|\frac{\partial}{\partial y}|F|\right|$ and thus there exists a universal constant $c>0$
	with 
	\begin{equation}\label{eq:laststep}
		\norm{F'}{L^p(D_+)}\le c\cdot \frac{\Delta}{\delta^2}\norm{|F_1|-|F_2|}{W^{1,p}(D_+)}.
	\end{equation}
	{\bf Step 5. }We finish the proof by substituting the estimate
	(\ref{eq:laststep}) into equations (\ref{eq:innerouter}) and (\ref{eq:outer}) (and noting that $\frac{\Delta}{\delta}\geq 1$), then use Lemma \ref{lem:tlem} to remove the dependency on $z_0$, which gives the desired result.
\end{proof}

\bibliographystyle{abbrv}
\bibliography{Deep_Learning.bib}

\end{document}